\renewcommand*\env@matrix[1][c]{\hskip -\arraycolsep
  \let\@ifnextchar\new@ifnextchar
  \array{*\c@MaxMatrixCols #1}}
\definecolor{AliceBlue}{rgb}{0.94,0.97,1.00}
\definecolor{AntiqueWhite1}{rgb}{1.00,0.94,0.86}
\definecolor{AntiqueWhite2}{rgb}{0.93,0.87,0.80}
\definecolor{AntiqueWhite3}{rgb}{0.80,0.75,0.69}
\definecolor{AntiqueWhite4}{rgb}{0.55,0.51,0.47}
\definecolor{AntiqueWhite}{rgb}{0.98,0.92,0.84}
\definecolor{BlanchedAlmond}{rgb}{1.00,0.92,0.80}
\definecolor{BlueViolet}{rgb}{0.54,0.17,0.89}
\definecolor{CadetBlue1}{rgb}{0.60,0.96,1.00}
\definecolor{CadetBlue2}{rgb}{0.56,0.90,0.93}
\definecolor{CadetBlue3}{rgb}{0.48,0.77,0.80}
\definecolor{CadetBlue4}{rgb}{0.33,0.53,0.55}
\definecolor{CadetBlue}{rgb}{0.37,0.62,0.63}
\definecolor{CornflowerBlue}{rgb}{0.39,0.58,0.93}
\definecolor{DarkBlue}{rgb}{0.00,0.00,0.55}
\definecolor{DarkCyan}{rgb}{0.00,0.55,0.55}
\definecolor{DarkGoldenrod1}{rgb}{1.00,0.73,0.06}
\definecolor{DarkGoldenrod2}{rgb}{0.93,0.68,0.05}
\definecolor{DarkGoldenrod3}{rgb}{0.80,0.58,0.05}
\definecolor{DarkGoldenrod4}{rgb}{0.55,0.40,0.03}
\definecolor{DarkGoldenrod}{rgb}{0.72,0.53,0.04}
\definecolor{DarkGray}{rgb}{0.66,0.66,0.66}
\definecolor{DarkGreen}{rgb}{0.00,0.39,0.00}
\definecolor{DarkGrey}{rgb}{0.66,0.66,0.66}
\definecolor{DarkKhaki}{rgb}{0.74,0.72,0.42}
\definecolor{DarkMagenta}{rgb}{0.55,0.00,0.55}
\definecolor{DarkOliveGreen1}{rgb}{0.79,1.00,0.44}
\definecolor{DarkOliveGreen2}{rgb}{0.74,0.93,0.41}
\definecolor{DarkOliveGreen3}{rgb}{0.64,0.80,0.35}
\definecolor{DarkOliveGreen4}{rgb}{0.43,0.55,0.24}
\definecolor{DarkOliveGreen}{rgb}{0.33,0.42,0.18}
\definecolor{DarkOrange1}{rgb}{1.00,0.50,0.00}
\definecolor{DarkOrange2}{rgb}{0.93,0.46,0.00}
\definecolor{DarkOrange3}{rgb}{0.80,0.40,0.00}
\definecolor{DarkOrange4}{rgb}{0.55,0.27,0.00}
\definecolor{DarkOrange}{rgb}{1.00,0.55,0.00}
\definecolor{DarkOrchid1}{rgb}{0.75,0.24,1.00}
\definecolor{DarkOrchid2}{rgb}{0.70,0.23,0.93}
\definecolor{DarkOrchid3}{rgb}{0.60,0.20,0.80}
\definecolor{DarkOrchid4}{rgb}{0.41,0.13,0.55}
\definecolor{DarkOrchid}{rgb}{0.60,0.20,0.80}
\definecolor{DarkRed}{rgb}{0.55,0.00,0.00}
\definecolor{DarkSalmon}{rgb}{0.91,0.59,0.48}
\definecolor{DarkSeaGreen1}{rgb}{0.76,1.00,0.76}
\definecolor{DarkSeaGreen2}{rgb}{0.71,0.93,0.71}
\definecolor{DarkSeaGreen3}{rgb}{0.61,0.80,0.61}
\definecolor{DarkSeaGreen4}{rgb}{0.41,0.55,0.41}
\definecolor{DarkSeaGreen}{rgb}{0.56,0.74,0.56}
\definecolor{DarkSlateBlue}{rgb}{0.28,0.24,0.55}
\definecolor{DarkSlateGray1}{rgb}{0.59,1.00,1.00}
\definecolor{DarkSlateGray2}{rgb}{0.55,0.93,0.93}
\definecolor{DarkSlateGray3}{rgb}{0.47,0.80,0.80}
\definecolor{DarkSlateGray4}{rgb}{0.32,0.55,0.55}
\definecolor{DarkSlateGray}{rgb}{0.18,0.31,0.31}
\definecolor{DarkSlateGrey}{rgb}{0.18,0.31,0.31}
\definecolor{DarkTurquoise}{rgb}{0.00,0.81,0.82}
\definecolor{DarkViolet}{rgb}{0.58,0.00,0.83}
\definecolor{DeepPink1}{rgb}{1.00,0.08,0.58}
\definecolor{DeepPink2}{rgb}{0.93,0.07,0.54}
\definecolor{DeepPink3}{rgb}{0.80,0.06,0.46}
\definecolor{DeepPink4}{rgb}{0.55,0.04,0.31}
\definecolor{DeepPink}{rgb}{1.00,0.08,0.58}
\definecolor{DeepSkyBlue1}{rgb}{0.00,0.75,1.00}
\definecolor{DeepSkyBlue2}{rgb}{0.00,0.70,0.93}
\definecolor{DeepSkyBlue3}{rgb}{0.00,0.60,0.80}
\definecolor{DeepSkyBlue4}{rgb}{0.00,0.41,0.55}
\definecolor{DeepSkyBlue}{rgb}{0.00,0.75,1.00}
\definecolor{DimGray}{rgb}{0.41,0.41,0.41}
\definecolor{DimGrey}{rgb}{0.41,0.41,0.41}
\definecolor{DodgerBlue1}{rgb}{0.12,0.56,1.00}
\definecolor{DodgerBlue2}{rgb}{0.11,0.53,0.93}
\definecolor{DodgerBlue3}{rgb}{0.09,0.45,0.80}
\definecolor{DodgerBlue4}{rgb}{0.06,0.31,0.55}
\definecolor{DodgerBlue}{rgb}{0.12,0.56,1.00}
\definecolor{FloralWhite}{rgb}{1.00,0.98,0.94}
\definecolor{ForestGreen}{rgb}{0.13,0.55,0.13}
\definecolor{GhostWhite}{rgb}{0.97,0.97,1.00}
\definecolor{GreenYellow}{rgb}{0.68,1.00,0.18}
\definecolor{HotPink1}{rgb}{1.00,0.43,0.71}
\definecolor{HotPink2}{rgb}{0.93,0.42,0.65}
\definecolor{HotPink3}{rgb}{0.80,0.38,0.56}
\definecolor{HotPink4}{rgb}{0.55,0.23,0.38}
\definecolor{HotPink}{rgb}{1.00,0.41,0.71}
\definecolor{IndianRed1}{rgb}{1.00,0.42,0.42}
\definecolor{IndianRed2}{rgb}{0.93,0.39,0.39}
\definecolor{IndianRed3}{rgb}{0.80,0.33,0.33}
\definecolor{IndianRed4}{rgb}{0.55,0.23,0.23}
\definecolor{IndianRed}{rgb}{0.80,0.36,0.36}
\definecolor{LavenderBlush1}{rgb}{1.00,0.94,0.96}
\definecolor{LavenderBlush2}{rgb}{0.93,0.88,0.90}
\definecolor{LavenderBlush3}{rgb}{0.80,0.76,0.77}
\definecolor{LavenderBlush4}{rgb}{0.55,0.51,0.53}
\definecolor{LavenderBlush}{rgb}{1.00,0.94,0.96}
\definecolor{LawnGreen}{rgb}{0.49,0.99,0.00}
\definecolor{LemonChiffon1}{rgb}{1.00,0.98,0.80}
\definecolor{LemonChiffon2}{rgb}{0.93,0.91,0.75}
\definecolor{LemonChiffon3}{rgb}{0.80,0.79,0.65}
\definecolor{LemonChiffon4}{rgb}{0.55,0.54,0.44}
\definecolor{LemonChiffon}{rgb}{1.00,0.98,0.80}
\definecolor{LightBlue1}{rgb}{0.75,0.94,1.00}
\definecolor{LightBlue2}{rgb}{0.70,0.87,0.93}
\definecolor{LightBlue3}{rgb}{0.60,0.75,0.80}
\definecolor{LightBlue4}{rgb}{0.41,0.51,0.55}
\definecolor{LightBlue}{rgb}{0.68,0.85,0.90}
\definecolor{LightCoral}{rgb}{0.94,0.50,0.50}
\definecolor{LightCyan1}{rgb}{0.88,1.00,1.00}
\definecolor{LightCyan2}{rgb}{0.82,0.93,0.93}
\definecolor{LightCyan3}{rgb}{0.71,0.80,0.80}
\definecolor{LightCyan4}{rgb}{0.48,0.55,0.55}
\definecolor{LightCyan}{rgb}{0.88,1.00,1.00}
\definecolor{LightGoldenrod1}{rgb}{1.00,0.93,0.55}
\definecolor{LightGoldenrod2}{rgb}{0.93,0.86,0.51}
\definecolor{LightGoldenrod3}{rgb}{0.80,0.75,0.44}
\definecolor{LightGoldenrod4}{rgb}{0.55,0.51,0.30}
\definecolor{LightGoldenrodYellow}{rgb}{0.98,0.98,0.82}
\definecolor{LightGoldenrod}{rgb}{0.93,0.87,0.51}
\definecolor{LightGray}{rgb}{0.83,0.83,0.83}
\definecolor{LightGreen}{rgb}{0.56,0.93,0.56}
\definecolor{LightGrey}{rgb}{0.83,0.83,0.83}
\definecolor{LightPink1}{rgb}{1.00,0.68,0.73}
\definecolor{LightPink2}{rgb}{0.93,0.64,0.68}
\definecolor{LightPink3}{rgb}{0.80,0.55,0.58}
\definecolor{LightPink4}{rgb}{0.55,0.37,0.40}
\definecolor{LightPink}{rgb}{1.00,0.71,0.76}
\definecolor{LightSalmon1}{rgb}{1.00,0.63,0.48}
\definecolor{LightSalmon2}{rgb}{0.93,0.58,0.45}
\definecolor{LightSalmon3}{rgb}{0.80,0.51,0.38}
\definecolor{LightSalmon4}{rgb}{0.55,0.34,0.26}
\definecolor{LightSalmon}{rgb}{1.00,0.63,0.48}
\definecolor{LightSeaGreen}{rgb}{0.13,0.70,0.67}
\definecolor{LightSkyBlue1}{rgb}{0.69,0.89,1.00}
\definecolor{LightSkyBlue2}{rgb}{0.64,0.83,0.93}
\definecolor{LightSkyBlue3}{rgb}{0.55,0.71,0.80}
\definecolor{LightSkyBlue4}{rgb}{0.38,0.48,0.55}
\definecolor{LightSkyBlue}{rgb}{0.53,0.81,0.98}
\definecolor{LightSlateBlue}{rgb}{0.52,0.44,1.00}
\definecolor{LightSlateGray}{rgb}{0.47,0.53,0.60}
\definecolor{LightSlateGrey}{rgb}{0.47,0.53,0.60}
\definecolor{LightSteelBlue1}{rgb}{0.79,0.88,1.00}
\definecolor{LightSteelBlue2}{rgb}{0.74,0.82,0.93}
\definecolor{LightSteelBlue3}{rgb}{0.64,0.71,0.80}
\definecolor{LightSteelBlue4}{rgb}{0.43,0.48,0.55}
\definecolor{LightSteelBlue}{rgb}{0.69,0.77,0.87}
\definecolor{LightYellow1}{rgb}{1.00,1.00,0.88}
\definecolor{LightYellow2}{rgb}{0.93,0.93,0.82}
\definecolor{LightYellow3}{rgb}{0.80,0.80,0.71}
\definecolor{LightYellow4}{rgb}{0.55,0.55,0.48}
\definecolor{LightYellow}{rgb}{1.00,1.00,0.88}
\definecolor{LimeGreen}{rgb}{0.20,0.80,0.20}
\definecolor{MediumAquamarine}{rgb}{0.40,0.80,0.67}
\definecolor{MediumBlue}{rgb}{0.00,0.00,0.80}
\definecolor{MediumOrchid1}{rgb}{0.88,0.40,1.00}
\definecolor{MediumOrchid2}{rgb}{0.82,0.37,0.93}
\definecolor{MediumOrchid3}{rgb}{0.71,0.32,0.80}
\definecolor{MediumOrchid4}{rgb}{0.48,0.22,0.55}
\definecolor{MediumOrchid}{rgb}{0.73,0.33,0.83}
\definecolor{MediumPurple1}{rgb}{0.67,0.51,1.00}
\definecolor{MediumPurple2}{rgb}{0.62,0.47,0.93}
\definecolor{MediumPurple3}{rgb}{0.54,0.41,0.80}
\definecolor{MediumPurple4}{rgb}{0.36,0.28,0.55}
\definecolor{MediumPurple}{rgb}{0.58,0.44,0.86}
\definecolor{MediumSeaGreen}{rgb}{0.24,0.70,0.44}
\definecolor{MediumSlateBlue}{rgb}{0.48,0.41,0.93}
\definecolor{MediumSpringGreen}{rgb}{0.00,0.98,0.60}
\definecolor{MediumTurquoise}{rgb}{0.28,0.82,0.80}
\definecolor{MediumVioletRed}{rgb}{0.78,0.08,0.52}
\definecolor{MidnightBlue}{rgb}{0.10,0.10,0.44}
\definecolor{MintCream}{rgb}{0.96,1.00,0.98}
\definecolor{MistyRose1}{rgb}{1.00,0.89,0.88}
\definecolor{MistyRose2}{rgb}{0.93,0.84,0.82}
\definecolor{MistyRose3}{rgb}{0.80,0.72,0.71}
\definecolor{MistyRose4}{rgb}{0.55,0.49,0.48}
\definecolor{MistyRose}{rgb}{1.00,0.89,0.88}
\definecolor{NavajoWhite1}{rgb}{1.00,0.87,0.68}
\definecolor{NavajoWhite2}{rgb}{0.93,0.81,0.63}
\definecolor{NavajoWhite3}{rgb}{0.80,0.70,0.55}
\definecolor{NavajoWhite4}{rgb}{0.55,0.47,0.37}
\definecolor{NavajoWhite}{rgb}{1.00,0.87,0.68}
\definecolor{NavyBlue}{rgb}{0.00,0.00,0.50}
\definecolor{OldLace}{rgb}{0.99,0.96,0.90}
\definecolor{OliveDrab1}{rgb}{0.75,1.00,0.24}
\definecolor{OliveDrab2}{rgb}{0.70,0.93,0.23}
\definecolor{OliveDrab3}{rgb}{0.60,0.80,0.20}
\definecolor{OliveDrab4}{rgb}{0.41,0.55,0.13}
\definecolor{OliveDrab}{rgb}{0.42,0.56,0.14}
\definecolor{OrangeRed1}{rgb}{1.00,0.27,0.00}
\definecolor{OrangeRed2}{rgb}{0.93,0.25,0.00}
\definecolor{OrangeRed3}{rgb}{0.80,0.22,0.00}
\definecolor{OrangeRed4}{rgb}{0.55,0.15,0.00}
\definecolor{OrangeRed}{rgb}{1.00,0.27,0.00}
\definecolor{PaleGoldenrod}{rgb}{0.93,0.91,0.67}
\definecolor{PaleGreen1}{rgb}{0.60,1.00,0.60}
\definecolor{PaleGreen2}{rgb}{0.56,0.93,0.56}
\definecolor{PaleGreen3}{rgb}{0.49,0.80,0.49}
\definecolor{PaleGreen4}{rgb}{0.33,0.55,0.33}
\definecolor{PaleGreen}{rgb}{0.60,0.98,0.60}
\definecolor{PaleTurquoise1}{rgb}{0.73,1.00,1.00}
\definecolor{PaleTurquoise2}{rgb}{0.68,0.93,0.93}
\definecolor{PaleTurquoise3}{rgb}{0.59,0.80,0.80}
\definecolor{PaleTurquoise4}{rgb}{0.40,0.55,0.55}
\definecolor{PaleTurquoise}{rgb}{0.69,0.93,0.93}
\definecolor{PaleVioletRed1}{rgb}{1.00,0.51,0.67}
\definecolor{PaleVioletRed2}{rgb}{0.93,0.47,0.62}
\definecolor{PaleVioletRed3}{rgb}{0.80,0.41,0.54}
\definecolor{PaleVioletRed4}{rgb}{0.55,0.28,0.36}
\definecolor{PaleVioletRed}{rgb}{0.86,0.44,0.58}
\definecolor{PapayaWhip}{rgb}{1.00,0.94,0.84}
\definecolor{PeachPuff1}{rgb}{1.00,0.85,0.73}
\definecolor{PeachPuff2}{rgb}{0.93,0.80,0.68}
\definecolor{PeachPuff3}{rgb}{0.80,0.69,0.58}
\definecolor{PeachPuff4}{rgb}{0.55,0.47,0.40}
\definecolor{PeachPuff}{rgb}{1.00,0.85,0.73}
\definecolor{PowderBlue}{rgb}{0.69,0.88,0.90}
\definecolor{RosyBrown1}{rgb}{1.00,0.76,0.76}
\definecolor{RosyBrown2}{rgb}{0.93,0.71,0.71}
\definecolor{RosyBrown3}{rgb}{0.80,0.61,0.61}
\definecolor{RosyBrown4}{rgb}{0.55,0.41,0.41}
\definecolor{RosyBrown}{rgb}{0.74,0.56,0.56}
\definecolor{RoyalBlue1}{rgb}{0.28,0.46,1.00}
\definecolor{RoyalBlue2}{rgb}{0.26,0.43,0.93}
\definecolor{RoyalBlue3}{rgb}{0.23,0.37,0.80}
\definecolor{RoyalBlue4}{rgb}{0.15,0.25,0.55}
\definecolor{RoyalBlue}{rgb}{0.25,0.41,0.88}
\definecolor{SaddleBrown}{rgb}{0.55,0.27,0.07}
\definecolor{SandyBrown}{rgb}{0.96,0.64,0.38}
\definecolor{SeaGreen1}{rgb}{0.33,1.00,0.62}
\definecolor{SeaGreen2}{rgb}{0.31,0.93,0.58}
\definecolor{SeaGreen3}{rgb}{0.26,0.80,0.50}
\definecolor{SeaGreen4}{rgb}{0.18,0.55,0.34}
\definecolor{SeaGreen}{rgb}{0.18,0.55,0.34}
\definecolor{SkyBlue1}{rgb}{0.53,0.81,1.00}
\definecolor{SkyBlue2}{rgb}{0.49,0.75,0.93}
\definecolor{SkyBlue3}{rgb}{0.42,0.65,0.80}
\definecolor{SkyBlue4}{rgb}{0.29,0.44,0.55}
\definecolor{SkyBlue}{rgb}{0.53,0.81,0.92}
\definecolor{SlateBlue1}{rgb}{0.51,0.44,1.00}
\definecolor{SlateBlue2}{rgb}{0.48,0.40,0.93}
\definecolor{SlateBlue3}{rgb}{0.41,0.35,0.80}
\definecolor{SlateBlue4}{rgb}{0.28,0.24,0.55}
\definecolor{SlateBlue}{rgb}{0.42,0.35,0.80}
\definecolor{SlateGray1}{rgb}{0.78,0.89,1.00}
\definecolor{SlateGray2}{rgb}{0.73,0.83,0.93}
\definecolor{SlateGray3}{rgb}{0.62,0.71,0.80}
\definecolor{SlateGray4}{rgb}{0.42,0.48,0.55}
\definecolor{SlateGray}{rgb}{0.44,0.50,0.56}
\definecolor{SlateGrey}{rgb}{0.44,0.50,0.56}
\definecolor{SpringGreen1}{rgb}{0.00,1.00,0.50}
\definecolor{SpringGreen2}{rgb}{0.00,0.93,0.46}
\definecolor{SpringGreen3}{rgb}{0.00,0.80,0.40}
\definecolor{SpringGreen4}{rgb}{0.00,0.55,0.27}
\definecolor{SpringGreen}{rgb}{0.00,1.00,0.50}
\definecolor{SteelBlue1}{rgb}{0.39,0.72,1.00}
\definecolor{SteelBlue2}{rgb}{0.36,0.67,0.93}
\definecolor{SteelBlue3}{rgb}{0.31,0.58,0.80}
\definecolor{SteelBlue4}{rgb}{0.21,0.39,0.55}
\definecolor{SteelBlue}{rgb}{0.27,0.51,0.71}
\definecolor{VioletRed1}{rgb}{1.00,0.24,0.59}
\definecolor{VioletRed2}{rgb}{0.93,0.23,0.55}
\definecolor{VioletRed3}{rgb}{0.80,0.20,0.47}
\definecolor{VioletRed4}{rgb}{0.55,0.13,0.32}
\definecolor{VioletRed}{rgb}{0.82,0.13,0.56}
\definecolor{WhiteSmoke}{rgb}{0.96,0.96,0.96}
\definecolor{YellowGreen}{rgb}{0.60,0.80,0.20}
\definecolor{aliceblue}{rgb}{0.94,0.97,1.00}
\definecolor{antiquewhite}{rgb}{0.98,0.92,0.84}
\definecolor{aquamarine1}{rgb}{0.50,1.00,0.83}
\definecolor{aquamarine2}{rgb}{0.46,0.93,0.78}
\definecolor{aquamarine3}{rgb}{0.40,0.80,0.67}
\definecolor{aquamarine4}{rgb}{0.27,0.55,0.45}
\definecolor{aquamarine}{rgb}{0.50,1.00,0.83}
\definecolor{azure1}{rgb}{0.94,1.00,1.00}
\definecolor{azure2}{rgb}{0.88,0.93,0.93}
\definecolor{azure3}{rgb}{0.76,0.80,0.80}
\definecolor{azure4}{rgb}{0.51,0.55,0.55}
\definecolor{azure}{rgb}{0.94,1.00,1.00}
\definecolor{beige}{rgb}{0.96,0.96,0.86}
\definecolor{bisque1}{rgb}{1.00,0.89,0.77}
\definecolor{bisque2}{rgb}{0.93,0.84,0.72}
\definecolor{bisque3}{rgb}{0.80,0.72,0.62}
\definecolor{bisque4}{rgb}{0.55,0.49,0.42}
\definecolor{bisque}{rgb}{1.00,0.89,0.77}
\definecolor{black}{rgb}{0.00,0.00,0.00}
\definecolor{blanchedalmond}{rgb}{1.00,0.92,0.80}
\definecolor{blue1}{rgb}{0.00,0.00,1.00}
\definecolor{blue2}{rgb}{0.00,0.00,0.93}
\definecolor{blue3}{rgb}{0.00,0.00,0.80}
\definecolor{blue4}{rgb}{0.00,0.00,0.55}
\definecolor{blueviolet}{rgb}{0.54,0.17,0.89}
\definecolor{blue}{rgb}{0.00,0.00,1.00}
\definecolor{brown1}{rgb}{1.00,0.25,0.25}
\definecolor{brown2}{rgb}{0.93,0.23,0.23}
\definecolor{brown3}{rgb}{0.80,0.20,0.20}
\definecolor{brown4}{rgb}{0.55,0.14,0.14}
\definecolor{brown}{rgb}{0.65,0.16,0.16}
\definecolor{burlywood1}{rgb}{1.00,0.83,0.61}
\definecolor{burlywood2}{rgb}{0.93,0.77,0.57}
\definecolor{burlywood3}{rgb}{0.80,0.67,0.49}
\definecolor{burlywood4}{rgb}{0.55,0.45,0.33}
\definecolor{burlywood}{rgb}{0.87,0.72,0.53}
\definecolor{cadetblue}{rgb}{0.37,0.62,0.63}
\definecolor{chartreuse1}{rgb}{0.50,1.00,0.00}
\definecolor{chartreuse2}{rgb}{0.46,0.93,0.00}
\definecolor{chartreuse3}{rgb}{0.40,0.80,0.00}
\definecolor{chartreuse4}{rgb}{0.27,0.55,0.00}
\definecolor{chartreuse}{rgb}{0.50,1.00,0.00}
\definecolor{chocolate1}{rgb}{1.00,0.50,0.14}
\definecolor{chocolate2}{rgb}{0.93,0.46,0.13}
\definecolor{chocolate3}{rgb}{0.80,0.40,0.11}
\definecolor{chocolate4}{rgb}{0.55,0.27,0.07}
\definecolor{chocolate}{rgb}{0.82,0.41,0.12}
\definecolor{coral1}{rgb}{1.00,0.45,0.34}
\definecolor{coral2}{rgb}{0.93,0.42,0.31}
\definecolor{coral3}{rgb}{0.80,0.36,0.27}
\definecolor{coral4}{rgb}{0.55,0.24,0.18}
\definecolor{coral}{rgb}{1.00,0.50,0.31}
\definecolor{cornflowerblue}{rgb}{0.39,0.58,0.93}
\definecolor{cornsilk1}{rgb}{1.00,0.97,0.86}
\definecolor{cornsilk2}{rgb}{0.93,0.91,0.80}
\definecolor{cornsilk3}{rgb}{0.80,0.78,0.69}
\definecolor{cornsilk4}{rgb}{0.55,0.53,0.47}
\definecolor{cornsilk}{rgb}{1.00,0.97,0.86}
\definecolor{cyan1}{rgb}{0.00,1.00,1.00}
\definecolor{cyan2}{rgb}{0.00,0.93,0.93}
\definecolor{cyan3}{rgb}{0.00,0.80,0.80}
\definecolor{cyan4}{rgb}{0.00,0.55,0.55}
\definecolor{cyan}{rgb}{0.00,1.00,1.00}
\definecolor{darkblue}{rgb}{0.00,0.00,0.55}
\definecolor{darkcyan}{rgb}{0.00,0.55,0.55}
\definecolor{darkgoldenrod}{rgb}{0.72,0.53,0.04}
\definecolor{darkgray}{rgb}{0.66,0.66,0.66}
\definecolor{darkgreen}{rgb}{0.00,0.39,0.00}
\definecolor{darkgrey}{rgb}{0.66,0.66,0.66}
\definecolor{darkkhaki}{rgb}{0.74,0.72,0.42}
\definecolor{darkmagenta}{rgb}{0.55,0.00,0.55}
\definecolor{darkolive}{rgb}{0.33,0.42,0.18}
\definecolor{darkorange}{rgb}{1.00,0.55,0.00}
\definecolor{darkorchid}{rgb}{0.60,0.20,0.80}
\definecolor{darkred}{rgb}{0.55,0.00,0.00}
\definecolor{darksalmon}{rgb}{0.91,0.59,0.48}
\definecolor{darksea}{rgb}{0.56,0.74,0.56}
\definecolor{darkslate}{rgb}{0.18,0.31,0.31}
\definecolor{darkslate}{rgb}{0.18,0.31,0.31}
\definecolor{darkslate}{rgb}{0.28,0.24,0.55}
\definecolor{darkturquoise}{rgb}{0.00,0.81,0.82}
\definecolor{darkviolet}{rgb}{0.58,0.00,0.83}
\definecolor{deeppink}{rgb}{1.00,0.08,0.58}
\definecolor{deepsky}{rgb}{0.00,0.75,1.00}
\definecolor{dimgray}{rgb}{0.41,0.41,0.41}
\definecolor{dimgrey}{rgb}{0.41,0.41,0.41}
\definecolor{dodgerblue}{rgb}{0.12,0.56,1.00}
\definecolor{firebrick1}{rgb}{1.00,0.19,0.19}
\definecolor{firebrick2}{rgb}{0.93,0.17,0.17}
\definecolor{firebrick3}{rgb}{0.80,0.15,0.15}
\definecolor{firebrick4}{rgb}{0.55,0.10,0.10}
\definecolor{firebrick}{rgb}{0.70,0.13,0.13}
\definecolor{floralwhite}{rgb}{1.00,0.98,0.94}
\definecolor{forestgreen}{rgb}{0.13,0.55,0.13}
\definecolor{gainsboro}{rgb}{0.86,0.86,0.86}
\definecolor{ghostwhite}{rgb}{0.97,0.97,1.00}
\definecolor{gold1}{rgb}{1.00,0.84,0.00}
\definecolor{gold2}{rgb}{0.93,0.79,0.00}
\definecolor{gold3}{rgb}{0.80,0.68,0.00}
\definecolor{gold4}{rgb}{0.55,0.46,0.00}
\definecolor{goldenrod1}{rgb}{1.00,0.76,0.15}
\definecolor{goldenrod2}{rgb}{0.93,0.71,0.13}
\definecolor{goldenrod3}{rgb}{0.80,0.61,0.11}
\definecolor{goldenrod4}{rgb}{0.55,0.41,0.08}
\definecolor{goldenrod}{rgb}{0.85,0.65,0.13}
\definecolor{gold}{rgb}{1.00,0.84,0.00}
\definecolor{gray0}{rgb}{0.00,0.00,0.00}
\definecolor{gray100}{rgb}{1.00,1.00,1.00}
\definecolor{gray10}{rgb}{0.10,0.10,0.10}
\definecolor{gray11}{rgb}{0.11,0.11,0.11}
\definecolor{gray12}{rgb}{0.12,0.12,0.12}
\definecolor{gray13}{rgb}{0.13,0.13,0.13}
\definecolor{gray14}{rgb}{0.14,0.14,0.14}
\definecolor{gray15}{rgb}{0.15,0.15,0.15}
\definecolor{gray16}{rgb}{0.16,0.16,0.16}
\definecolor{gray17}{rgb}{0.17,0.17,0.17}
\definecolor{gray18}{rgb}{0.18,0.18,0.18}
\definecolor{gray19}{rgb}{0.19,0.19,0.19}
\definecolor{gray1}{rgb}{0.01,0.01,0.01}
\definecolor{gray20}{rgb}{0.20,0.20,0.20}
\definecolor{gray21}{rgb}{0.21,0.21,0.21}
\definecolor{gray22}{rgb}{0.22,0.22,0.22}
\definecolor{gray23}{rgb}{0.23,0.23,0.23}
\definecolor{gray24}{rgb}{0.24,0.24,0.24}
\definecolor{gray25}{rgb}{0.25,0.25,0.25}
\definecolor{gray26}{rgb}{0.26,0.26,0.26}
\definecolor{gray27}{rgb}{0.27,0.27,0.27}
\definecolor{gray28}{rgb}{0.28,0.28,0.28}
\definecolor{gray29}{rgb}{0.29,0.29,0.29}
\definecolor{gray2}{rgb}{0.02,0.02,0.02}
\definecolor{gray30}{rgb}{0.30,0.30,0.30}
\definecolor{gray31}{rgb}{0.31,0.31,0.31}
\definecolor{gray32}{rgb}{0.32,0.32,0.32}
\definecolor{gray33}{rgb}{0.33,0.33,0.33}
\definecolor{gray34}{rgb}{0.34,0.34,0.34}
\definecolor{gray35}{rgb}{0.35,0.35,0.35}
\definecolor{gray36}{rgb}{0.36,0.36,0.36}
\definecolor{gray37}{rgb}{0.37,0.37,0.37}
\definecolor{gray38}{rgb}{0.38,0.38,0.38}
\definecolor{gray39}{rgb}{0.39,0.39,0.39}
\definecolor{gray3}{rgb}{0.03,0.03,0.03}
\definecolor{gray40}{rgb}{0.40,0.40,0.40}
\definecolor{gray41}{rgb}{0.41,0.41,0.41}
\definecolor{gray42}{rgb}{0.42,0.42,0.42}
\definecolor{gray43}{rgb}{0.43,0.43,0.43}
\definecolor{gray44}{rgb}{0.44,0.44,0.44}
\definecolor{gray45}{rgb}{0.45,0.45,0.45}
\definecolor{gray46}{rgb}{0.46,0.46,0.46}
\definecolor{gray47}{rgb}{0.47,0.47,0.47}
\definecolor{gray48}{rgb}{0.48,0.48,0.48}
\definecolor{gray49}{rgb}{0.49,0.49,0.49}
\definecolor{gray4}{rgb}{0.04,0.04,0.04}
\definecolor{gray50}{rgb}{0.50,0.50,0.50}
\definecolor{gray51}{rgb}{0.51,0.51,0.51}
\definecolor{gray52}{rgb}{0.52,0.52,0.52}
\definecolor{gray53}{rgb}{0.53,0.53,0.53}
\definecolor{gray54}{rgb}{0.54,0.54,0.54}
\definecolor{gray55}{rgb}{0.55,0.55,0.55}
\definecolor{gray56}{rgb}{0.56,0.56,0.56}
\definecolor{gray57}{rgb}{0.57,0.57,0.57}
\definecolor{gray58}{rgb}{0.58,0.58,0.58}
\definecolor{gray59}{rgb}{0.59,0.59,0.59}
\definecolor{gray5}{rgb}{0.05,0.05,0.05}
\definecolor{gray60}{rgb}{0.60,0.60,0.60}
\definecolor{gray61}{rgb}{0.61,0.61,0.61}
\definecolor{gray62}{rgb}{0.62,0.62,0.62}
\definecolor{gray63}{rgb}{0.63,0.63,0.63}
\definecolor{gray64}{rgb}{0.64,0.64,0.64}
\definecolor{gray65}{rgb}{0.65,0.65,0.65}
\definecolor{gray66}{rgb}{0.66,0.66,0.66}
\definecolor{gray67}{rgb}{0.67,0.67,0.67}
\definecolor{gray68}{rgb}{0.68,0.68,0.68}
\definecolor{gray69}{rgb}{0.69,0.69,0.69}
\definecolor{gray6}{rgb}{0.06,0.06,0.06}
\definecolor{gray70}{rgb}{0.70,0.70,0.70}
\definecolor{gray71}{rgb}{0.71,0.71,0.71}
\definecolor{gray72}{rgb}{0.72,0.72,0.72}
\definecolor{gray73}{rgb}{0.73,0.73,0.73}
\definecolor{gray74}{rgb}{0.74,0.74,0.74}
\definecolor{gray75}{rgb}{0.75,0.75,0.75}
\definecolor{gray76}{rgb}{0.76,0.76,0.76}
\definecolor{gray77}{rgb}{0.77,0.77,0.77}
\definecolor{gray78}{rgb}{0.78,0.78,0.78}
\definecolor{gray79}{rgb}{0.79,0.79,0.79}
\definecolor{gray7}{rgb}{0.07,0.07,0.07}
\definecolor{gray80}{rgb}{0.80,0.80,0.80}
\definecolor{gray81}{rgb}{0.81,0.81,0.81}
\definecolor{gray82}{rgb}{0.82,0.82,0.82}
\definecolor{gray83}{rgb}{0.83,0.83,0.83}
\definecolor{gray84}{rgb}{0.84,0.84,0.84}
\definecolor{gray85}{rgb}{0.85,0.85,0.85}
\definecolor{gray86}{rgb}{0.86,0.86,0.86}
\definecolor{gray87}{rgb}{0.87,0.87,0.87}
\definecolor{gray88}{rgb}{0.88,0.88,0.88}
\definecolor{gray89}{rgb}{0.89,0.89,0.89}
\definecolor{gray8}{rgb}{0.08,0.08,0.08}
\definecolor{gray90}{rgb}{0.90,0.90,0.90}
\definecolor{gray91}{rgb}{0.91,0.91,0.91}
\definecolor{gray92}{rgb}{0.92,0.92,0.92}
\definecolor{gray93}{rgb}{0.93,0.93,0.93}
\definecolor{gray94}{rgb}{0.94,0.94,0.94}
\definecolor{gray95}{rgb}{0.95,0.95,0.95}
\definecolor{gray96}{rgb}{0.96,0.96,0.96}
\definecolor{gray97}{rgb}{0.97,0.97,0.97}
\definecolor{gray98}{rgb}{0.98,0.98,0.98}
\definecolor{gray99}{rgb}{0.99,0.99,0.99}
\definecolor{gray9}{rgb}{0.09,0.09,0.09}
\definecolor{gray}{rgb}{0.75,0.75,0.75}
\definecolor{green1}{rgb}{0.00,1.00,0.00}
\definecolor{green2}{rgb}{0.00,0.93,0.00}
\definecolor{green3}{rgb}{0.00,0.80,0.00}
\definecolor{green4}{rgb}{0.00,0.55,0.00}
\definecolor{greenyellow}{rgb}{0.68,1.00,0.18}
\definecolor{green}{rgb}{0.00,1.00,0.00}
\definecolor{grey0}{rgb}{0.00,0.00,0.00}
\definecolor{grey100}{rgb}{1.00,1.00,1.00}
\definecolor{grey10}{rgb}{0.10,0.10,0.10}
\definecolor{grey11}{rgb}{0.11,0.11,0.11}
\definecolor{grey12}{rgb}{0.12,0.12,0.12}
\definecolor{grey13}{rgb}{0.13,0.13,0.13}
\definecolor{grey14}{rgb}{0.14,0.14,0.14}
\definecolor{grey15}{rgb}{0.15,0.15,0.15}
\definecolor{grey16}{rgb}{0.16,0.16,0.16}
\definecolor{grey17}{rgb}{0.17,0.17,0.17}
\definecolor{grey18}{rgb}{0.18,0.18,0.18}
\definecolor{grey19}{rgb}{0.19,0.19,0.19}
\definecolor{grey1}{rgb}{0.01,0.01,0.01}
\definecolor{grey20}{rgb}{0.20,0.20,0.20}
\definecolor{grey21}{rgb}{0.21,0.21,0.21}
\definecolor{grey22}{rgb}{0.22,0.22,0.22}
\definecolor{grey23}{rgb}{0.23,0.23,0.23}
\definecolor{grey24}{rgb}{0.24,0.24,0.24}
\definecolor{grey25}{rgb}{0.25,0.25,0.25}
\definecolor{grey26}{rgb}{0.26,0.26,0.26}
\definecolor{grey27}{rgb}{0.27,0.27,0.27}
\definecolor{grey28}{rgb}{0.28,0.28,0.28}
\definecolor{grey29}{rgb}{0.29,0.29,0.29}
\definecolor{grey2}{rgb}{0.02,0.02,0.02}
\definecolor{grey30}{rgb}{0.30,0.30,0.30}
\definecolor{grey31}{rgb}{0.31,0.31,0.31}
\definecolor{grey32}{rgb}{0.32,0.32,0.32}
\definecolor{grey33}{rgb}{0.33,0.33,0.33}
\definecolor{grey34}{rgb}{0.34,0.34,0.34}
\definecolor{grey35}{rgb}{0.35,0.35,0.35}
\definecolor{grey36}{rgb}{0.36,0.36,0.36}
\definecolor{grey37}{rgb}{0.37,0.37,0.37}
\definecolor{grey38}{rgb}{0.38,0.38,0.38}
\definecolor{grey39}{rgb}{0.39,0.39,0.39}
\definecolor{grey3}{rgb}{0.03,0.03,0.03}
\definecolor{grey40}{rgb}{0.40,0.40,0.40}
\definecolor{grey41}{rgb}{0.41,0.41,0.41}
\definecolor{grey42}{rgb}{0.42,0.42,0.42}
\definecolor{grey43}{rgb}{0.43,0.43,0.43}
\definecolor{grey44}{rgb}{0.44,0.44,0.44}
\definecolor{grey45}{rgb}{0.45,0.45,0.45}
\definecolor{grey46}{rgb}{0.46,0.46,0.46}
\definecolor{grey47}{rgb}{0.47,0.47,0.47}
\definecolor{grey48}{rgb}{0.48,0.48,0.48}
\definecolor{grey49}{rgb}{0.49,0.49,0.49}
\definecolor{grey4}{rgb}{0.04,0.04,0.04}
\definecolor{grey50}{rgb}{0.50,0.50,0.50}
\definecolor{grey51}{rgb}{0.51,0.51,0.51}
\definecolor{grey52}{rgb}{0.52,0.52,0.52}
\definecolor{grey53}{rgb}{0.53,0.53,0.53}
\definecolor{grey54}{rgb}{0.54,0.54,0.54}
\definecolor{grey55}{rgb}{0.55,0.55,0.55}
\definecolor{grey56}{rgb}{0.56,0.56,0.56}
\definecolor{grey57}{rgb}{0.57,0.57,0.57}
\definecolor{grey58}{rgb}{0.58,0.58,0.58}
\definecolor{grey59}{rgb}{0.59,0.59,0.59}
\definecolor{grey5}{rgb}{0.05,0.05,0.05}
\definecolor{grey60}{rgb}{0.60,0.60,0.60}
\definecolor{grey61}{rgb}{0.61,0.61,0.61}
\definecolor{grey62}{rgb}{0.62,0.62,0.62}
\definecolor{grey63}{rgb}{0.63,0.63,0.63}
\definecolor{grey64}{rgb}{0.64,0.64,0.64}
\definecolor{grey65}{rgb}{0.65,0.65,0.65}
\definecolor{grey66}{rgb}{0.66,0.66,0.66}
\definecolor{grey67}{rgb}{0.67,0.67,0.67}
\definecolor{grey68}{rgb}{0.68,0.68,0.68}
\definecolor{grey69}{rgb}{0.69,0.69,0.69}
\definecolor{grey6}{rgb}{0.06,0.06,0.06}
\definecolor{grey70}{rgb}{0.70,0.70,0.70}
\definecolor{grey71}{rgb}{0.71,0.71,0.71}
\definecolor{grey72}{rgb}{0.72,0.72,0.72}
\definecolor{grey73}{rgb}{0.73,0.73,0.73}
\definecolor{grey74}{rgb}{0.74,0.74,0.74}
\definecolor{grey75}{rgb}{0.75,0.75,0.75}
\definecolor{grey76}{rgb}{0.76,0.76,0.76}
\definecolor{grey77}{rgb}{0.77,0.77,0.77}
\definecolor{grey78}{rgb}{0.78,0.78,0.78}
\definecolor{grey79}{rgb}{0.79,0.79,0.79}
\definecolor{grey7}{rgb}{0.07,0.07,0.07}
\definecolor{grey80}{rgb}{0.80,0.80,0.80}
\definecolor{grey81}{rgb}{0.81,0.81,0.81}
\definecolor{grey82}{rgb}{0.82,0.82,0.82}
\definecolor{grey83}{rgb}{0.83,0.83,0.83}
\definecolor{grey84}{rgb}{0.84,0.84,0.84}
\definecolor{grey85}{rgb}{0.85,0.85,0.85}
\definecolor{grey86}{rgb}{0.86,0.86,0.86}
\definecolor{grey87}{rgb}{0.87,0.87,0.87}
\definecolor{grey88}{rgb}{0.88,0.88,0.88}
\definecolor{grey89}{rgb}{0.89,0.89,0.89}
\definecolor{grey8}{rgb}{0.08,0.08,0.08}
\definecolor{grey90}{rgb}{0.90,0.90,0.90}
\definecolor{grey91}{rgb}{0.91,0.91,0.91}
\definecolor{grey92}{rgb}{0.92,0.92,0.92}
\definecolor{grey93}{rgb}{0.93,0.93,0.93}
\definecolor{grey94}{rgb}{0.94,0.94,0.94}
\definecolor{grey95}{rgb}{0.95,0.95,0.95}
\definecolor{grey96}{rgb}{0.96,0.96,0.96}
\definecolor{grey97}{rgb}{0.97,0.97,0.97}
\definecolor{grey98}{rgb}{0.98,0.98,0.98}
\definecolor{grey99}{rgb}{0.99,0.99,0.99}
\definecolor{grey9}{rgb}{0.09,0.09,0.09}
\definecolor{grey}{rgb}{0.75,0.75,0.75}
\definecolor{honeydew1}{rgb}{0.94,1.00,0.94}
\definecolor{honeydew2}{rgb}{0.88,0.93,0.88}
\definecolor{honeydew3}{rgb}{0.76,0.80,0.76}
\definecolor{honeydew4}{rgb}{0.51,0.55,0.51}
\definecolor{honeydew}{rgb}{0.94,1.00,0.94}
\definecolor{hotpink}{rgb}{1.00,0.41,0.71}
\definecolor{indianred}{rgb}{0.80,0.36,0.36}
\definecolor{ivory1}{rgb}{1.00,1.00,0.94}
\definecolor{ivory2}{rgb}{0.93,0.93,0.88}
\definecolor{ivory3}{rgb}{0.80,0.80,0.76}
\definecolor{ivory4}{rgb}{0.55,0.55,0.51}
\definecolor{ivory}{rgb}{1.00,1.00,0.94}
\definecolor{khaki1}{rgb}{1.00,0.96,0.56}
\definecolor{khaki2}{rgb}{0.93,0.90,0.52}
\definecolor{khaki3}{rgb}{0.80,0.78,0.45}
\definecolor{khaki4}{rgb}{0.55,0.53,0.31}
\definecolor{khaki}{rgb}{0.94,0.90,0.55}
\definecolor{lavenderblush}{rgb}{1.00,0.94,0.96}
\definecolor{lavender}{rgb}{0.90,0.90,0.98}
\definecolor{lawngreen}{rgb}{0.49,0.99,0.00}
\definecolor{lemonchiffon}{rgb}{1.00,0.98,0.80}
\definecolor{lightblue}{rgb}{0.68,0.85,0.90}
\definecolor{lightcoral}{rgb}{0.94,0.50,0.50}
\definecolor{lightcyan}{rgb}{0.88,1.00,1.00}
\definecolor{lightgoldenrod}{rgb}{0.93,0.87,0.51}
\definecolor{lightgoldenrod}{rgb}{0.98,0.98,0.82}
\definecolor{lightgray}{rgb}{0.83,0.83,0.83}
\definecolor{lightgreen}{rgb}{0.56,0.93,0.56}
\definecolor{lightgrey}{rgb}{0.83,0.83,0.83}
\definecolor{lightpink}{rgb}{1.00,0.71,0.76}
\definecolor{lightsalmon}{rgb}{1.00,0.63,0.48}
\definecolor{lightsea}{rgb}{0.13,0.70,0.67}
\definecolor{lightsky}{rgb}{0.53,0.81,0.98}
\definecolor{lightslate}{rgb}{0.47,0.53,0.60}
\definecolor{lightslate}{rgb}{0.47,0.53,0.60}
\definecolor{lightslate}{rgb}{0.52,0.44,1.00}
\definecolor{lightsteel}{rgb}{0.69,0.77,0.87}
\definecolor{lightyellow}{rgb}{1.00,1.00,0.88}
\definecolor{limegreen}{rgb}{0.20,0.80,0.20}
\definecolor{linen}{rgb}{0.98,0.94,0.90}
\definecolor{magenta1}{rgb}{1.00,0.00,1.00}
\definecolor{magenta2}{rgb}{0.93,0.00,0.93}
\definecolor{magenta3}{rgb}{0.80,0.00,0.80}
\definecolor{magenta4}{rgb}{0.55,0.00,0.55}
\definecolor{magenta}{rgb}{1.00,0.00,1.00}
\definecolor{maroon1}{rgb}{1.00,0.20,0.70}
\definecolor{maroon2}{rgb}{0.93,0.19,0.65}
\definecolor{maroon3}{rgb}{0.80,0.16,0.56}
\definecolor{maroon4}{rgb}{0.55,0.11,0.38}
\definecolor{maroon}{rgb}{0.69,0.19,0.38}
\definecolor{mediumaquamarine}{rgb}{0.40,0.80,0.67}
\definecolor{mediumblue}{rgb}{0.00,0.00,0.80}
\definecolor{mediumorchid}{rgb}{0.73,0.33,0.83}
\definecolor{mediumpurple}{rgb}{0.58,0.44,0.86}
\definecolor{mediumsea}{rgb}{0.24,0.70,0.44}
\definecolor{mediumslate}{rgb}{0.48,0.41,0.93}
\definecolor{mediumspring}{rgb}{0.00,0.98,0.60}
\definecolor{mediumturquoise}{rgb}{0.28,0.82,0.80}
\definecolor{mediumviolet}{rgb}{0.78,0.08,0.52}
\definecolor{midnightblue}{rgb}{0.10,0.10,0.44}
\definecolor{mintcream}{rgb}{0.96,1.00,0.98}
\definecolor{mistyrose}{rgb}{1.00,0.89,0.88}
\definecolor{moccasin}{rgb}{1.00,0.89,0.71}
\definecolor{navajowhite}{rgb}{1.00,0.87,0.68}
\definecolor{navyblue}{rgb}{0.00,0.00,0.50}
\definecolor{navy}{rgb}{0.00,0.00,0.50}
\definecolor{oldlace}{rgb}{0.99,0.96,0.90}
\definecolor{olivedrab}{rgb}{0.42,0.56,0.14}
\definecolor{orange1}{rgb}{1.00,0.65,0.00}
\definecolor{orange2}{rgb}{0.93,0.60,0.00}
\definecolor{orange3}{rgb}{0.80,0.52,0.00}
\definecolor{orange4}{rgb}{0.55,0.35,0.00}
\definecolor{orangered}{rgb}{1.00,0.27,0.00}
\definecolor{orange}{rgb}{1.00,0.65,0.00}
\definecolor{orchid1}{rgb}{1.00,0.51,0.98}
\definecolor{orchid2}{rgb}{0.93,0.48,0.91}
\definecolor{orchid3}{rgb}{0.80,0.41,0.79}
\definecolor{orchid4}{rgb}{0.55,0.28,0.54}
\definecolor{orchid}{rgb}{0.85,0.44,0.84}
\definecolor{palegoldenrod}{rgb}{0.93,0.91,0.67}
\definecolor{palegreen}{rgb}{0.60,0.98,0.60}
\definecolor{paleturquoise}{rgb}{0.69,0.93,0.93}
\definecolor{paleviolet}{rgb}{0.86,0.44,0.58}
\definecolor{papayawhip}{rgb}{1.00,0.94,0.84}
\definecolor{peachpuff}{rgb}{1.00,0.85,0.73}
\definecolor{peru}{rgb}{0.80,0.52,0.25}
\definecolor{pink1}{rgb}{1.00,0.71,0.77}
\definecolor{pink2}{rgb}{0.93,0.66,0.72}
\definecolor{pink3}{rgb}{0.80,0.57,0.62}
\definecolor{pink4}{rgb}{0.55,0.39,0.42}
\definecolor{pink}{rgb}{1.00,0.75,0.80}
\definecolor{plum1}{rgb}{1.00,0.73,1.00}
\definecolor{plum2}{rgb}{0.93,0.68,0.93}
\definecolor{plum3}{rgb}{0.80,0.59,0.80}
\definecolor{plum4}{rgb}{0.55,0.40,0.55}
\definecolor{plum}{rgb}{0.87,0.63,0.87}
\definecolor{powderblue}{rgb}{0.69,0.88,0.90}
\definecolor{purple1}{rgb}{0.61,0.19,1.00}
\definecolor{purple2}{rgb}{0.57,0.17,0.93}
\definecolor{purple3}{rgb}{0.49,0.15,0.80}
\definecolor{purple4}{rgb}{0.33,0.10,0.55}
\definecolor{purple}{rgb}{0.63,0.13,0.94}
\definecolor{red1}{rgb}{1.00,0.00,0.00}
\definecolor{red2}{rgb}{0.93,0.00,0.00}
\definecolor{red3}{rgb}{0.80,0.00,0.00}
\definecolor{red4}{rgb}{0.55,0.00,0.00}
\definecolor{red}{rgb}{1.00,0.00,0.00}
\definecolor{rosybrown}{rgb}{0.74,0.56,0.56}
\definecolor{royalblue}{rgb}{0.25,0.41,0.88}
\definecolor{saddlebrown}{rgb}{0.55,0.27,0.07}
\definecolor{salmon1}{rgb}{1.00,0.55,0.41}
\definecolor{salmon2}{rgb}{0.93,0.51,0.38}
\definecolor{salmon3}{rgb}{0.80,0.44,0.33}
\definecolor{salmon4}{rgb}{0.55,0.30,0.22}
\definecolor{salmon}{rgb}{0.98,0.50,0.45}
\definecolor{sandybrown}{rgb}{0.96,0.64,0.38}
\definecolor{seagreen}{rgb}{0.18,0.55,0.34}
\definecolor{seashell1}{rgb}{1.00,0.96,0.93}
\definecolor{seashell2}{rgb}{0.93,0.90,0.87}
\definecolor{seashell3}{rgb}{0.80,0.77,0.75}
\definecolor{seashell4}{rgb}{0.55,0.53,0.51}
\definecolor{seashell}{rgb}{1.00,0.96,0.93}
\definecolor{sienna1}{rgb}{1.00,0.51,0.28}
\definecolor{sienna2}{rgb}{0.93,0.47,0.26}
\definecolor{sienna3}{rgb}{0.80,0.41,0.22}
\definecolor{sienna4}{rgb}{0.55,0.28,0.15}
\definecolor{sienna}{rgb}{0.63,0.32,0.18}
\definecolor{skyblue}{rgb}{0.53,0.81,0.92}
\definecolor{slateblue}{rgb}{0.42,0.35,0.80}
\definecolor{slategray}{rgb}{0.44,0.50,0.56}
\definecolor{slategrey}{rgb}{0.44,0.50,0.56}
\definecolor{snow1}{rgb}{1.00,0.98,0.98}
\definecolor{snow2}{rgb}{0.93,0.91,0.91}
\definecolor{snow3}{rgb}{0.80,0.79,0.79}
\definecolor{snow4}{rgb}{0.55,0.54,0.54}
\definecolor{snow}{rgb}{1.00,0.98,0.98}
\definecolor{springgreen}{rgb}{0.00,1.00,0.50}
\definecolor{steelblue}{rgb}{0.27,0.51,0.71}
\definecolor{tan1}{rgb}{1.00,0.65,0.31}
\definecolor{tan2}{rgb}{0.93,0.60,0.29}
\definecolor{tan3}{rgb}{0.80,0.52,0.25}
\definecolor{tan4}{rgb}{0.55,0.35,0.17}
\definecolor{tan}{rgb}{0.82,0.71,0.55}
\definecolor{thistle1}{rgb}{1.00,0.88,1.00}
\definecolor{thistle2}{rgb}{0.93,0.82,0.93}
\definecolor{thistle3}{rgb}{0.80,0.71,0.80}
\definecolor{thistle4}{rgb}{0.55,0.48,0.55}
\definecolor{thistle}{rgb}{0.85,0.75,0.85}
\definecolor{tomato1}{rgb}{1.00,0.39,0.28}
\definecolor{tomato2}{rgb}{0.93,0.36,0.26}
\definecolor{tomato3}{rgb}{0.80,0.31,0.22}
\definecolor{tomato4}{rgb}{0.55,0.21,0.15}
\definecolor{tomato}{rgb}{1.00,0.39,0.28}
\definecolor{turquoise1}{rgb}{0.00,0.96,1.00}
\definecolor{turquoise2}{rgb}{0.00,0.90,0.93}
\definecolor{turquoise3}{rgb}{0.00,0.77,0.80}
\definecolor{turquoise4}{rgb}{0.00,0.53,0.55}
\definecolor{turquoise}{rgb}{0.25,0.88,0.82}
\definecolor{violetred}{rgb}{0.82,0.13,0.56}
\definecolor{violet}{rgb}{0.93,0.51,0.93}
\definecolor{wheat1}{rgb}{1.00,0.91,0.73}
\definecolor{wheat2}{rgb}{0.93,0.85,0.68}
\definecolor{wheat3}{rgb}{0.80,0.73,0.59}
\definecolor{wheat4}{rgb}{0.55,0.49,0.40}
\definecolor{wheat}{rgb}{0.96,0.87,0.70}
\definecolor{whitesmoke}{rgb}{0.96,0.96,0.96}
\definecolor{white}{rgb}{1.00,1.00,1.00}
\definecolor{yellow1}{rgb}{1.00,1.00,0.00}
\definecolor{yellow2}{rgb}{0.93,0.93,0.00}
\definecolor{yellow3}{rgb}{0.80,0.80,0.00}
\definecolor{yellow4}{rgb}{0.55,0.55,0.00}
\definecolor{yellowgreen}{rgb}{0.60,0.80,0.20}
\definecolor{yellow}{rgb}{1.00,1.00,0.00}
\newtheorem{thm}{Theorem}[section]
\newtheorem{cor}[thm]{Corollary}
\newtheorem{lem}[thm]{Lemma}
\newtheorem{exm}[thm]{Example}
\newtheorem{prop}[thm]{Proposition}
\newtheorem{rem}[thm]{Remark}
\newtheorem{defn}[thm]{Definition}
\numberwithin{equation}{section}
\newcommand{\lxr}{\longrightarrow}
\def\drawell#1#2#3#4{
\draw[draw=black,fill=gray,fill opacity=#4,line width=1pt]
(#1,0) circle [x radius=#2, y radius=#3];}
\begin{document}


\title[]{Gorenstein silting modules and Gorenstein projective modules}

\author[]{Nan Gao$^{*}$, Jing Ma, Chi-Heng Zhang\\
\ \ \ \\
\textit{D\MakeLowercase{edicated to} P\MakeLowercase{u} Z\MakeLowercase{hang on the occasion of his 60th birthday}}
}
\address{Department of Mathematics, Shanghai University, Shanghai 200444, PR China}
\thanks{2020 Mathematics Subject Classification. 18G80, 16S90, 16G10, 16E10.}
\thanks{The first author is the corresponding author.}
\email{nangao@shu.edu.cn, majingmmmm@shu.edu.cn, zchmath@shu.edu.cn}
\thanks{Supported by the National Natural Science Foundation of China (Grant No. 11771272 and 11871326).}

\date{\today}

\keywords{Gorenstein silting modules, Silting modules, Relative rigid modules, 2-term Gorenstein silting complexes, 2-term silting complexes, Gorenstein dimension}

\subjclass[2010]{}

\begin{abstract}\ (Partial) Gorenstein silting modules are introduced and investigated. It is shown that for finite dimensional algebras of finite CM-type, partial Gorenstein silting modules are in bijection with $\tau_{G}$-rigid modules; Gorenstein silting modules are the module-theoretic counterpart of 2-term Gorenstein silting complexes; and the relation between 2-term Gorenstein silting complexes, t-structures and torsion pairs in module categories. Furthermore, the corresponding version of the classical Brenner-Butler theorem in this setting are characterised; and the upper bound of the global dimension of endomorphism algebras of 2-term Gorenstein silting complexes over an algebra $A$ are also characterised by terms of the Gorenstein global dimension of $A$.
\end{abstract}

\maketitle

\vskip 10pt

\section{\bf Introduction and Preliminaries}

\vskip 5pt

Silting complexes were first introduced by Keller and Vossieck \cite{KV} to study $t\mbox{-}$structures in the bounded derived category of representations of Dynkin quivers. This kind of complexes is a generalization of tilting complexes and tilting modules. Later, 2-term silting complexes arouse the interest of many mathematicians. For examples, the connection with torsion pairs and support $\tau\mbox{-}$tilting modules
are given(\cite{HKM, MM, AIR}); the counterpart of the classical tilting theorem and the global dimension of its endomorphism algebras are characterised(\cite{BZ1, BZ2}).

\vskip 10pt

Silting modules introduced by Angeleri H\"{u}gel-Marks-Vit\'{o}ria \cite{AMV1} over an arbitrary ring who are intended to generalize tilting modules in a similar fashion as 2-term silting complexes generalize 2-term tilting complexes and also, coincide with support $\tau\mbox{-}$tilting modules for finite dimensional algebras. Adachi-Iyama-Reiten \cite{AIR} showed how 2-term silting complexes relate with silting modules, t-structures, and co-t-structures. Silting theory has been developed systematically, see for examples(\cite{MS, AMV2, A, LZ}).

\vskip 10pt

The main idea of Gorenstein homological algebra is to replace projective modules by Gorenstein-projective modules. These modules were introduced by Enochs and Jenda \cite{EJ1} as a generalization of finitely generated modules of G-dimension zero over a two-sided Noetherian ring, in the sense of Auslander and Bridger \cite{AB}. The subject has been developed to an advanced level, see for example \cite{ABu,AR,Hap,EJ2,Ch,AM,Hol,B1,CFH,BR,J, Chen, GZ, GK, RZ1, RZ2, RZ3, AS1, G1, YLO, Z, CW}.

\vskip 10pt

Beligiannis \cite{B2} introduced and studied the algebras of finite Cohen-Macaulay type (resp. finite CM-type for simply), which correspond to algebras of finite representation type in Gorenstein homological algebra. For this class of algebras, Gao \cite{G2} introduced the relative transpose ${\rm Tr}_{G}$ in terms of Gorenstein-projective modules and the corresponding Auslander-Reiten formula.

\vskip 10pt

Based on these work, we draw a picture of all relevant modules as follows:

\vskip 10pt

\begin{tikzpicture}[font=\rmfamily,scale=1]
\drawell{0}{4}{3}{0};
\node at (-2,1){\parbox[c]{8em}{{\tiny partial \ silting }\\
{\tiny=$\tau\mbox{-}$rigid \ module}}};

\draw[draw=black,line width=1pt,fill=gray,fill opacity=.5]
(1.45,0) circle [x radius=2.6, y radius=2.4];
\node at (1,1) {\parbox[c]{8em}{{\tiny silting \ module=support}
\vskip 1pt
{\tiny $\tau\mbox{-}$tilting \ module}}};

\draw[draw=black,line width=1pt,fill=white,fill opacity=.5]
(5,-1.5) circle [x radius=4, y radius=3];
\node at (5.2,-4.15) {\parbox[c]{8em}{{\tiny $\tau_{G}\mbox{-}$rigid \ module=?}}};

\draw[draw=black,line width=1pt,fill=gray,fill opacity=.5]
(4.1,-1.1) circle [x radius=2.8, y radius=2.33];
\node at (4.3,-2.7) {\parbox[c]{8em}{{\tiny Gproj +?}}};

\draw[draw=black,line width=1pt,fill=blue,fill opacity=.5]
(3.6,-0.5) circle [x radius=1.5, y radius=1];
\node at (3.7,-0.5) {\parbox[c]{8em}{{\tiny Gproj +supp}}};
\node at (5.5,-1) {\parbox[c]{8em}{{\tiny Gproj}}};
\end{tikzpicture}

\vskip 10pt

More precisely, there are the following natural questions:

\vskip 5pt

{\bf Question A:}\  Which class of modules coincides with the above $\tau_{G}$-rigid module?

\vskip 10pt

{\bf Question B:}\ What's the correspondence in the bounded homotopy category with respect to the above modules?

\vskip 10pt

The answers are given in the paper. We organize the paper as follows. In Section 2, we introduce the Gorenstein silting module and show that it has a bijective relationship with the relative rigid module over an algebra of finite CM-type. We also show the relations among  Gorenstein tilting modules, Gorenstein star modules  and Gorenstein silting modules. In Section 3, we  show that the Gorenstein silting module is the module-theoretic counterpart of the 2-term Gorenstein silting complex. We also characterise it by the connection with the t-structure and torsion pair, and show the corresponding Brenner-Butler theorem, and characterise the global dimension of endomorphism algebras of 2-term Gorenstein silting complexes over an algebra $A$ by terms of the Gorenstein dimension of $A$.

\vskip 10pt

First we  give the main definitions in the paper.

\vskip 10pt

For the algebra $A$, following~\cite{EJ2}, an exact sequence $G_{1}\xrightarrow{d^{1}} G_{0}\lxr M\lxr 0$ is called a proper Gorenstein-projective presentation of $M$ if each $G_{i}$ is Gorenstein-projective and ${\rm Hom}_{A}(G,$ $G_{1})\lxr {\rm Hom}_{A}(G,G_{0}) \lxr {\rm Hom}_{A}(G,T)\lxr 0$ is exact for any Gorenstein-projective module $G$.

\vskip 10pt

For a morphism $\theta: G_{1}\lxr G_{0}$ with $G_{1}$ and $G_{0}$ being Gorenstein-projective modules. We consider the class of $R\mbox{-}$modules
$$D_{\theta}:=\{X\in {\rm Mod}R\mid {\rm Hom}_{R}(\theta, X) \ {\rm is \ epic}\}.$$

{\bf Definition~\ref{Gsmodule}}\ Let $R$ be a Noetherian ring. We say that an $R\mbox{-}$module $T$ is

\vskip 5pt

$\bullet$ \ partial Gorenstein silting if there is a proper Gorenstein-projective presentation $\theta$ of $T$ such that

\vskip 5pt

\ \ (Gs1) $D_{\theta}$ is a relative torsion class (i.e. closed for $G\mbox{-}$epimorphic images, $G\mbox{-}$extensions and coproducts);

\vskip 5pt

\ \ (Gs2) $T$ lies in $D_{\theta}$.

\vskip 5pt

$\bullet$ \ Gorenstein silting if there is a proper Gorenstein-projective presentation $\theta$ of $T$ such that ${\rm Gen}_{G}(T)=D_{\theta}$.

\vskip 10pt

Let $M$ be an $A\mbox{-}$module. Then there is a minimal Gorenstein-projective presentation
$G_{1}\lxr G_{0}\lxr M\lxr 0$ of $M$. This induces  the following exact sequences
$$0\lxr {\rm Hom}_{A}(M, E)\lxr {\rm Hom}_{A}(G_{0}, E)\lxr {\rm Hom}_{A}(G_{1}, E)\lxr {\rm Tr}_{G}M\lxr 0$$
and
$$0\lxr \tau_{G}M\lxr D{\rm Hom}_{A}(G_{1}, E)\lxr D{\rm Hom}_{A}(G_{0}, E),$$
with ${\rm Tr}_{G}M\in {\rm A(Gproj)}\mbox{-}{\rm mod}$ and $\tau_{G}M\in {\rm mod}{\rm A(Gproj)}$.  We denote by ${\rm A(Gproj)}$ the Cohen-Macaulay Auslander algebra of $A$.

\vskip 10pt

{\bf Definition~\ref{rigid}}\ Let $A$ be a finite dimensional $k\mbox{-}$algebra of finite CM-type with the Gorenstein-projective generator $E$. We say that an $A\mbox{-}$module $M$ is $\tau_{G}\mbox{-}$rigid if ${\rm Hom}_{{\rm A(Gproj)}}$ $((E,M), \tau_{G}M)=0$ for the left ${\rm A(Gproj)}\mbox{-}$module ${\rm Hom}_{A}(E,M)$.

\vskip 10pt

{\bf Definition~\ref{Gscomplex}}\ Let  $G^{\bullet}: G_{1}\stackrel{d^{1}}{\lxr} G_{0}$ be a complex with $G_{i}\in {\rm Gproj}A$ for $i=0,1$. We say that $G^{\bullet}$ is

\vskip 5pt

$\bullet$ \ 2-term partial Gorenstein silting in $K^{b}({\rm Gproj}A)$ if it satisfies the following two conditions:

\vskip 5pt

{\rm (i)} $G_{1}\lxr {\rm Im}d^{1}$ and $G_{0}\lxr {\rm Coker}d^{1}$ are right ${\rm Gproj}A$-approximations;

\vskip 5pt

{\rm (ii)} ${\rm Hom}_{D_{gp}^{b}(A)}(G^{\bullet}, G^{\bullet}[1])=0$.

\vskip 5pt

$\bullet$ \ 2-term Gorenstein silting in $K^{b}({\rm Gproj}A)$ if it is a  2-term partial Gorenstein silting complex and ${\rm thick}G^{\bullet}=D_{gp}^{b}(A)$.

\vskip 10pt

Our main theorems are as follows:

\vskip 5pt

{\bf Theorem A}\ (Theorem~\ref{properties} and~\ref{equivalence}) \ Let $A$ be a finite dimensional algebra of finite CM-type with the Gorenstein-projective generator $E$. Let $M$ be an $A\mbox{-}$module in ${\rm mod}A$.  Then the following statements are equivalent.
\begin{enumerate}

\item $M$ is $\tau_{G}\mbox{-}$rigid.

\vskip 5pt

\item ${\rm Hom}_{A}(E, M)$ is a $\tau\mbox{-}$rigid module, where $\tau$ is the Auslander-Reiten translation over ${\rm A(Gproj)}$.

\vskip 5pt

\item $M$ is  a partial Gorenstein silting $A$-module.
\end{enumerate}

\vskip 5pt

Moreover, $\tau_{G}M\cong \tau{\rm Hom}_{A}(E, M).$
\vskip 10pt

Let $G^{\bullet}: G_{1}\stackrel{d^{1}}{\lxr} G_{0}$ be a complex in $D_{gp}^{b}(A)$ with $G_{i}\in {\rm Gproj}A$ for $i=0,1$. Consider the subcategories of ${\rm mod}A:$
$$\mathcal{T}(G^{\bullet})=\{X\in {\rm mod}A\mid {\rm Hom}_{D_{gp}^{b}(A)}(G^{\bullet}, X[1])=0\}$$
and
$$\mathcal{F}(G^{\bullet})=\{Y\in {\rm mod}A\mid {\rm Hom}_{D_{gp}^{b}(A)}(G^{\bullet}, Y)=0\}.$$
Let $B={\rm End}_{D_{gp}^{b}(A)}(G^{\bullet})^{op}$. Consider the subcategories of ${\rm mod}B$
$$\mathcal{X}(G^{\bullet})= {\rm Hom}_{D_{gp}^{b}(A)}(G^{\bullet},\ \mathcal{F}(G^{\bullet})[1]) \ \ \ and \ \ \ \mathcal{Y}(G^{\bullet})= {\rm Hom}_{D_{gp}^{b}(A)}(G^{\bullet},\ \mathcal{T}(G^{\bullet})).$$

\vskip 5pt

{\bf Theorem B}\ (Theorem~\ref{torsionpair}, Theorem~\ref{torsioninb} and Theorem~\ref{gldim})\ Let $G^{\bullet}: G_{1}\lxr G_{0}$ be a 2-term Gorenstein silting complex in $D_{gp}^{b}(A)$, where $G_{i}\in {\rm Gproj}A$. Then the following statements hold.
\begin{enumerate}

\item $(\mathcal{T}(G^{\bullet}), \mathcal{F}(G^{\bullet}))$ is a torsion pair for ${\rm mod}A$.

\vskip 5pt

\item $(\mathcal{X}(G^{\bullet}), \mathcal{Y}(G^{\bullet}))$ is a torsion pair in ${\rm mod}B$ and there are equivalences
$${\rm Hom}_{D_{gp}^{b}(A)}(G^{\bullet}, -): \mathcal{T}(G^{\bullet})\lxr \mathcal{Y}(G^{\bullet}),$$
and
$${\rm Hom}_{D_{gp}^{b}(A)}(G^{\bullet}, -[1]): \mathcal{F}(G^{\bullet})\lxr \mathcal{X}(G^{\bullet}).$$

\vskip 5pt

\item ${\rm gldim} B\leq {\rm Gdim}A+1$.
\end{enumerate}

\vskip 10pt

Throughout $A$ is a finite dimensional $k\mbox{-}$algebra over a field $k$, and ${\rm mod}A$ is the category of finitely generated left $A\mbox{-}$modules. A module $G$ of ${\rm mod}A$ is Gorenstein-projective if there is an exact sequence
$$\cdots \lxr P^{-1}\lxr P^{0}\stackrel{d^{0}}{\lxr} P^{1}\lxr \cdots$$
of projective modules of ${\rm mod}A$, which stays exact after applying ${\rm Hom}_{A}(-, P)$ for each projective module $P$, such that $G\cong {\rm Ker}d^{0}$ (see \cite{EJ2}). Denote by ${\rm Gproj}A$ and ${\rm proj}A$ the full subcategories of ${\rm mod}A$ consisting of Gorenstein-projective modules and projective modules, respectively.

\vskip 10pt

Let $\mathcal{A}$ be an abelian category and $\mathcal{X},\ \mathcal{Y}$ the full additive subcategories of $\mathcal{A}$. Let $M$ be an object of $\mathcal{A}$. Following~\cite{AS2}, a morphism $f:X\lxr M$ with $X\in \mathcal{X}$ is called a right $\mathcal{X}\mbox{-}$approximation of $M$ if any morphism from an object $\mathcal{X}$ to $M$ factors through $f$. $\mathcal{X}$ is called contravariantly finite if any object in $\mathcal{A}$ admits a right $\mathcal{X}\mbox{-}$approximation. A morphism $g:M\lxr Y$ with $Y\in \mathcal{Y}$ is called a left $\mathcal{Y}\mbox{-}$approximation of $M$ if any morphism from $M$ to an object $\mathcal{Y}$ factors through $g$. $\mathcal{Y}$ is called covariantly finite if any object in $\mathcal{A}$ admits a left $\mathcal{Y}\mbox{-}$approximation.

\vskip 10pt

Following~\cite{EJ2}, an exact sequence $G_{1}\xrightarrow{d^{1}} G_{0}\lxr M\lxr 0 \ (*)$ is called a proper Gorenstein-projective presentation of $M$ if each $G_{i}$ is Gorenstein-projective and ${\rm Hom}_{A}(G,$ $G_{1})\lxr {\rm Hom}_{A}(G,G_{0}) \lxr {\rm Hom}_{A}(G,T)\lxr 0$ is exact for any Gorenstein-projective module $G$. $(*)$ is minimal if $G_{1}\lxr {\rm Im}d^{1}$ and $G_{0}\lxr M$ are right ${\rm Gproj}A\mbox{-}$approximation.  Moreover, the exact sequence $0\lxr G_{n}\lxr \ldots \lxr G_{1} \lxr G_{0} \rightarrow M \rightarrow 0$  is called a proper Gorenstein-projective resolution of $M$ of length $n$ for some non-negative integer $n$, if each $G_{i}$ is all Gorenstein-projective and $0\lxr {\rm Hom}_{A}(G, G_{n})\lxr \cdots \lxr {\rm Hom}_{A}(G,G_{0}) \lxr {\rm Hom}_{A}(G,M)\lxr 0$ is exact for any Gorenstein-projective module $G$. We say that $M$ has Gorenstein-projective dimension $d$, denoted by ${\rm Gpd}M$, if $d$ is the least, and  the Gorenstein dimension of $A$ is defined as follows:
$${\rm Gdim}A={\rm sup}\{{\rm Gpd}M|M\in {\rm mod}A\}.$$

\vskip 10pt

Now we write $C^{b}(A),\ K^{b}(A)$ and $D^{b}(A)$ for the bounded complex category, bounded homotopy category and bounded derived category of ${\rm mod}A$, respectively. Denote by $K^{b}({\rm Gproj}A)$ (resp. $K^{b}({\rm proj}A)$) the corresponding bounded homotopy category of Gorenstein-projective modules (resp. projective modules).

\vskip 10pt

\subsection{ Gorenstein derived category}
\vskip 5pt

A complex $C^{\bullet}\in C^{b}(A)$  is $\mathcal{GP}\mbox{-}$acyclic, if ${\rm Hom}_{A}(G,$ $ C^{\bullet})$ is acyclic for each $G\in {\rm Gproj}A$.  A chain map $f^{\bullet}: X^{\bullet}\lxr Y^{\bullet}$ is a $\mathcal{GP}\mbox{-}$quasi-isomorphism, if ${\rm Hom}_{A}(G, f^{\bullet})$ is a quasi-isomorphism for each $G\in {\rm Gproj}A$, i.e., there are isomorphisms of abelian groups for any $n\in \mathbb{Z}$,
$$H^{n}{\rm Hom}_{A}(G, f^{\bullet}): H^{n}{\rm Hom}_{A}(G, X^{\bullet})\cong H^{n}{\rm Hom}_{A}(G, Y^{\bullet}).$$
Put
$$K^{b}_{gpac}(A):=\{ X^{\bullet}\in K^{b}(A)\mid X^{\bullet} \ is \ \mathcal{GP}\mbox{-}\ acyclic\}.$$
Then $K^{b}_{gpac}(A)$ is a thick triangulated subcategory of $K^{b}(A)$. Following~\cite{GZ}, we have the following definition:
$$D_{gp}^{b}(A):=K^{b}(A)/ K^{b}_{gpac}(A),$$
which is called the bounded Gorenstein derived category.

\vskip 10pt

Following~\cite{AS1}, an exact sequence $0\lxr X^{\bullet}\stackrel{f}{\lxr} Y^{\bullet}\stackrel{g}{\lxr} Z^{\bullet}\lxr 0$ in  $C^{b}(A)$ is $G\mbox{-}$exact if and only if $0\lxr {\rm Hom}_{A}(G, X^{\bullet})\lxr {\rm Hom}_{A}(G, Y^{\bullet})\lxr {\rm Hom}_{A}(G, Z^{\bullet})\lxr 0$ is exact for all $G\in {\rm Gproj}A$. In the module case, $g$ is called a $G$-epimorphism and $X$ is called a $G$-submodule of $Y$. From \cite{GZ}, if the exact sequence $0\lxr X\lxr Y\lxr Z\lxr 0$ in ${\rm mod}A$ is $G\mbox{-}$exact, then $X\lxr Y\lxr Z\lxr X[1]$ is a triangle in $D_{gp}^{b}(A)$.

\vskip 10pt

\subsection{\bf CM-Auslander algebra}
\vskip 5pt

Recall from \cite{B2} that  $A$ is of finite Cohen-Macaulay type (resp. finite CM-type for simply), if there are only finitely many isomorphism classes of indecomposable finitely generated Gorenstein-projective $A$-modules. Let $\{E_{i} \}_{i=1}^{n}$ be all non-isomorphic finitely generated Gorenstein-projective $A\mbox{-}$modules and $E=\mathop{\oplus}\limits_{i=1}^{n}E_{i}$, and ${\rm A(Gproj)}= {\rm End}_{A}(E)^{op}$. Then $E$ is an $A\mbox{-}{\rm A(Gproj)}\mbox{-}$bimodule, and ${\rm A(Gproj)}$ is called the the Cohen-Macaulay Auslander (resp. CM-Auslander for simply) algebra of $A$.

\vskip 10pt

\subsection{\bf torsion pairs and t-structures}

\vskip 5pt

\begin{defn}\ {\rm (\cite{D})}\  A pair $(\mathcal{T}, \mathcal{F})$ of full subcategories of ${\rm mod}A$ is called a torsion pair provided that:

\vskip 5pt

\begin{enumerate}
\item $\mathcal{T}\cap\mathcal{F}={0}$;

\vskip 5pt

\item If $T\lxr Y\lxr 0$ is exact with $T\in \mathcal{T}$ then $Y\in \mathcal{T}$;

\vskip 5pt

\item If $0\lxr Y\lxr F$ is exact with $F\in \mathcal{F}$ then $Y\in \mathcal{F}$;

\vskip 5pt

\item For each $A\mbox{-}$module $X$ there is an exact sequence
$$0\lxr T\lxr X\lxr F\lxr 0$$
with $T\in \mathcal{T}$ and $F\in \mathcal{F}$.
\end{enumerate}
\end{defn}

\vskip 10pt

\begin{defn}\ {\rm (\cite{BBD})}\  A pair $(\mathcal{X}, \mathcal{Y})$ of subcategories of the triangulated category $D_{gp}^{b}(A)$ is called a t-structure provided that:

\vskip 5pt

\begin{enumerate}
\item $\mathcal{X}[1]\subset \mathcal{X}$ and $\mathcal{Y}[-1]\subset \mathcal{Y}$;

\vskip 5pt

\item ${\rm Hom}_{D_{gp}^{b}(A)}(X, Y[-1])=0$ for any $X\in \mathcal{X}$ and $Y\in \mathcal{Y}$;

\vskip 5pt

\item for any $C\in D_{gp}^{b}(A)$, there exists a distinguished triangle
$$X\lxr C\lxr Y[-1]\lxr X[1]$$
with $X\in \mathcal{X}$ and $Y\in \mathcal{Y}$.
\end{enumerate}
\end{defn}

\vskip 10pt

\section{\bf Gorenstein silting modules}

\vskip 5pt

In this section, we introduce and study a class of modules, which we call the Gorenstein silting module, and show that it coincides with the relative rigid module over an algebra of finite CM-type introduced by Gao (\cite{G2}). We also show the relations among  Gorenstein tilting modules, Gorenstein star modules  and Gorenstein silting modules.

\vskip 10pt

\subsection{\bf Gorenstein silting modules} \ In this subsection, we introduce the Gorenstein silting module. Before this, we make some preparation.

\vskip 5pt

Throughout this subsection, let $R$ be a Noetherian ring, and ${\rm Mod}R$ the category of all left $R\mbox{-}$modules. We denote by $Gp(R)$(resp.\ $Gi(R)$) the full subcategory of ${\rm Mod}R$ consisting of Gorenstein-projective (resp. Gorenstein-injective)  modules.

\vskip 5pt

Now we assume that $Gp(R)$ is contravariantly finite in ${\rm Mod}R$. For $R\mbox{-}$modules $M$ and $N$, we compute right derived functors of ${\rm Hom}_{R}(M, N)$ using a Gorenstein-projective resolution of $M$ (\cite{EJ2}, \cite{Hol}). We will denote these derived functors by ${\rm Gext}_{R}^{i}(M, N)$. A short exact sequence $0\lxr M\lxr N\lxr L\lxr 0$ is  $G\mbox{-}$exact if and only if it is in ${\rm Gext}_{R}^{1}(L, M)$.

\vskip 10pt

Let $T$ be an $R$-module. Put
$${\rm Pres}_{G}(T):=\{M\in {\rm Mod}R|\exists \ a \ G\mbox{-}exact \ sequence \ T_{1}\lxr T_{0}\lxr M\lxr 0$$ $$ \ with \ T_{i}\in {\rm Add}T\},$$
$${\rm Gen}_{G}(T):=\{M\in {\rm Mod}R|\exists \ a \ G\mbox{-}exact \ sequence \ T_{0}\lxr M\lxr 0 \ with \ T_{0}\in {\rm Add}T\},$$
and
$$T^{G\perp}:=\{M\in {\rm Mod}R|{\rm Gext}^{1}_{R}(T, M)=0\},\ \ \ T^{\perp 0}:=\{M\in {\rm Mod}R|{\rm Hom}_{R}(T, M)=0\},$$
where ${\rm Add}T$ denotes the subcategory of modules consisting of direct summands of direct sums of $T$

\vskip 10pt

For a morphism $\theta: G_{1}\lxr G_{0}$ with $G_{1}$ and $G_{0}$ being Gorenstein-projective modules. We consider the class of $R\mbox{-}$modules
$$D_{\theta}:=\{X\in {\rm Mod}R\mid {\rm Hom}_{R}(\theta, X) \ {\rm is \ epic}\}.$$
We first collect some useful propoerties of $D_{\theta}$.

\vskip 10pt

\begin{lem}\
\label{silting}
Let $\theta: G_{1}\lxr G_{0}$ with $G_{1}$ and $G_{0}$ being Gorenstein-projective modules, and $T$ the cokernel of $\theta$ such that $\theta$ is the  proper Gorenstein-projective presentation of $T$.

\vskip 5pt

{\rm (i)} $D_{\theta}$ is closed under $G\mbox{-}$epimorphic images, $G\mbox{-}$extensions and direct products.

\vskip 5pt

{\rm (ii)} The class $D_{\theta}$ is contained in $T^{G\perp}$.
\begin{proof}\ The statement (i) is easy to prove. Now we prove (ii). There exists the following diagram
\[
\xymatrix@C=0.5cm{
 && G_{1}\ar@{->}[rr]^{\theta}    && G_{0} \ar[rr]^{} \ar@{<-_{)}}[dl]_{i} &&T \ar[rr]      &&0\\
   &&& {\rm Im}\theta \ar@{<<-}[ul]^{\pi}  &&& \\
}
\]
Consider the $G\mbox{-}$exact sequence $0\lxr {\rm Im}\theta\stackrel{i}{\lxr} G_{0}\lxr T\lxr 0$. Applying the functor ${\rm Hom}_{R}(-, X)$ for any $X\in D_{\theta}$, then we get the $G\mbox{-}$exact sequence
$${\rm Hom}_{R}(G_{0}, X)\stackrel{i^{*}}{\lxr} {\rm Hom}_{R}({\rm Im}\theta, X)\lxr {\rm Gext}_{R}^{1}(T, X)\lxr 0.$$
We show that $i^{*}$ is surjective. Let $f\in {\rm Hom}_{R}({\rm Im}\theta, X)$. Since $X\in D_{\theta}$, there is a map $g: G_{0}\lxr X$ such that $f\pi= g\theta= gi\pi$. Since $\pi$ is an epimorphism, we have $f= gi$. Hence ${\rm Gext}_{R}^{1}(T, X)=0$, and so $X\in T^{G\perp}$.
\end{proof}
\end{lem}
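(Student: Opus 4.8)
The plan is to treat the two statements separately, since (i) is bookkeeping with $G$-exact sequences while (ii) is the substantive point. For (i), I would verify the three closure properties directly from the definition $D_{\theta}=\{X\mid \mathrm{Hom}_{R}(\theta,X)\text{ is epic}\}$. For $G$-epimorphic images: given $X\in D_{\theta}$ and a $G$-epimorphism $X\twoheadrightarrow X'$, apply $\mathrm{Hom}_{R}(-,?)$ along the two Gorenstein-projective modules $G_{1},G_{0}$; since $G_{i}$ is Gorenstein-projective, $\mathrm{Hom}_{R}(G_{i},-)$ carries $G$-exact sequences to exact sequences, so one gets a commuting square with surjective vertical maps $\mathrm{Hom}_{R}(G_{i},X)\to\mathrm{Hom}_{R}(G_{i},X')$, and a diagram chase shows $\mathrm{Hom}_{R}(\theta,X')$ is epic. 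For $G$-extensions $0\to X'\to X\to X''\to 0$ with $X',X''\in D_{\theta}$, apply $\mathrm{Hom}_{R}(G_{i},-)$ to obtain exact rows, then use the snake lemma (or the long exact $\mathrm{Gext}$-sequence) on the resulting ladder to conclude $\mathrm{Hom}_{R}(\theta,X)$ is epic. For products, $\mathrm{Hom}_{R}(G_{i},\prod X_{j})\cong\prod\mathrm{Hom}_{R}(G_{i},X_{j})$ and a product of epimorphisms is an epimorphism, so $D_{\theta}$ is closed under arbitrary products. None of this needs properness of $\theta$.

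For (ii), the key is the properness hypothesis, which guarantees that $0\to\mathrm{Im}\,\theta\xrightarrow{i}G_{0}\to T\to 0$ is $G$-exact (this is exactly what the diagram in the statement records, with $\theta=i\pi$, $\pi$ epic onto $\mathrm{Im}\,\theta$, $i$ monic). Then I would apply $\mathrm{Hom}_{R}(-,X)$ for $X\in D_{\theta}$, using that $\mathrm{Gext}^{1}$ is computed from Gorenstein-projective resolutions and that $0\to\mathrm{Hom}_{R}(T,X)\to\mathrm{Hom}_{R}(G_{0},X)\xrightarrow{i^{*}}\mathrm{Hom}_{R}(\mathrm{Im}\,\theta,X)\to\mathrm{Gext}^{1}_{R}(T,X)\to 0$ is exact (the term $\mathrm{Gext}^{1}_{R}(G_{0},X)=0$ since $G_{0}$ is Gorenstein-projective). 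The crux is then showing $i^{*}$ is surjective: given $f\colon\mathrm{Im}\,\theta\to X$, the composite $f\pi\colon G_{1}\to X$ is an element of $\mathrm{Hom}_{R}(G_{1},X)$, and since $X\in D_{\theta}$ means $\mathrm{Hom}_{R}(\theta,X)$ is epic, there is $g\colon G_{0}\to X$ with $g\theta=f\pi$, i.e. $gi\pi=f\pi$; cancelling the epimorphism $\pi$ on the right gives $gi=f$, so $f=i^{*}(g)$. Surjectivity of $i^{*}$ forces $\mathrm{Gext}^{1}_{R}(T,X)=0$, i.e. $X\in T^{G\perp}$.

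The main obstacle I anticipate is purely a matter of care rather than depth: making sure at each point that one only applies $\mathrm{Hom}_{R}(G,-)$ to $G$-exact sequences (so that exactness is preserved), and that the identification of the cokernel term with $\mathrm{Gext}^{1}_{R}(T,X)$ is legitimate — this uses that $\theta$ is a \emph{proper} Gorenstein-projective presentation, so that $\cdots\to G_{1}\xrightarrow{\theta}G_{0}\to T\to 0$ really does compute $\mathrm{Gext}$, together with the standing assumption that $Gp(R)$ is contravariantly finite in $\mathrm{Mod}\,R$ (which is what makes $\mathrm{Gext}$ well-defined in the first place). Once those foundational points are in place, the cancellation argument for surjectivity of $i^{*}$ is immediate, and the lemma follows.
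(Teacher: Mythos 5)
Your proposal is correct and follows essentially the same route as the paper: for (ii) you apply $\mathrm{Hom}_{R}(-,X)$ to the $G$-exact sequence $0\to\mathrm{Im}\,\theta\xrightarrow{i}G_{0}\to T\to 0$, identify the cokernel of $i^{*}$ with $\mathrm{Gext}^{1}_{R}(T,X)$, and prove surjectivity of $i^{*}$ by lifting $f\pi$ through $\theta=i\pi$ and cancelling the epimorphism $\pi$, exactly as in the paper's argument. Your filled-in verification of (i), which the paper dismisses as easy, is also sound.
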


\vskip 10pt

\begin{defn}\
\label{Gsmodule}
We say that an $R\mbox{-}$module $T$ is

\vskip 5pt

$\bullet$ \ {\bf partial Gorenstein silting} if there is a proper Gorenstein-projective presentation $\theta$ of $T$ such that

\vskip 5pt

\ \ (Gs1) $D_{\theta}$ is a relative torsion class (i.e. closed for $G\mbox{-}$epimorphic images, $G\mbox{-}$extensions and coproducts);

\vskip 5pt

\ \ (Gs2) $T$ lies in $D_{\theta}$.

\vskip 5pt

$\bullet$ \ {\bf Gorenstein silting} if there is a proper Gorenstein-projective presentation $\theta$ of $T$ such that ${\rm Gen}_{G}(T)=D_{\theta}$.
\end{defn}

\vskip 10pt

\begin{prop}\ Let $T$ be an $R$-module with the proper Gorenstein-projective presentation $\theta: G_{1}\to G_{0}$. If $T$ is a partial Gorenstein silting module with respect to $\theta$, and for each $E\in Gp(R)$, there exists a $G\mbox{-}$exact sequence $E\stackrel{\phi}{\lxr} T_{0}\lxr T_{-1}\lxr 0$ with $T_{0}$ and $T_{-1}$ in ${\rm Add}T$ such that $\phi$ is the left $D_{\theta}\mbox{-}$approximation, then $T$ is a Gorenstein silting module.
\begin{proof}\  Since $T$ is a partial Gorenstein silting module with respect to $\theta$, it is clear that ${\rm Gen}_{G}(T)\subseteq D_{\theta}$. Let $X$ be an object in $D_{\theta}$. Since $Gp(R)$ is contravariantly finite, there is an $R$-module $E\in Gp(R)$ and a $G$-epimorphism $E\lxr X$. By assumption this epimorphism factors through the left $D_{\theta}\mbox{-}$approximation $\phi: E\lxr T_{0}$ via a $G$-epimorphism $g: T_{0}\lxr X$. Thus $X$ lies in ${\rm Gen}_{G}(T)$. This means that $T$ is a Gorenstein silting module.
\end{proof}
\end{prop}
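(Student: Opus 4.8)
The plan is to show that the presentation $\theta$ already appearing in the hypothesis itself witnesses the Gorenstein silting condition, i.e.\ that ${\rm Gen}_{G}(T)=D_{\theta}$; by Definition~\ref{Gsmodule} this is exactly what is needed. One inclusion is formal from the partial Gorenstein silting axioms (Gs1), (Gs2), and the opposite inclusion is where the standing assumption that $Gp(R)$ is contravariantly finite and the factorisation hypothesis come into play.

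First I would record the inclusion ${\rm Gen}_{G}(T)\subseteq D_{\theta}$. By (Gs2) we have $T\in D_{\theta}$; since $D_{\theta}$ is closed under coproducts by (Gs1) and under direct summands (because ${\rm Hom}_{R}(\theta,M_{1}\oplus M_{2})$ is epic iff each ${\rm Hom}_{R}(\theta,M_{i})$ is), it follows that ${\rm Add}T\subseteq D_{\theta}$. If $M\in {\rm Gen}_{G}(T)$, pick a $G$-exact sequence $T_{0}\lxr M\lxr 0$ with $T_{0}\in {\rm Add}T$; then $M$ is a $G$-epimorphic image of the object $T_{0}\in D_{\theta}$, hence $M\in D_{\theta}$ by (Gs1) (equivalently, by Lemma~\ref{silting}(i)).

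For the reverse inclusion $D_{\theta}\subseteq {\rm Gen}_{G}(T)$, take $X\in D_{\theta}$. Since $Gp(R)$ is contravariantly finite in ${\rm Mod}R$, there is a right $Gp(R)$-approximation $\psi\colon E\lxr X$ with $E\in Gp(R)$; as any surjection onto $X$ from a projective (hence Gorenstein-projective) module factors through $\psi$, the map $\psi$ is an epimorphism, and the approximation property says precisely that ${\rm Hom}_{R}(G,\psi)$ is surjective for every $G\in Gp(R)$, so $\psi$ is a $G$-epimorphism. Apply the hypothesis to this $E$: there is a $G$-exact sequence $E\stackrel{\phi}{\lxr}T_{0}\lxr T_{-1}\lxr 0$ with $T_{0},T_{-1}\in {\rm Add}T$ and $\phi$ a left $D_{\theta}$-approximation. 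Since $T_{0}\in {\rm Add}T\subseteq D_{\theta}$ and $X\in D_{\theta}$, the map $\psi$ factors through $\phi$, say $\psi=g\phi$ with $g\colon T_{0}\lxr X$. Then $g$ is epic because $\psi$ is, and for every $G\in Gp(R)$ the map ${\rm Hom}_{R}(G,g)$ is surjective, being the final factor of the surjection ${\rm Hom}_{R}(G,\psi)={\rm Hom}_{R}(G,g)\circ {\rm Hom}_{R}(G,\phi)$; hence $T_{0}\stackrel{g}{\lxr}X\lxr 0$ is $G$-exact and $X\in {\rm Gen}_{G}(T)$. Combining the two inclusions gives ${\rm Gen}_{G}(T)=D_{\theta}$, so $T$ is Gorenstein silting.

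The only delicate point I anticipate is the bookkeeping of $G$-epimorphisms: one must check that both $\psi$ and the factored map $g$ are $G$-epimorphisms and not merely epimorphisms, which is where the strength of the hypothesis is used — namely that $\phi$ is a \emph{left $D_{\theta}$-approximation}, not just that some sequence $E\to T_{0}\to T_{-1}\to 0$ exists — and where one keeps track of the functors ${\rm Hom}_{R}(G,-)$ for all $G\in Gp(R)$ simultaneously. Everything else is a routine unwinding of the definitions of ${\rm Gen}_{G}$ and $D_{\theta}$.
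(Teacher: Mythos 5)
Your proof is correct and follows essentially the same route as the paper's: both establish ${\rm Gen}_{G}(T)\subseteq D_{\theta}$ from the partial silting axioms, then use contravariant finiteness of $Gp(R)$ to produce a $G$-epimorphism $E\to X$ for $X\in D_{\theta}$ and factor it through the left $D_{\theta}$-approximation $\phi$. Your version merely fills in details the paper leaves implicit (why the right $Gp(R)$-approximation of $X$ is a $G$-epimorphism, and why the factored map $g$ remains one), which is a welcome clarification but not a different argument.
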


\vskip 10pt

\subsection{\bf Connection with Gorenstein tilting (resp.\ star) modules} \ In this subsection, we characterise the relations among Gorenstein silting modules, Gorenstein tilting modules and Gorenstein star modules.

\vskip 10pt

Throughout, let $R$ be a Noetherian ring such that $Gp(R)$  is the contravariantly finite subcategory of ${\rm Mod}R$.

\vskip 10pt

\begin{defn}\ {\rm (\cite{AS1, G1, YLO})} \  An $R\mbox{-}$module $T$ is called a Gorenstein tilting module if $T^{G\perp}= {\rm Pres}_{G}(T)$, or equivalently, it satisfies the following three conditions:

\vskip 5pt

(T1) ${\rm Gpd}_{R}T\leq 1$.

\vskip 5pt

(T2) ${\rm Gext}^{1}_{R}(T, T^{(I)})= 0$ for  all sets $I$.

\vskip 5pt

(T3) For any $E\in Gp(R)$, there exists a $G\mbox{-}$exact sequence $0\lxr E\lxr T_{0}\lxr T_{-1}\lxr 0$ with each $T_{i}\in {\rm Add}T$.
\end{defn}

\vskip 10pt

\begin{defn}\ {\rm (\cite{Z})}\  An $R\mbox{-}$module $T$ is called a Gorenstein star module if

\vskip 5pt

{\rm (i)} Any $G\mbox{-}$exact sequence $0\lxr X\lxr T_{0}\lxr Y\lxr 0$ with $T_{0}\in {\rm Add}T$ and $X\in {\rm Gen}_{G}(T)$ is  ${\rm Hom}_{R}(T, -)\mbox{-}$exact.

\vskip 5pt

{\rm (ii)} ${\rm Gen}_{G}(T)={\rm Pres}_{G}T$.
\end{defn}

\vskip 10pt

\begin{lem}\
\label{Gorensteinstar}
{\rm (\cite[Proposition 2.9]{Z})}\ The following statements are equivalent for an $R\mbox{-}$module $T$:

\vskip 5pt

{\rm (i)} $T$ is a Gorenstein star module and ${\rm Gen}_{G}(T)$ is closed under $G\mbox{-}$extension.

\vskip 5pt

{\rm (ii)} ${\rm Gen}_{G}(T)= {\rm Pres}_{G}T\subseteq T^{G\perp}$.
\end{lem}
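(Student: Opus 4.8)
The plan is to prove the two implications separately, leaning throughout on the long exact sequences of the relative derived functors ${\rm Gext}^i_R$ and on the fact that, since $Gp(R)$ is contravariantly finite, the $G$-exact sequences form a well-behaved exact structure on ${\rm Mod}R$ (pullbacks and pushouts along arbitrary maps stay $G$-exact and the usual $3\times3$ lemma is available).

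\emph{$(i)\Rightarrow(ii)$.} The equality ${\rm Gen}_G(T)={\rm Pres}_G(T)$ is built into the definition of a Gorenstein star module, so the only content is the inclusion ${\rm Gen}_G(T)\subseteq T^{G\perp}$. Fix $M\in{\rm Gen}_G(T)$ and an arbitrary $G$-exact sequence $\xi\colon 0\to M\to X\xrightarrow{v} T\to 0$ (such sequences exhaust ${\rm Gext}^1_R(T,M)$). Since $M,T\in{\rm Gen}_G(T)$ and ${\rm Gen}_G(T)$ is closed under $G$-extensions, $X\in{\rm Gen}_G(T)={\rm Pres}_G(T)$; pick a $G$-exact presentation $T_1\to T_0\xrightarrow{p} X\to 0$ with $T_i\in{\rm Add}T$, and observe that $K=\ker p$ lies in ${\rm Gen}_G(T)$ (applying ${\rm Hom}_R(G,-)$ shows $T_1\to K\to 0$ is $G$-exact). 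Then $vp\colon T_0\to T$ is a $G$-epimorphism; put $W=\ker(vp)$. A $G$-exact $3\times3$ diagram assembled from $\xi$, from $0\to K\to T_0\xrightarrow{p}X\to 0$ and from $0\to W\to T_0\xrightarrow{vp}T\to 0$ produces a $G$-exact sequence $0\to K\to W\to M\to 0$, so $W\in{\rm Gen}_G(T)$ again by closure under $G$-extensions. Now $0\to W\to T_0\to T\to 0$ has $T_0\in{\rm Add}T$ and $W\in{\rm Gen}_G(T)$, hence it is ${\rm Hom}_R(T,-)$-exact by part (i) of the star condition; in particular $\mathrm{id}_T$ lifts to $s\colon T\to T_0$ with $(vp)s=\mathrm{id}_T$, and then $ps$ splits $v$. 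Thus $\xi$ splits, ${\rm Gext}^1_R(T,M)=0$, i.e. $M\in T^{G\perp}$.

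\emph{$(ii)\Rightarrow(i)$.} Now ${\rm Gen}_G(T)={\rm Pres}_G(T)$ is given, so it remains to verify that ${\rm Gen}_G(T)$ is closed under $G$-extensions and that part (i) of the star condition holds. For closure: given a $G$-exact $0\to A\to B\to C\to 0$ with $A,C\in{\rm Gen}_G(T)\subseteq T^{G\perp}$, the long exact sequence of ${\rm Gext}^*_R(T,-)$ forces ${\rm Gext}^1_R(T,B)=0$; choosing $G$-epimorphisms $T_0^A\to A$ and $T_0^C\to C$ with $T_0^A,T_0^C\in{\rm Add}T$, the vanishing ${\rm Gext}^1_R(T_0^C,A)=0$ (a summand of ${\rm Gext}^1_R(T^{(I)},A)=\prod_I{\rm Gext}^1_R(T,A)=0$) lets us lift $T_0^C\to C$ to $T_0^C\to B$, and a horseshoe-type argument then shows $T_0^A\oplus T_0^C\to B$ is a $G$-epimorphism, so $B\in{\rm Gen}_G(T)$. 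For part (i): applying ${\rm Hom}_R(T,-)$ to a $G$-exact $0\to X\to T_0\to Y\to 0$ with $X\in{\rm Gen}_G(T)$ and $T_0\in{\rm Add}T$ yields the exact sequence $0\to{\rm Hom}_R(T,X)\to{\rm Hom}_R(T,T_0)\to{\rm Hom}_R(T,Y)\to{\rm Gext}^1_R(T,X)\to\cdots$, and ${\rm Gext}^1_R(T,X)=0$ because $X\in{\rm Gen}_G(T)\subseteq T^{G\perp}$; hence the sequence is ${\rm Hom}_R(T,-)$-exact.

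The main obstacle is not any isolated clever step but the relative homological bookkeeping: one must have in hand that, under the standing assumption that $Gp(R)$ is contravariantly finite, (a) the $G$-exact sequences form an exact structure with respect to which ${\rm Gext}^i_R$ carries long exact sequences in each variable, (b) pullbacks/pushouts of $G$-exact sequences are $G$-exact and the $3\times3$ lemma is valid, and (c) ${\rm Gext}^1_R(-,N)$ takes coproducts to products — the last point resting on the fact that over a Noetherian ring a coproduct of Gorenstein-projective modules is again Gorenstein-projective, so a $G$-projective resolution of $T^{(I)}$ may be chosen degreewise a coproduct. With these foundational facts granted, both implications reduce to the diagram chases sketched above.
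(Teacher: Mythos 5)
Your argument is correct, and it is worth noting at the outset that the paper itself gives no proof of this lemma: it is imported verbatim from \cite[Proposition 2.9]{Z}. So what you have produced is a self-contained reconstruction, and it holds up. In the direction $(i)\Rightarrow(ii)$ your splitting argument is sound: $K=\ker p$ is indeed a $G$-epimorphic image of $T_1$, the kernel-on-kernels sequence $0\to K\to W\to M\to 0$ is $G$-exact (a map $G\to M$ lifts through $p$ and the lift automatically lands in $W=\ker(vp)$), and star condition (i) applied to $0\to W\to T_0\to T\to 0$ produces the section of $vp$ that splits $\xi$. In the direction $(ii)\Rightarrow(i)$, the lifting of $T_0^C\to C$ through $B\to C$ followed by the ``horseshoe'' verification that $T_0^A\oplus T_0^C\to B$ is a $G$-epimorphism is exactly right, and the deduction of star condition (i) from ${\rm Gext}^1_R(T,X)=0$ via pullback is clean.

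The one justification I would flag is your point (c): the identity ${\rm Gext}^1_R(T^{(I)},A)\cong\prod_I{\rm Gext}^1_R(T,A)$ obtained by taking a degreewise coproduct of proper resolutions is delicate in the big-module setting, because ${\rm Hom}_R(G,-)$ for an arbitrary (not finitely generated) Gorenstein-projective $G$ need not commute with coproducts, so properness of the coproduct resolution is not automatic. Fortunately you do not need the full isomorphism, only the vanishing ${\rm Gext}^1_R(T^{(I)},A)=0$, and that follows directly from the Baer description the paper grants: given a $G$-exact $0\to A\to E\xrightarrow{\pi}T^{(I)}\to 0$, pull back along each coprojection $T\to T^{(I)}$ to get a $G$-exact extension of $T$ by $A$, which splits since $A\in T^{G\perp}$; the resulting sections $s_i\colon T\to E$ assemble by the universal property of the coproduct into a section of $\pi$. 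With that substitution your proof is complete, and the parenthetical remark that ${\rm Gext}^1_R(T,B)=0$ in the closure argument is harmless but unused.
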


\vskip 10pt

\begin{prop}\
\label{relation}
The following hold.
\begin{enumerate}

\item \ Each Gorenstein tilting $R$-module is  Gorenstein silting.

\vskip 5pt

\item \ Each Gorenstein silting $R$-module is a Gorenstein star module.
\end{enumerate}
\begin{proof}\ (1) Let $T$ be a Gorenstein tilting $R$-module. Then by definition there exists a $G$-exact sequence as follows:
$$0\lxr G_{1}\stackrel{\theta}{\lxr} G_{0}\lxr T\lxr 0.$$

Let $X\in {\rm Gen}_{G}(T)$. Then there exists an $G$-epimorphism $g: T^{(I)}\lxr X$. Since ${\rm Gpd}_{R}T\leq 1$ and  ${\rm Gext}^{1}_{R}(T, T^{(I)})= 0$, applying the functor ${\rm Hom}_{R}(T, -)$ to $g$, it follows that ${\rm Gext}^{1}_{R}(T, X)= 0$, and hence $X\in D_{\theta}$.

\vskip 5pt

Let $X\in D_{\theta}$. Then by Lemma~\ref{silting} $X\in T^{G\perp}$. It follows from the definition $T^{G\perp}={\rm Pres}_{G}(T)$ that $X\in {\rm Pres}_{G}(T)$. This implies that $X\in
{\rm Gen}_{G}(T)$. Therefore, we prove that ${\rm Gen}_{G}(T)=D_{\theta}$, and $T$ is a Gorenstein silting module with respect to $\theta$.

\vskip 10pt

(2) Let $T$ be a Gorenstein silting $R$-module with the proper Gorenstein-projective presentation $\theta: G_{1}\to G_{0}$. Let $X\in {\rm Gen}_{G}(T)$. Then there is the $G$-epimorphic universal map $u: T^{(I)}\lxr X$ for some index set $I$. We will show that $K:= {\rm Ker}u$ lies in $D_{\theta}= {\rm Gen}_{G}(T)$. Pick $f: G_{1}\lxr K$. Since $T^{(I)}$ lies in $D_{\theta}$, we get the following commutative diagram of exact rows
\[
\xymatrix{
 &  G_{1}\ar[r]^{\theta}\ar[d]_{f} &  G_{0}\ar[r]^{\pi}\ar[d]_{g} & T\ar[r]^{}\ar[d]_{h} & 0 \\
0\ar[r]^{} & K\ar[r]^{v} &  T^{(I)}\ar[r]^{u} & X\ar[r]^{} & 0. }
\]
By the universality of $u$, there is the morphism $h': T\lxr  T^{(I)}$ such that $h= uh'$. Then by $ug=h\pi$ we have $u(g-h' \pi)=0$. Hence there is a map $g': G_{0}\lxr K$ such that $g-h' \pi= vg'$, and moreover,  we get from $vf=g\theta$ that $f=g'\theta$. This implies that $K\in D_{\theta}={\rm Gen}_{G}(T)$, and so ${\rm Gen}_{G}(T)\subseteq {\rm Pres}_{G}(T)$. This means that ${\rm Pres}_{G}(T)= {\rm Gen}_{G}(T)\subseteq T^{G\perp }$. Thus  Lemma ~\ref{Gorensteinstar} shows that $T$ is a Gorenstein star module.
\end{proof}
\end{prop}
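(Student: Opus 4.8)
The plan is to prove the two statements separately, leaning on the characterizations of Gorenstein tilting, Gorenstein silting and Gorenstein star modules recorded above, together with Lemma~\ref{silting} and Lemma~\ref{Gorensteinstar}.

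For (1), I would start from a Gorenstein tilting module $T$. Condition (T1) supplies a $G$-exact sequence $0\to G_{1}\xrightarrow{\theta}G_{0}\to T\to 0$ with $G_{0},G_{1}\in Gp(R)$, and this $\theta$ is in particular a proper Gorenstein-projective presentation of $T$. The first claim to establish is that for this $\theta$ one has $D_{\theta}=T^{G\perp}$: the inclusion $D_{\theta}\subseteq T^{G\perp}$ is Lemma~\ref{silting}(ii), and for the reverse inclusion I would apply ${\rm Hom}_{R}(-,X)$ to the $G$-exact sequence to obtain the exactness of ${\rm Hom}_{R}(G_{0},X)\xrightarrow{\theta^{*}}{\rm Hom}_{R}(G_{1},X)\to{\rm Gext}^{1}_{R}(T,X)\to 0$ (using ${\rm Gext}^{1}_{R}(G_{0},X)=0$), so that ${\rm Gext}^{1}_{R}(T,X)=0$ makes $\theta^{*}$ epic. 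The second claim is ${\rm Gen}_{G}(T)\subseteq T^{G\perp}$: given a $G$-epimorphism $T^{(I)}\to X$ with kernel $K$, the long exact sequence of ${\rm Gext}^{*}_{R}(T,-)$ together with ${\rm Gext}^{1}_{R}(T,T^{(I)})=0$ from (T2) and ${\rm Gext}^{2}_{R}(T,-)=0$ from ${\rm Gpd}_{R}T\le 1$ gives ${\rm Gext}^{1}_{R}(T,X)=0$. Then ${\rm Gen}_{G}(T)\subseteq T^{G\perp}=D_{\theta}$, while $D_{\theta}=T^{G\perp}={\rm Pres}_{G}(T)\subseteq{\rm Gen}_{G}(T)$ by the defining equality for a Gorenstein tilting module, so ${\rm Gen}_{G}(T)=D_{\theta}$ and $T$ is Gorenstein silting with respect to $\theta$.

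For (2), I would take a Gorenstein silting module $T$ with proper Gorenstein-projective presentation $\theta\colon G_{1}\to G_{0}$, so ${\rm Gen}_{G}(T)=D_{\theta}$. By Lemma~\ref{Gorensteinstar} it suffices to verify ${\rm Gen}_{G}(T)={\rm Pres}_{G}(T)\subseteq T^{G\perp}$; the inclusion in $T^{G\perp}$ is Lemma~\ref{silting}(ii) applied to $D_{\theta}$, and ${\rm Pres}_{G}(T)\subseteq{\rm Gen}_{G}(T)$ is automatic, so the real point is ${\rm Gen}_{G}(T)\subseteq{\rm Pres}_{G}(T)$. Given $X\in{\rm Gen}_{G}(T)$, I would form the universal $G$-epimorphic map $u\colon T^{(I)}\to X$ (with $I$ chosen so that every morphism $T\to X$ factors through $u$; it is a $G$-epimorphism because some $G$-epimorphism onto $X$ from a coproduct of copies of $T$ factors through it) and set $K={\rm Ker}u$. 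The crux is to show $K\in D_{\theta}={\rm Gen}_{G}(T)$: once this is known, a $G$-epimorphism $T^{(I')}\to K$ splices with $0\to K\to T^{(I)}\to X\to 0$ into a $G$-exact presentation $T^{(I')}\to T^{(I)}\to X\to 0$, so $X\in{\rm Pres}_{G}(T)$.

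The main obstacle is precisely this last step, and I expect it to run as a diagram chase analogous to the one for ordinary silting modules. Starting from $f\colon G_{1}\to K$, I would push it into $T^{(I)}$ along the inclusion $v$; since $T^{(I)}\in{\rm Gen}_{G}(T)=D_{\theta}$, lift $vf$ along $\theta$ to $g\colon G_{0}\to T^{(I)}$ with $g\theta=vf$; since $ug\theta=uvf=0$ and $\pi\colon G_{0}\to T$ is the cokernel of $\theta$, factor $ug$ through $\pi$ as $ug=h\pi$ with $h\colon T\to X$; use the universality of $u$ to write $h=uh'$ with $h'\colon T\to T^{(I)}$; then $u(g-h'\pi)=0$ yields $g'\colon G_{0}\to K$ with $vg'=g-h'\pi$, and $vg'\theta=g\theta=vf$ forces $g'\theta=f$ since $v$ is monic. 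Hence ${\rm Hom}_{R}(\theta,K)$ is epic and $K\in D_{\theta}$, which closes the argument. The only delicate choice is taking $u$ to be the universal $G$-epimorphism rather than an arbitrary one, so that the factorization $h=uh'$ is available; the remaining manipulations are formal.
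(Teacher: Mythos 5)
Your proposal is correct and follows essentially the same route as the paper's proof: part (1) via the identification of $D_{\theta}$ with $T^{G\perp}$ together with the defining equality $T^{G\perp}={\rm Pres}_{G}(T)$ and the vanishing of ${\rm Gext}^{1}_{R}(T,-)$ on ${\rm Gen}_{G}(T)$, and part (2) via the same diagram chase showing that the kernel of the universal $G$-epimorphism $u\colon T^{(I)}\to X$ lies in $D_{\theta}={\rm Gen}_{G}(T)$, followed by Lemma~\ref{Gorensteinstar}. The only differences are expository (you make explicit the intermediate equality $D_{\theta}=T^{G\perp}$ and the splicing step that the paper leaves implicit).
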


\vskip 10pt

Recall that a ring  $R$ is Gorenstein if $R$  is two-sided Noetherian and has finite injective dimension, both as left and right $R\mbox{-}$module.

\vskip 10pt

\begin{thm}\
\label{star}
Let $R$ be a 1-Gorenstein ring and $T$ an $R$-module. Then the following are equivalent.
\begin{enumerate}

\item \ $T$ is a Gorenstein tilting module.

\vskip 5pt

\item \ $T$ is a Gorenstein silting module.

\vskip 5pt

\item \ $T$ is a Gorenstein star module and $Gi(R)\subset{\rm Pres}_{G}(T)$.
\end{enumerate}
\begin{proof}\ First by \cite[Theorem 3.2]{Z} we know that (1)$\Longleftrightarrow$ (3). The theorem immediately follows from
Proposition~\ref{relation}.
\end{proof}
\end{thm}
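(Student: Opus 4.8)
The statement to prove is Theorem~\ref{star}: for a 1-Gorenstein ring $R$ and an $R$-module $T$, the three conditions (Gorenstein tilting, Gorenstein silting, Gorenstein star with $Gi(R)\subset{\rm Pres}_G(T)$) are equivalent.

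The plan is to reduce everything to two inputs: the already-established Proposition~\ref{relation}, and the external result \cite[Theorem 3.2]{Z}, which presumably characterises Gorenstein tilting modules over a 1-Gorenstein ring by the Gorenstein star condition together with $Gi(R)\subseteq{\rm Pres}_G(T)$. So the first move is to invoke \cite[Theorem 3.2]{Z} to get the equivalence (1)$\Longleftrightarrow$(3) for free. This is the content-bearing step, but it is cited rather than reproved.

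Next I would close the triangle of implications. Proposition~\ref{relation}(1) gives (1)$\Longrightarrow$(2): every Gorenstein tilting module is Gorenstein silting. For the remaining direction I need (2)$\Longrightarrow$(1) or (2)$\Longrightarrow$(3). Proposition~\ref{relation}(2) gives that a Gorenstein silting module is a Gorenstein star module, so half of (3) comes immediately; what remains is to check $Gi(R)\subset{\rm Pres}_G(T)$ for a Gorenstein silting $T$. Here is where the 1-Gorenstein hypothesis should enter: over a 1-Gorenstein ring every module has Gorenstein-projective dimension at most $1$, so in particular the Gorenstein-injective modules are reasonably controlled; and since $T$ is Gorenstein silting, ${\rm Gen}_G(T)=D_\theta\subseteq T^{G\perp}$ by Lemma~\ref{silting}(ii), and by Proposition~\ref{relation}(2) together with Lemma~\ref{Gorensteinstar} we in fact get ${\rm Gen}_G(T)={\rm Pres}_G(T)\subseteq T^{G\perp}$. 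So it suffices to show every Gorenstein-injective module lies in ${\rm Gen}_G(T)=D_\theta$, i.e. that ${\rm Hom}_R(\theta,X)$ is epic for every $X\in Gi(R)$; equivalently ${\rm Gext}^1_R(T,X)=0$ for $X\in Gi(R)$, which holds because ${\rm Gpd}_R T\le 1$ forces ${\rm Gext}^{\ge 1}_R(T,-)$ to vanish on Gorenstein-injectives (this is the standard orthogonality between Gorenstein-projective-dimension-$\le 1$ modules and Gorenstein-injectives over a 1-Gorenstein ring). That pins down $Gi(R)\subseteq T^{G\perp}\cap{\rm Gen}_G(T)\subseteq{\rm Pres}_G(T)$.

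I expect the main obstacle to be exactly this last verification — establishing $Gi(R)\subseteq{\rm Pres}_G(T)$ for a Gorenstein silting module — since it is the only place where the 1-Gorenstein hypothesis must genuinely be used and where one cannot simply quote Proposition~\ref{relation}. If, however, \cite[Theorem 3.2]{Z} is formulated so that (3) as stated here is literally its characterisation of Gorenstein tilting, then the cleanest route is the one the excerpt already signals: state (1)$\Longleftrightarrow$(3) from \cite[Theorem 3.2]{Z}, then close with Proposition~\ref{relation}(1) for (1)$\Rightarrow$(2) and Proposition~\ref{relation}(2) plus the observation $(2)\Rightarrow Gi(R)\subseteq{\rm Pres}_G(T)$ (via ${\rm Gpd}_R T\le1$ and Lemma~\ref{silting}(ii)) for $(2)\Rightarrow(3)$, yielding a short cycle $(1)\Rightarrow(2)\Rightarrow(3)\Rightarrow(1)$. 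Either way the proof is essentially a three-line assembly of prior results; the only real mathematical work — the Gorenstein-tilting characterisation — is black-boxed to \cite{Z}.
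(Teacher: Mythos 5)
Your skeleton is exactly the paper's: (1)$\Leftrightarrow$(3) is quoted from \cite[Theorem 3.2]{Z}, (1)$\Rightarrow$(2) is Proposition~\ref{relation}(1), and the cycle is closed through Proposition~\ref{relation}(2). You are also right that Proposition~\ref{relation}(2) only delivers the first half of (3), so that the genuine residual content of (2)$\Rightarrow$(3) is the inclusion $Gi(R)\subseteq{\rm Pres}_{G}(T)$ for a Gorenstein silting $T$ --- a point the paper's own one-line proof passes over. So the route is the same; the problem is with how you discharge that residual step.

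Your argument reduces $Gi(R)\subseteq{\rm Pres}_{G}(T)$ to showing $X\in D_{\theta}$ for every $X\in Gi(R)$, and then asserts that ``${\rm Hom}_{R}(\theta,X)$ is epic'' is \emph{equivalent} to ``${\rm Gext}^{1}_{R}(T,X)=0$''. Only one direction is available: Lemma~\ref{silting}(ii) gives $D_{\theta}\subseteq T^{G\perp}$, and its proof does not reverse. Writing $\theta=i\pi$ with $\pi\colon G_{1}\twoheadrightarrow {\rm Im}\,\theta$ and $i\colon{\rm Im}\,\theta\hookrightarrow G_{0}$, the vanishing of ${\rm Gext}^{1}_{R}(T,X)$ only says that maps ${\rm Im}\,\theta\lxr X$ extend to $G_{0}$; it does not say that an arbitrary $f\colon G_{1}\lxr X$ factors through $\pi$, so it does not put $X$ into $D_{\theta}$ (take $\theta=0$ with $G_{1}\neq 0$ to see the two classes differ). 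Moreover, the inclusion $T^{G\perp}\subseteq D_{\theta}$ you implicitly invoke, combined with ${\rm Gen}_{G}(T)=D_{\theta}={\rm Pres}_{G}(T)\subseteq T^{G\perp}$, is precisely the statement $T^{G\perp}={\rm Pres}_{G}(T)$, i.e.\ that $T$ is Gorenstein tilting --- the conclusion the implication chain is supposed to produce, so the step as written is circular. What is actually needed is an argument, using the $1$-Gorenstein hypothesis in an essential way (for instance, replacing $\theta$ by a monic proper presentation $0\lxr G_{1}'\lxr G_{0}'\lxr T\lxr 0$, for which $D_{\theta'}=T^{G\perp}$ does hold, and then comparing $D_{\theta'}$ with ${\rm Gen}_{G}(T)$), that every Gorenstein-injective module genuinely lies in $D_{\theta}$. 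The observation that ${\rm Gext}^{1}_{R}(T,X)=0$ for $X\in Gi(R)$ is correct but, by itself, insufficient.
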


\vskip 10pt

\begin{exm}\ Let $A=k[x]/\langle x^{2}\rangle$ with $k$ a field. Then $T_{2}(A)=\begin{pmatrix}\begin{smallmatrix} A & A \\ 0 & A \\ \end{smallmatrix}\end{pmatrix}$ has 9 finite-dimensional indecomposable modules: $$G_{1}=\begin{pmatrix}\begin{smallmatrix} S \\ 0 \\ \end{smallmatrix}\end{pmatrix},\ \ \ \ G_{2}=\begin{pmatrix}\begin{smallmatrix} A \\ 0 \\ \end{smallmatrix}\end{pmatrix},\ \ \ \ G_{3}=\begin{pmatrix}\begin{smallmatrix} S \\ S \\ \end{smallmatrix}\end{pmatrix},\ \ \ \ G_{4}=\begin{pmatrix}\begin{smallmatrix} A \\ S \\ \end{smallmatrix}\end{pmatrix},$$ $$G_{5}=\begin{pmatrix}\begin{smallmatrix} A \\ A \\ \end{smallmatrix}\end{pmatrix}_{\rm id},\ \ \ \ G_{6}=\begin{pmatrix}\begin{smallmatrix} 0 \\ A \\ \end{smallmatrix}\end{pmatrix},\ \ \ \ G_{7}=\begin{pmatrix}\begin{smallmatrix} S \\ A \\ \end{smallmatrix}\end{pmatrix},\ \ \ \ G_{8}=\begin{pmatrix}\begin{smallmatrix} 0 \\ S \\ \end{smallmatrix}\end{pmatrix},\ \ \ \ G_{9}=\begin{pmatrix}\begin{smallmatrix} A \\ A \\ \end{smallmatrix}\end{pmatrix}_{x},$$ where $G_{i},\ 1\leq i\leq 5$ are Gorenstein-projectives {\rm (see Beligiannis and Reiten \cite[p.101]{BR})}. Note that $T_{2}(A)$ is 1-Gorenstein, and $G_{8}$ is a Gorenstein silting module.
\end{exm}

\vskip 10pt

We finish this section with an important class of examples of (partial) Gorenstein silting modules: $\tau_{G}\mbox{-}$rigid modules over a finite dimensional $k$-algebra, where $\tau_{G}$ was introduced in \cite{G2} for an algebra of finite CM-type.

\vskip 10pt

\subsection{\bf Relative rigid modules over algebras of finite CM-type}\ Throughout this section,  $A$ is a finite dimensional  $k\mbox{-}$algebra of finite CM-type over a field $k$ and ${\rm mod}A$ is the category of finitely generated left $A$-modules. All $A\mbox{-}$modules we consider are in ${\rm mod}A$. Use the notation in the introduction.
Denote by ${\rm A(Gproj)}\mbox{-}{\rm mod}$ the category of finitely generated right ${\rm A(Gproj)}\mbox{-}$modules and ${\rm A(Gproj)}\mbox{-}\underline{{\rm mod}}$ the stable category of ${\rm A(Gproj)}\mbox{-}{\rm mod}$ modulo projective ${\rm A(Gproj)}\mbox{-}$modules. Let $D: {\rm A(Gproj)}\mbox{-}{\rm mod}\lxr {\rm mod}{\rm A(Gproj)}$ be the duality. For Simplicity, we denote the functors ${\rm Hom}_{A}(-,-)$ by $(-,-)$ and ${\rm Hom}_{{\rm A(Gproj)}}(-,-)$ by $_{{\rm A(Gproj)}}(-,-)$.

\vskip 10pt

Let $M$ be an $A\mbox{-}$module. Then there is a minimal Gorenstein-projective presentation
$G_{1}\lxr G_{0}\lxr M\lxr 0$ of $M$. This induces  the following exact sequences
$$0\lxr (M, E)\lxr (G_{0}, E)\lxr (G_{1}, E)\lxr {\rm Tr}_{G}M\lxr 0$$
and
$$0\lxr \tau_{G}M\lxr D(G_{1}, E)\lxr D(G_{0}, E),$$
with ${\rm Tr}_{G}M\in {\rm A(Gproj)}\mbox{-}{\rm mod}$ and $\tau_{G}M\in {\rm mod}{\rm A(Gproj)}$.
Recall from \cite{G2} that $${\rm Tr}_{G}: {\rm mod}A/Gp(A)\lxr {\rm A(Gproj)}\mbox{-}\underline{{\rm mod}}$$
is called the relative transpose  of $A$, and moreover, ${\rm Tr}_{G}$ is a faithful functor.

\vskip 10pt

\begin{defn}\
\label{rigid}
We say that an $A\mbox{-}$module $M$ is {\bf $\tau_{G}\mbox{-}$rigid} if ${\rm Hom}_{{\rm A(Gproj)}}((E,M), \tau_{G}M)=0$ for the left ${\rm A(Gproj)}\mbox{-}$module ${\rm Hom}_{A}(E,M)$.
\end{defn}

\vskip 10pt

Next we provide some properties for $\tau_{G}\mbox{-}$rigid module. For an $A\mbox{-}$module $M$, put
$$^{\perp_{0}}(\underline{\tau_{G}M}):= \{X\in {\rm mod}{\rm A(Gproj)} \mid \overline{{\rm Hom}}_{{\rm A(Gproj)}}(X, \tau_{G}M)=0 \}.$$

\vskip 10pt

\begin{prop}\
\label{properties}
Let $M$ be an $A\mbox{-}$module and $$G_{1}\stackrel{f}{\lxr} G_{0}\lxr M\lxr 0  \eqno(*)$$ the minimal proper Gorenstein-projective presentation of $M$. Then the following statements hold.
\begin{enumerate}

\item $M$ is $\tau_{G}\mbox{-}$rigid if and only if ${\rm Hom}_{A}(f, M)$ is an epimorphism.

\vskip 5pt

\item $M$ is $\tau_{G}\mbox{-}$rigid if and only if ${\rm Hom}_{A}(E, M)$ is a $\tau\mbox{-}$rigid module, where $\tau$ is the Auslander-Reiten translation of ${\rm A(Gproj)}$.

\vskip 5pt

\item $\tau_{G}M\cong \tau(E, M).$

\vskip 5pt

\item Suppose that $M$ is $\tau_{G}\mbox{-}$rigid. Then ${\rm Hom}_{A}(E, {\rm Gen}_{G}M)\subseteq {^{\perp_{0}}(\underline{\tau_{G}M})}$.
\end{enumerate}
\begin{proof}\ There is an exact sequence
$$0\lxr \tau_{G}M\lxr D(G_{1}, E)\lxr D(G_{0}, E).\eqno(**)$$
Applying ${\rm Hom}_{{\rm A(Gproj)}}(N,-)$ for any ${\rm A(Gproj)}\mbox{-}$module $N$,  we have the following commutative diagram of exact sequences:
\[
\xymatrix@C=15pt{
0\ar[r]^{} & _{{\rm A(Gproj)}}(N, \tau_{G}M)\ar[r]^{} & _{{\rm A(Gproj)}}(N, D(G_{1}, E))\ar[r]^{}\ar[ld]_{\cong} & _{{\rm A(Gproj)}}(N, D(G_{0}, E))\ar[ld]_{\cong}  &  \\
& D_{{\rm A(Gproj)}}((E, G_{1}), N)\ar[r]^{} & D_{{\rm A(Gproj)}}((E, G_{0}), N)\ar[r]^{} & D_{{\rm A(Gproj)}}((E, M), N)\ar[r]^{} & 0. }
\]
Then we get that ${\rm Hom}_{{\rm A(Gproj)}}(N, \tau_{G}M)=0$ if and only if the map
$${\rm Hom}_{{\rm A(Gproj)}}((E, G_{0}), N)\lxr {\rm Hom}_{{\rm A(Gproj)}}((E, G_{1}), N)$$
is an epimorphism.

\vskip 5pt

(1) and (2) Applying ${\rm Hom}_{A}(E, -)$ the $(*)$, we can get the projective resolution of $(E, M)$:
$$(E, G_{1})\stackrel{\sigma}{\lxr} (E, G_{0})\lxr (E, M)\lxr 0.$$
Then we have from above arguments that ${\rm Hom}_{{\rm A(Gproj)}}((E,M), \tau_{G}M)=0$ if and only if ${\rm Hom}_{{\rm A(Gproj)}}(\sigma,$ $ (E,M))$ is epic if and only if ${\rm Hom}_{A}(f, M)$ is epic. Notice that the statement that ${\rm Hom}_{{\rm A(Gproj)}}(\sigma, (E,M))$ is epic means that ${\rm Hom}_{A}(E,M)$ is a partial silting ${\rm A(Gproj)}\mbox{-}$module, equivalently, ${\rm Hom}_{A}(E,M)$ is $\tau\mbox{-}$rigid.

\vskip 5pt

(3) Consider the exact sequence
$$0\lxr \tau(E, M)\lxr  D_{{\rm A(Gproj)}}((E, G_{1}), {\rm A(Gproj)})\lxr D_{{\rm A(Gproj)}}((E, G_{0}), {\rm A(Gproj)}).$$
Then  we get from  above arguments that $\tau_{G}M\cong {\rm Hom}_{{\rm A(Gproj)}}({\rm A(Gproj)}, \tau_{G}M) \cong \tau(E, M)$.

\vskip 5pt

(4) We have proved in (2) that ${\rm Hom}_{A}(E, M)$ is a $\tau\mbox{-}$rigid module. If $Y\in {\rm Gen}_{G}M$, then there is the $G$-exact sequence $M^{(I)}\lxr Y\lxr 0$ for some index set $I$.
Therefore we have that $(E, {\rm Gen}_{G}M)\subseteq {\rm Gen}(E, M) \subseteq (E, M)^{\perp_{1}}$. Since there are the following isomorphisms
$$\begin{aligned}
{\rm Ext}_{{\rm A(Gproj)}}^{1}((E,M),\ X)&\cong D\underline{{\rm Hom}}_{{\rm A(Gproj)}}(\tau^{-1}X,\ (E,M))\\
&\cong D\overline{{\rm Hom}}_{{\rm A(Gproj)}}(X,\ \tau(E,M))\\
&\cong D\overline{{\rm Hom}}_{{\rm A(Gproj)}}(X,\ \tau_{G}M),
\end{aligned}$$
it follows that $X\in (E, M)^{\perp_{1}}$ if and only if $X\in {^{\perp_{0}}(\underline{\tau_{G}M})}$. Hence ${\rm Hom}_{A}(E, {\rm Gen}_{G}M)$ $\subseteq {^{\perp_{0}}(\underline{\tau_{G}M})}$.
\end{proof}
\end{prop}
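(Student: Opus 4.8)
Write $\Lambda := {\rm A(Gproj)} = {\rm End}_A(E)^{op}$, let $H := {\rm Hom}_A(E,-)\colon {\rm mod}A \to {\rm mod}\Lambda$, and let $\nu := D{\rm Hom}_\Lambda(-,\Lambda)$ be the Nakayama functor of $\Lambda$. The whole proposition is a matter of reading off consequences of one commutative diagram with exact rows, so the first step is to produce that diagram. For $C \in {\rm add}_A E$ (in particular for $C = G_0$ and $C = G_1$) the $\Lambda$-modules ${\rm Hom}_A(C,E)$ and $H(C) = (E,C)$ are mutually $\Lambda$-dual projectives; hence $D(C,E) \cong \nu H(C)$, and for every $\Lambda$-module $N$ there is a natural isomorphism ${\rm Hom}_\Lambda(N, D(C,E)) \cong D{\rm Hom}_\Lambda((E,C), N)$, which one verifies for $C = E$ and extends additively over ${\rm add}_A E$. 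Applying ${\rm Hom}_\Lambda(N,-)$ to the sequence $0 \to \tau_G M \to D(G_1,E) \to D(G_0,E)$ and substituting these isomorphisms gives the diagram, whose content I will use repeatedly:
\[
{\rm Hom}_\Lambda(N,\tau_G M) \cong {\rm Ker}\bigl(D{\rm Hom}_\Lambda((E,G_1),N) \to D{\rm Hom}_\Lambda((E,G_0),N)\bigr)
\]
for every $\Lambda$-module $N$; in particular ${\rm Hom}_\Lambda(N,\tau_G M) = 0$ if and only if ${\rm Hom}_\Lambda((E,G_0),N) \to {\rm Hom}_\Lambda((E,G_1),N)$ is epic.

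Next I would push the minimal proper Gorenstein-projective presentation $(*)$ through $H$. Since $E$ is a Gorenstein-projective generator, $H$ is exact on $G$-exact sequences, so applying $H$ to $(*)$ yields an exact sequence $(E,G_1) \xrightarrow{\sigma} (E,G_0) \to (E,M) \to 0$ with both $(E,G_i)$ projective, and minimality of $(*)$ makes $\sigma$ a minimal projective presentation of $(E,M)$ over $\Lambda$. Under the canonical identification ${\rm Hom}_\Lambda((E,G_i),(E,M)) \cong {\rm Hom}_A(G_i,M)$ (again checked for $C = E$ and extended additively) the map ${\rm Hom}_\Lambda(\sigma,(E,M))$ is ${\rm Hom}_A(f,M)$. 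Part (1) then follows from the displayed equivalence with $N = (E,M)$: $M$ is $\tau_G$-rigid, that is ${\rm Hom}_\Lambda((E,M),\tau_G M) = 0$, if and only if ${\rm Hom}_\Lambda(\sigma,(E,M))$ is epic, if and only if ${\rm Hom}_A(f,M)$ is epic. For Part (2) I would invoke the standard fact that a $\Lambda$-module $U$ with minimal projective presentation $p$ is $\tau$-rigid --- equivalently, a partial silting $\Lambda$-module --- exactly when ${\rm Hom}_\Lambda(p,U)$ is surjective; for $U = (E,M)$ and $p = \sigma$ this is precisely the condition in Part (1).

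For Part (3) I would specialise the displayed isomorphism to $N = \Lambda$: it gives $\tau_G M = {\rm Hom}_\Lambda(\Lambda,\tau_G M) \cong {\rm Ker}\bigl(\nu(E,G_1) \to \nu(E,G_0)\bigr)$, and since $\sigma$ is a minimal projective presentation of $(E,M)$ this kernel is by definition $\tau(E,M)$. For Part (4), assume $M$ is $\tau_G$-rigid, so $(E,M)$ is $\tau$-rigid by (2). For $Y \in {\rm Gen}_G M$ a $G$-exact epimorphism $M^{(I)} \to Y$ is carried by $H$ to an epimorphism $(E,M)^{(I)} \to (E,Y)$, hence $(E,Y) \in {\rm Gen}(E,M)$; since ${\rm Gen}(U) \subseteq U^{\perp_1}$ for $\tau$-rigid $U$, we get $(E,Y) \in (E,M)^{\perp_1}$. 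The Auslander--Reiten formulas, together with (3), give the chain
\[
{\rm Ext}^1_\Lambda((E,M),X) \cong D\underline{{\rm Hom}}_\Lambda(\tau^{-1}X,(E,M)) \cong D\overline{{\rm Hom}}_\Lambda(X,\tau(E,M)) \cong D\overline{{\rm Hom}}_\Lambda(X,\tau_G M),
\]
so $X \in (E,M)^{\perp_1}$ if and only if $X \in {}^{\perp_0}(\underline{\tau_G M})$; applied to $X = (E,Y)$ this yields ${\rm Hom}_A(E,{\rm Gen}_G M) \subseteq {}^{\perp_0}(\underline{\tau_G M})$.

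The step I expect to be the main obstacle is the very first one: correctly fixing the $A$-$\Lambda$-bimodule structure of $E$, verifying that ${\rm Hom}_A(C,E)$ and $(E,C)$ are genuinely mutually $\Lambda$-dual projective modules, and checking that the duality isomorphisms are natural enough for the square to commute --- and, relatedly, that $H$ carries the minimal proper Gorenstein-projective presentation to a \emph{minimal} projective presentation, which is what legitimises applying the $\tau$-rigid criterion to $\sigma$. Once that diagram is in hand, Parts (1)--(4) are essentially formal: choose $N$ appropriately in the displayed isomorphism and quote the standard facts about $\tau$-rigid (equivalently, partial silting) modules and the Auslander--Reiten formula.
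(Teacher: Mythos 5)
Your proposal follows essentially the same route as the paper: the same key commutative diagram obtained by applying ${\rm Hom}_{\Lambda}(N,-)$ to $0\to\tau_G M\to D(G_1,E)\to D(G_0,E)$ and dualising, then specialising $N=(E,M)$ for (1)--(2), $N=\Lambda$ for (3), and the identical Auslander--Reiten formula chain for (4). If anything you are slightly more careful than the paper in flagging the naturality of the duality isomorphisms and in noting that minimality of $(*)$ must be transported to minimality of $\sigma$, which the paper uses implicitly.
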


\vskip 10pt

\begin{thm}\
\label{equivalence}
Let $M$ be an $A$-module. Then $M$ is $\tau_{G}$-rigid if and only if $M$ is  a partial Gorenstein silting $A$-module.
\begin{proof}\ Let $G_{1}\stackrel{\theta}{\lxr} G_{0}\lxr M\lxr 0$ be the minimal proper Gorenstein-projective presentation of $M$. Since
$G_{0}$ and $G_{1}$ are finitely generated, it follows that $D_{\theta}$ is closed under coproducts. Then we get from Lemma~\ref{silting}
that $D_{\theta}$ is a relative torsion class. On the other hand, we get from Proposition~\ref{properties}(1) that $M$ is $\tau_{G}$-rigid if and only if
${\rm Hom}_{A}(\theta, M)$ is surjective. The latter implies that $M\in D_{\theta}$. Thus $M$ is $\tau_{G}$-rigid if and only if $M$ is  partial Gorenstein silting.
\end{proof}
\end{thm}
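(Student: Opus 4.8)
The plan is to use the characterisation of $\tau_G$-rigidity already obtained in Proposition~\ref{properties}(1) together with the elementary closure properties of $D_\theta$ from Lemma~\ref{silting}, so that the theorem becomes essentially a matching-up of the two definitions. I would begin by fixing the minimal proper Gorenstein-projective presentation $G_1\xrightarrow{\theta}G_0\lxr M\lxr 0$ of $M$, which exists because $A$ is of finite CM-type (so $\mathrm{Gproj}\,A$ is functorially finite in $\mathrm{mod}\,A$); this is the presentation with respect to which we will test both conditions.

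Next I would observe that the two conditions defining ``partial Gorenstein silting with respect to $\theta$'' split cleanly. Condition (Gs1) asks that $D_\theta$ be a relative torsion class, i.e.\ closed under $G$-epimorphic images, $G$-extensions and coproducts. Lemma~\ref{silting}(i) already gives closure under $G$-epimorphic images and $G$-extensions (and products); the only extra point is closure under coproducts. Since $G_0$ and $G_1$ are finitely generated (we are in $\mathrm{mod}\,A$), the functor $\mathrm{Hom}_A(\theta,-)$ commutes with arbitrary coproducts, hence a coproduct of modules on which $\mathrm{Hom}_A(\theta,-)$ is epic is again such a module; so $D_\theta$ is closed under coproducts. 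Therefore (Gs1) holds \emph{automatically} for this $\theta$, regardless of $M$. This reduces the equivalence to: $M$ is $\tau_G$-rigid $\iff$ $M$ satisfies (Gs2) with respect to $\theta$, i.e.\ $M\in D_\theta$.

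Finally I would close the loop by unwinding $M\in D_\theta$. By definition $M\in D_\theta$ means precisely that $\mathrm{Hom}_A(\theta,M)\colon \mathrm{Hom}_A(G_0,M)\to\mathrm{Hom}_A(G_1,M)$ is epic. But Proposition~\ref{properties}(1) states exactly that $M$ is $\tau_G$-rigid iff $\mathrm{Hom}_A(\theta,M)$ (there written $\mathrm{Hom}_A(f,M)$) is an epimorphism. So $M\in D_\theta \iff M$ is $\tau_G$-rigid, and combined with the fact that (Gs1) is satisfied for free, this yields: $M$ is $\tau_G$-rigid $\iff$ $M$ is partial Gorenstein silting. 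One should note that the definition of partial Gorenstein silting only requires the \emph{existence} of some proper Gorenstein-projective presentation making (Gs1)–(Gs2) hold; since our minimal $\theta$ works whenever $M$ is $\tau_G$-rigid, and conversely any presentation witnessing partial Gorenstein silting must in particular have $M\in D_\theta$, which forces $\mathrm{Hom}_A(\theta,M)$ epic and hence (via the cokernel identification and Proposition~\ref{properties}(1) applied to the minimal presentation, using that all proper Gorenstein-projective presentations of $M$ differ by projective-summand direct sums) $\tau_G$-rigidity, both implications go through.

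The only genuine subtlety — and the step I would be most careful about — is the direction ``partial Gorenstein silting $\Rightarrow$ $\tau_G$-rigid'' when the witnessing presentation $\theta'$ is \emph{not} the minimal one: one needs that $M\in D_{\theta'}$ still forces $\mathrm{Hom}_A(f,M)$ epic for the minimal $f$. This follows because any proper Gorenstein-projective presentation of $M$ is, up to isomorphism, the minimal one plus a summand of the form $G\xrightarrow{\mathrm{id}}G$ (together with possibly a split-injective piece $0\to G'$), and such trivial summands do not affect whether $\mathrm{Hom}_A(-,M)$ applied to the differential is surjective. Apart from this bookkeeping, the argument is short; everything hard has already been done in Lemma~\ref{silting} and Proposition~\ref{properties}.
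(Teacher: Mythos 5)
Your proposal is correct and follows essentially the same route as the paper: reduce (Gs1) to Lemma~\ref{silting} plus closure under coproducts (automatic since $G_0,G_1$ are finitely generated), and identify (Gs2) with surjectivity of ${\rm Hom}_A(\theta,M)$ via Proposition~\ref{properties}(1). The only difference is that you explicitly handle the existential quantifier over presentations in the definition of partial Gorenstein silting (reducing an arbitrary witnessing presentation to the minimal one by splitting off trivial summands), a point the paper's proof leaves implicit; this is a legitimate and worthwhile clarification rather than a divergence in method.
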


\vskip 10pt

Recall from \cite{DIJ} that an  algebra $A$ is called $\tau\mbox{-}$tilting finite if it admits finite number of isomorphism classes of indecomposable $\tau\mbox{-}$rigid modules. Inspired on it, we introduce $\tau_{G}\mbox{-}$tilting finite algebras.

\vskip 10pt

\begin{defn} \ An algebra $A$  is called $\tau_{G}\mbox{-}$tilting finite if it admits finite number of isomorphism classes of indecomposable $\tau_{G}\mbox{-}$rigid modules.
\end{defn}

\vskip 10pt

\begin{cor}\ If ${\rm A(Gproj)}$ is $\tau\mbox{-}$tilting finite, then $A$ is $\tau_{G}\mbox{-}$tilting finite, and further has finite number of isomorphism classes of indecomposable partial Gorenstein silting modules.
\begin{proof} \ By Proposition~\ref{properties}(1), we know that an $A$-module $M$ is $\tau_{G}\mbox{-}$rigid if and only if ${\rm Hom}_{A}(E, M)$ is a $\tau\mbox{-}$rigid module. This implies the result by Theorem~\ref{equivalence}.
\end{proof}
\end{cor}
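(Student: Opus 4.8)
The approach is to transfer the counting to the CM-Auslander algebra ${\rm A(Gproj)}$ along the functor ${\rm Hom}_A(E,-)$: by Proposition~\ref{properties}(2) this functor detects $\tau_G$-rigidity, so one wants to upgrade that pointwise statement to a statement about \emph{indecomposables up to isomorphism}. Concretely, I would show that $M\mapsto{\rm Hom}_A(E,M)$ sends indecomposable $\tau_G$-rigid $A$-modules to indecomposable $\tau$-rigid ${\rm A(Gproj)}$-modules and is injective on isomorphism classes; then $\tau$-tilting finiteness of ${\rm A(Gproj)}$ forces finiteness of the indecomposable $\tau_G$-rigid $A$-modules, and Theorem~\ref{equivalence} transfers this to partial Gorenstein silting modules.

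First I would record that, as $E$ is an additive generator of ${\rm Gproj}A$ and ${\rm A(Gproj)}={\rm End}_A(E)^{op}$, the functor ${\rm Hom}_A(E,-)$ restricts to the projectivization equivalence ${\rm Gproj}A\xrightarrow{\ \sim\ }{\rm proj}\,{\rm A(Gproj)}$. Then, for $M\in{\rm mod}A$ with minimal proper Gorenstein-projective presentation $G_1\xrightarrow{f}G_0\lxr M\lxr 0$, applying ${\rm Hom}_A(E,-)$ and using the defining exactness of a proper presentation (taking $G=E$) yields an exact sequence ${\rm Hom}_A(E,G_1)\lxr{\rm Hom}_A(E,G_0)\lxr{\rm Hom}_A(E,M)\lxr 0$, i.e.\ a projective presentation of the ${\rm A(Gproj)}$-module ${\rm Hom}_A(E,M)$; minimality of the original presentation carries over to it via the equivalence.

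From this presentation I would derive two things. (i) An endomorphism of ${\rm Hom}_A(E,M)$ lifts along its projective presentation to a pair of endomorphisms of ${\rm Hom}_A(E,G_0)$ and ${\rm Hom}_A(E,G_1)$; transporting these back through the equivalence on ${\rm Gproj}A$ and passing to cokernels produces an endomorphism of $M$, and one checks this gives an inverse to the natural map ${\rm End}_A(M)\lxr{\rm End}_{{\rm A(Gproj)}}({\rm Hom}_A(E,M))$ --- injectivity of the latter only using that the epimorphism $G_0\lxr M$ exhibits $M$ as a quotient of some $E^{k}$. Hence ${\rm End}_A(M)\cong{\rm End}_{{\rm A(Gproj)}}({\rm Hom}_A(E,M))$, so $M$ is indecomposable iff ${\rm Hom}_A(E,M)$ is. (ii) A minimal projective presentation is unique up to isomorphism, and through the equivalence it reconstructs $f$ up to isomorphism and hence $M\cong{\rm coker}(f)$; therefore $M\mapsto{\rm Hom}_A(E,M)$ is injective on isomorphism classes of ${\rm mod}A$.

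Combining (i), (ii) and Proposition~\ref{properties}(2), $M\mapsto{\rm Hom}_A(E,M)$ is an injection from the isomorphism classes of indecomposable $\tau_G$-rigid $A$-modules into those of indecomposable $\tau$-rigid ${\rm A(Gproj)}$-modules. If ${\rm A(Gproj)}$ is $\tau$-tilting finite, the latter set is finite, hence so is the former, which is exactly the statement that $A$ is $\tau_G$-tilting finite. By Theorem~\ref{equivalence} the indecomposable partial Gorenstein silting $A$-modules coincide with the indecomposable $\tau_G$-rigid ones, so there are only finitely many of those as well. I expect the main obstacle to be steps (i)--(ii), i.e.\ checking that the proper Gorenstein-projective presentation transports to a \emph{minimal} projective presentation and that indecomposability (equivalently, the endomorphism ring) is preserved under ${\rm Hom}_A(E,-)$; this preservation is essential, since otherwise the modules ${\rm Hom}_A(E,M)$ could carry unbounded multiplicities of the finitely many indecomposable $\tau$-rigid summands and finiteness would not follow.
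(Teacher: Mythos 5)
Your proof is correct and follows essentially the same route as the paper's one-line argument (transfer $\tau_G$-rigidity along ${\rm Hom}_A(E,-)$ via Proposition~\ref{properties}, then identify $\tau_G$-rigid with partial Gorenstein silting via Theorem~\ref{equivalence}); the point you elaborate at length --- that the transfer respects indecomposability and isomorphism classes --- is precisely what the paper leaves implicit. Your steps (i)--(ii) amount to the standard fact that ${\rm Hom}_A(E,-)$ is fully faithful because $E$ is a generator (as ${\rm proj}A\subseteq{\rm Gproj}A={\rm add}E$), a fact the paper itself invokes later; since a fully faithful functor reflects isomorphisms and induces isomorphisms on endomorphism rings, you could replace the reconstruction via minimal presentations in (ii) by a direct appeal to this.
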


\vskip 10pt

\section{\bf 2-term Gorenstein silting complexes}

\vskip 5pt

In this section, we show that the 2-term Gorenstein silting complex is in bijection  with the Gorenstein silting module.
Then we characterise it by the connection with the t-structure and torsion pair. We also characterise the global dimension of endomorphism algebras of 2-term Gorenstein silting complexes over an algebra $A$ by terms of the Gorenstein dimension of $A$. Throughout, we denote by $A$ a finite dimensional $k$-algebra over a field $k$.

\vskip 10pt

\subsection{2-term Gorenstein silting complexes}\ In this subsection, we study  2-term Gorenstein silting complexes, and show the links with t-structures and torsion pairs. The Brenner-Butler theorem  is given.
\vskip 10pt

\begin{defn}\ {\rm (\cite{CW})}
\label{Gscomplex}
Let  $G^{\bullet}: G_{1}\stackrel{d^{1}}{\lxr} G_{0}$ be a complex in $D_{gp}^{b}(A)$ with $G_{i}\in {\rm Gproj}A$ for $i=0,1$. We say that $G^{\bullet}$ is

\vskip 5pt

$\bullet$ \ {\bf 2-term partial Gorenstein silting} if it satisfies the following two conditions:

\vskip 5pt

{\rm (i)} $G_{1}\lxr {\rm Im}d^{1}$ and $G_{0}\lxr {\rm Coker}d^{1}$ are right ${\rm Gproj}A$-approximations;

\vskip 5pt

{\rm (ii)} ${\rm Hom}_{D_{gp}^{b}(A)}(G^{\bullet}, G^{\bullet}[1])=0$;

\vskip 5pt

$\bullet$ \ {\bf 2-term Gorenstein silting} in $K^{b}({\rm Gproj}A)$ if it is a  2-term partial Gorenstein silting complex and ${\rm thick}G^{\bullet}= K^{b}({\rm Gproj}A)$.
\end{defn}

\vskip 10pt

Let $G^{\bullet}: G_{1}\stackrel{d^{1}}{\lxr} G_{0}$ be a complex in $D_{gp}^{b}(A)$ with $G_{i}\in {\rm Gproj}A$ for $i=0,1$. Consider the subcategories of ${\rm mod}A:$
$$\mathcal{T}(G^{\bullet})=\{X\in {\rm mod}A\mid {\rm Hom}_{D_{gp}^{b}(A)}(G^{\bullet}, X[1])=0\}$$
and
$$\mathcal{F}(G^{\bullet})=\{Y\in {\rm mod}A\mid {\rm Hom}_{D_{gp}^{b}(A)}(G^{\bullet}, Y)=0\}.$$

\vskip 10pt

Assume that $G^{\bullet}: G_{1}\lxr G_{0}$ is a 2-term Gorenstein silting complex in  $K^{b}({\rm Gproj}A)$. Consider the subcategories of $D_{gp}^{b}(A)$:
$$D^{\leq 0}_{gp}(G^{\bullet})=\{X^{\bullet}\in D_{gp}^{b}(A)\mid {\rm Hom}_{D_{gp}^{b}(A)}(G^{\bullet}, X^{\bullet}[i])=0, \ for \ i>0\}$$
and
$$D^{\geq 0}_{gp}(G^{\bullet})=\{X^{\bullet}\in D_{gp}^{b}(A)\mid {\rm Hom}_{D_{gp}^{b}(A)}(G^{\bullet}, X^{\bullet}[i])=0, \ for \ i<0\}$$
Then we have the following facts.

\vskip 10pt

\begin{lem}\ Let $G^{\bullet}$ be a 2-term Gorenstein silting complex in $K^{b}({\rm Gproj}A)$. Then
\begin{enumerate}

\item $(D^{\leq 0}_{gp}(G^{\bullet}),\ D^{\geq 0}_{gp}(G^{\bullet}))$ is a t-structure in $D_{gp}^{b}(A)$.

\vskip 5pt

\item $\mathcal{T}(G^{\bullet})=D^{\leq 0}_{gp}(G^{\bullet})\cap {\rm mod}A$ and $\mathcal{F}(G^{\bullet})=D^{\geq 1}_{gp}(G^{\bullet})\cap {\rm mod}A$.
\end{enumerate}
\end{lem}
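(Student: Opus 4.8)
The plan is to treat the two assertions separately, both by reducing to the standard ``silting $\Rightarrow$ t-structure'' and ``silting complex cuts out a torsion pair'' arguments, but carried out internally in the Gorenstein derived category $D_{gp}^{b}(A)$ rather than in $D^{b}(A)$. For part (1), I would first verify the two closure axioms: $D^{\leq 0}_{gp}(G^{\bullet})[1]\subseteq D^{\leq 0}_{gp}(G^{\bullet})$ and $D^{\geq 0}_{gp}(G^{\bullet})[-1]\subseteq D^{\geq 0}_{gp}(G^{\bullet})$, which are immediate from the definitions (shifting $X^{\bullet}$ just reindexes the vanishing condition on $\mathrm{Hom}_{D_{gp}^{b}(A)}(G^{\bullet},X^{\bullet}[i])$). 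Next, the orthogonality $\mathrm{Hom}_{D_{gp}^{b}(A)}(X^{\bullet},Y^{\bullet}[-1])=0$ for $X^{\bullet}\in D^{\leq 0}_{gp}(G^{\bullet})$, $Y^{\bullet}\in D^{\geq 0}_{gp}(G^{\bullet})$: here I would use that $G^{\bullet}$ is 2-term and that $D_{gp}^{b}(A)$ is generated (as a thick subcategory) by $G^{\bullet}$, so that $D^{\leq 0}_{gp}(G^{\bullet})$ is the smallest cocomplete preaisle containing $G^{\bullet}$; the condition $\mathrm{Hom}_{D_{gp}^{b}(A)}(G^{\bullet},G^{\bullet}[1])=0$ (rigidity) together with $G^{\bullet}$ being concentrated in degrees $0,1$ gives the orthogonality by dévissage from $G^{\bullet}$. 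The real content is the decomposition axiom: for every $C\in D_{gp}^{b}(A)$ a triangle $X\to C\to Y[-1]\to X[1]$ with $X\in D^{\leq 0}_{gp}(G^{\bullet})$, $Y\in D^{\geq 0}_{gp}(G^{\bullet})$.

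For that truncation step I would mimic the construction of the silting t-structure: since $\mathrm{thick}\,G^{\bullet}=K^{b}(\mathrm{Gproj}A)$ and $D_{gp}^{b}(A)$ is (by the setup in the excerpt) equivalent to $K^{b}(\mathrm{Gproj}A)$, every object of $D_{gp}^{b}(A)$ is represented by a bounded complex of Gorenstein-projectives, and using condition (i) of Definition~\ref{Gscomplex} — that $G_{1}\to\mathrm{Im}\,d^{1}$ and $G_{0}\to\mathrm{Coker}\,d^{1}$ are right $\mathrm{Gproj}A$-approximations — one builds, degreewise, the truncation of a Gorenstein-projective complex against $\mathrm{Add}\,G^{\bullet}$. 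Concretely, starting from a minimal complex of Gorenstein-projectives I would apply the approximation property to successively replace it by a complex built from copies of $G_{0}$ and $G_{1}$ in the appropriate degrees, producing the triangle; the rigidity condition (ii) guarantees the resulting $X$ and $Y$ land in the prescribed aisles. This is the step I expect to be the main obstacle, since it requires the approximation statement in (i) to be strong enough to run the induction on the width of the complex, and care is needed because one is working modulo $\mathcal{GP}$-acyclic complexes rather than with honest complexes.

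For part (2), the inclusion $\mathcal{T}(G^{\bullet})\subseteq D^{\leq 0}_{gp}(G^{\bullet})\cap\mathrm{mod}A$ is the easy direction: if $X\in\mathrm{mod}A$ with $\mathrm{Hom}_{D_{gp}^{b}(A)}(G^{\bullet},X[1])=0$, then since $G^{\bullet}$ is 2-term (degrees $0,1$) and $X$ is a module (degree $0$), the groups $\mathrm{Hom}_{D_{gp}^{b}(A)}(G^{\bullet},X[i])$ for $i\geq 2$ vanish automatically for degree reasons, so only $i=1$ matters, giving $X\in D^{\leq 0}_{gp}(G^{\bullet})$. Conversely, if $X\in D^{\leq 0}_{gp}(G^{\bullet})\cap\mathrm{mod}A$ then by definition $\mathrm{Hom}_{D_{gp}^{b}(A)}(G^{\bullet},X[i])=0$ for all $i>0$, in particular for $i=1$, so $X\in\mathcal{T}(G^{\bullet})$. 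The identity $\mathcal{F}(G^{\bullet})=D^{\geq 1}_{gp}(G^{\bullet})\cap\mathrm{mod}A$ is symmetric: for a module $X$, the groups $\mathrm{Hom}_{D_{gp}^{b}(A)}(G^{\bullet},X[i])$ with $i\leq -1$ vanish for degree reasons (again because $G^{\bullet}$ sits in degrees $0,1$ and $X$ in degree $0$), so membership in $D^{\geq 1}_{gp}(G^{\bullet})$ — i.e. vanishing of $\mathrm{Hom}_{D_{gp}^{b}(A)}(G^{\bullet},X[i])$ for all $i<1$, equivalently $i\leq 0$ — reduces to the single condition $\mathrm{Hom}_{D_{gp}^{b}(A)}(G^{\bullet},X)=0$, which is exactly the defining condition of $\mathcal{F}(G^{\bullet})$. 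Thus (2) follows from a short degree-counting argument once the Hom-groups in $D_{gp}^{b}(A)$ between a 2-term Gorenstein-projective complex and a stalk complex are understood, and the only subtlety is to justify the vanishing of the out-of-range Hom-groups, which I would deduce from the fact that $D_{gp}^{b}(A)\simeq K^{b}(\mathrm{Gproj}A)$ so that such Hom-groups are computed by genuine chain maps up to homotopy.
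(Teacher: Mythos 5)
Your proposal is correct and follows essentially the same route as the paper: for part (1) the paper offers no argument of its own but simply cites \cite[Section 3.3]{KY}, and the construction you sketch (rigidity plus $\mathrm{thick}\,G^{\bullet}=K^{b}(\mathrm{Gproj}A)$, with truncation triangles built from iterated right $\mathrm{add}\,G^{\bullet}$-approximations) is precisely the argument that citation covers, so the truncation step you flag as the main obstacle is exactly the part the paper outsources; note only that what drives it is rigidity and generation, not condition (i) of Definition~\ref{Gscomplex}, which concerns the differential of $G^{\bullet}$ being an approximation and plays no role here. For part (2) the paper says only ``by definitions,'' and your degree-counting argument in $K^{b}(\mathrm{Gproj}A)$ (the out-of-range Hom-groups between the 2-term complex and a stalk complex vanish, leaving the single conditions $i=1$ and $i=0$) is the correct way to fill that in.
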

\begin{proof}\ Conclusion (1) can be obtained from \cite[Section 3.3]{KY}. (2) can be obtained by definitions.
\end{proof}

\vskip 10pt

Next we consider the relations between 2-term Gorenstein silting complexes, t-structures and  torsion pairs in module categories. A key lemma is given below.

\vskip 10pt

\begin{lem}\
\label{exactseq}\
For any $X^{\bullet}\in D_{gp}^{b}(A)$ and $n\in \mathbb{Z}$, we have a functorial exact sequence
$$0\lxr {\rm Hom}_{D_{gp}^{b}(A)}(G^{\bullet},\ H^{n-1}(X^{\bullet})[1])\to {\rm Hom}_{D_{gp}^{b}(A)}(G^{\bullet},\ X^{\bullet}[n])$$
$$\lxr {\rm Hom}_{D_{gp}^{b}(A)}(G^{\bullet},\ H^{n}(X^{\bullet}))\lxr 0.$$
\end{lem}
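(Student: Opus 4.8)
The plan is to reduce the statement to the case where $X^{\bullet}$ is a shift of a single module, and then to recover the general case by dévissage along the cohomology of $X^{\bullet}$.

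\emph{Step 1: the two-term complex $G^{\bullet}$ against a module.} The brutal truncation of $G^{\bullet}$ is degree-wise split, so it gives a genuine triangle $G_{1}\xrightarrow{d^{1}}G_{0}\to G^{\bullet}\to G_{1}[1]$ in $D_{gp}^{b}(A)$. Fix $M\in\mathrm{mod}A$ and apply $\mathrm{Hom}_{D_{gp}^{b}(A)}(-,M[i])$. Each $G_{j}$ is Gorenstein-projective, hence a ``projective-like'' object of $D_{gp}^{b}(A)$: by the description of morphisms in the Gorenstein derived category (\cite{GZ}) one has $\mathrm{Hom}_{D_{gp}^{b}(A)}(G_{j},M[i])=0$ for $i\neq 0$ --- for $i>0$ this is $\mathrm{Gext}_{A}^{i}(G_{j},M)$, which vanishes because $0\to G_{j}\to G_{j}\to 0$ is a proper Gorenstein-projective resolution of $G_{j}$, and for $i<0$ it vanishes trivially --- while $\mathrm{Hom}_{D_{gp}^{b}(A)}(G_{j},M)=\mathrm{Hom}_{A}(G_{j},M)$. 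Feeding this into the long exact sequence of the triangle yields $\mathrm{Hom}_{D_{gp}^{b}(A)}(G^{\bullet},M[i])=0$ for $i\notin\{0,1\}$, together with $\mathrm{Hom}_{D_{gp}^{b}(A)}(G^{\bullet},M)=\ker\big((d^{1})^{*}\big)$ and $\mathrm{Hom}_{D_{gp}^{b}(A)}(G^{\bullet},M[1])=\mathrm{coker}\big((d^{1})^{*}\big)$, where $(d^{1})^{*}\colon\mathrm{Hom}_{A}(G_{0},M)\to\mathrm{Hom}_{A}(G_{1},M)$. This vanishing ``outside a two-step window'' is the homological incarnation of $G^{\bullet}$ being two-term, and it already proves the lemma when $X^{\bullet}$ is a stalk module (and its shifts).

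\emph{Step 2: dévissage.} For an arbitrary $X^{\bullet}$, represent it by a bounded complex of $A$-modules and run the hyper-Ext spectral sequence $E_{2}^{p,q}=\mathrm{Hom}_{D_{gp}^{b}(A)}(G^{\bullet},H^{q}(X^{\bullet})[p])\Rightarrow\mathrm{Hom}_{D_{gp}^{b}(A)}(G^{\bullet},X^{\bullet}[p+q])$ coming from the tower of truncation triangles of $X^{\bullet}$. By Step 1, $E_{2}^{p,q}=0$ unless $p\in\{0,1\}$, so the spectral sequence is concentrated in two columns; every higher differential then necessarily vanishes, it degenerates at $E_{2}$, and for each $n$ one gets a short exact sequence $0\to E_{2}^{1,n-1}\to\mathrm{Hom}_{D_{gp}^{b}(A)}(G^{\bullet},X^{\bullet}[n])\to E_{2}^{0,n}\to 0$, which is exactly the asserted sequence. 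Equivalently, avoiding spectral sequences, one argues by induction on the number of nonzero terms of $X^{\bullet}$, applying $\mathrm{Hom}_{D_{gp}^{b}(A)}(G^{\bullet},-[n])$ to the brutal-truncation triangle $X^{b}[-b]\to X^{\bullet}\to\sigma^{\leq b-1}X^{\bullet}\to$ and collapsing the resulting long exact sequence against the cohomology long exact sequence using the window from Step 1. Functoriality in $X^{\bullet}$ is automatic since every ingredient is functorial.

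\emph{The main obstacle.} The delicate point is making this dévissage legitimate inside $D_{gp}^{b}(A)$. Unlike $D^{b}(A)$, the Gorenstein derived category need not carry a $t$-structure with heart $\mathrm{mod}A$ and cohomology the ordinary $H^{n}$, because the short exact sequences $0\to\ker d^{n}\to X^{n}\to\mathrm{im}\,d^{n}\to 0$ that slice a complex of modules into its cohomology need not be $G$-exact, so canonical truncation triangles do not, a priori, descend to $D_{gp}^{b}(A)$. One must therefore either choose a representative of $X^{\bullet}$ for which the relevant exact sequences are $G$-exact (so that the truncation triangles become honest triangles in $D_{gp}^{b}(A)$), or else organise the whole argument through brutal truncations of $G^{\bullet}$ --- which are always degree-wise split --- and then verify term by term that the graded pieces of the induced two-step filtration of $\mathrm{Hom}_{D_{gp}^{b}(A)}(G^{\bullet},X^{\bullet}[n])$ really are $\mathrm{Hom}_{D_{gp}^{b}(A)}(G^{\bullet},H^{n}(X^{\bullet}))$ and $\mathrm{Hom}_{D_{gp}^{b}(A)}(G^{\bullet},H^{n-1}(X^{\bullet})[1])$, rather than the coarser ``hypercohomology'' groups $H^{\bullet}\mathrm{Hom}_{A}(G_{j},X^{\bullet})$; it is precisely here that the two-term hypothesis on $G^{\bullet}$ together with the approximation conditions built into Definition~\ref{Gscomplex} have to be used.
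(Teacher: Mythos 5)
Your Step 1 is correct, and your ``fallback'' route --- applying $\mathrm{Hom}_{D_{gp}^{b}(A)}(-,X^{\bullet}[n])$ to the degree-wise split triangle $G_{1}\to G_{0}\to G^{\bullet}\to G_{1}[1]$ --- is exactly the route the paper takes: the paper's proof produces the short exact sequence with outer terms $\mathrm{Coker}(\mathrm{Hom}_{D_{gp}^{b}(A)}(d^{1},X^{\bullet}[n-1]))$ and $\mathrm{Ker}(\mathrm{Hom}_{D_{gp}^{b}(A)}(d^{1},X^{\bullet}[n]))$ and then identifies these with $\mathrm{Hom}_{D_{gp}^{b}(A)}(G^{\bullet},H^{n-1}(X^{\bullet})[1])$ and $\mathrm{Hom}_{D_{gp}^{b}(A)}(G^{\bullet},H^{n}(X^{\bullet}))$ in one step. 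Your primary route (the canonical-truncation tower of $X^{\bullet}$ and the ensuing two-column spectral sequence) does not work as stated, for precisely the reason you give yourself: the sequences $0\to Z^{n}\to X^{n}\to B^{n+1}\to 0$ and $0\to B^{n}\to Z^{n}\to H^{n}(X^{\bullet})\to 0$ need not be $G$-exact, so the truncation triangles need not descend to $D_{gp}^{b}(A)$, and the $E_{2}$-page you write down is not available.

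The genuine gap is that neither your fallback sketch nor the paper actually proves the identification on which this route rests. Since each $G_{j}\in\mathrm{Gproj}A$ satisfies $\mathrm{Hom}_{K^{b}(A)}(G_{j},C^{\bullet})=H^{0}\mathrm{Hom}_{A}(G_{j},C^{\bullet})=0$ for every $\mathcal{GP}$-acyclic $C^{\bullet}$, one gets $\mathrm{Hom}_{D_{gp}^{b}(A)}(G_{j},X^{\bullet}[n])\cong H^{n}\mathrm{Hom}_{A}(G_{j},X^{\bullet})$, i.e.\ the ``hypercohomology'' group; what is needed is the finer statement that the natural map $H^{n}\mathrm{Hom}_{A}(G_{j},X^{\bullet})\to\mathrm{Hom}_{A}(G_{j},H^{n}(X^{\bullet}))$ is an isomorphism, naturally in $d^{1}$. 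This is automatic when $G_{j}$ is projective, but for a merely Gorenstein-projective $G_{j}$ its surjectivity requires every map $G_{j}\to H^{n}(X^{\bullet})$ to lift through $Z^{n}\to H^{n}(X^{\bullet})$, and its injectivity requires maps $G_{j}\to B^{n}$ to lift through $X^{n-1}\to B^{n}$ --- i.e.\ it requires the cocycle/coboundary sequences of (a suitable representative of) $X^{\bullet}$ to be $G$-exact. Your hope that the two-term hypothesis and the approximation conditions of Definition~\ref{Gscomplex} will supply this cannot work: the isomorphism in question pits a single Gorenstein-projective module against $X^{\bullet}$ and is insensitive to any silting or approximation property of $G^{\bullet}$; what is missing is a hypothesis on, or a preferred representative of, the \emph{target} $X^{\bullet}$ (it does hold for stalk complexes, which is why your Step 1 is fine, and for complexes satisfying condition (i) of Definition~\ref{Gscomplex}). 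So your proposal correctly locates the crux but does not close it; be aware that the paper's own proof asserts exactly this identification, $\mathrm{Ker}(\mathrm{Hom}_{D_{gp}^{b}(A)}(d^{1},X^{\bullet}[n]))\cong\mathrm{Ker}(\mathrm{Hom}_{A}(d^{1},H^{n}(X^{\bullet})))$, without justification.
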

\begin{proof}\ For $X^{\bullet}[n]\in D_{gp}^{b}(A)$, applying ${\rm Hom}_{D_{gp}^{b}(A)}(-, X^{\bullet}[n])$ to a distinguished triangle
$$G_{1}\stackrel{d^{1}}{\lxr} G_{0}\lxr G^{\bullet}\lxr G_{1}[1],$$
we have a short exact sequence
$$0\lxr {\rm Coker}({\rm Hom}_{D_{gp}^{b}(A)}(d^{1},\ X^{\bullet}[n-1]))\lxr {\rm Hom}_{D_{gp}^{b}(A)}(G^{\bullet},\ X^{\bullet}[n])$$
$$\lxr {\rm Ker}({\rm Hom}_{D_{gp}^{b}(A)}(d^{1},\ X^{\bullet}[n]))\lxr 0.$$
Since
$$\begin{aligned}
{\rm Ker}({\rm Hom}_{D_{gp}^{b}(A)}(d^{1},\ X^{\bullet}[n]))&\cong {\rm Ker}({\rm Hom}_{A}(d^{1},\ H^{n}(X^{\bullet})))\\
&\cong {\rm Hom}_{D_{gp}^{b}(A)}(G^{\bullet},\ H^{n}(X^{\bullet})),
\end{aligned}$$
and
$$\begin{aligned}
{\rm Coker}({\rm Hom}_{D_{gp}^{b}(A)}(d^{1},\ X^{\bullet}[n-1]))&\cong {\rm Coker}({\rm Hom}_{A}(d^{1},\ H^{n-1}(X^{\bullet})))\\
&\cong {\rm Hom}_{D_{gp}^{b}(A)}(G^{\bullet},\ H^{n-1}(X^{\bullet})[1]),
\end{aligned}$$
we get the desired exact sequence.
\end{proof}

\vskip 10pt

\begin{prop}\
\label{heart}
Let $G^{\bullet}$ be a 2-term Gorenstein silting complex in $K^{b}({\rm Gproj}A)$, and  $\mathcal{C}_{gp}(G^{\bullet}):=D_{gp}^{\leq 0}(G^{\bullet})\cap D_{gp}^{\geq 0}(G^{\bullet})$ the heart of the induced t-structure $(D_{gp}^{\leq 0}(G^{\bullet}),\ D_{gp}^{\geq 0}(G^{\bullet}))$. Let $B={\rm End}_{D_{gp}^{b}(A)}(G^{\bullet})^{op}$.
\vskip 5pt

\vskip 5pt
\begin{enumerate}
\item $\mathcal{C}_{gp}(G^{\bullet})$ is an abelian category and the short exact sequences in $\mathcal{C}_{gp}(G^{\bullet})$ are precisely the triangles in $D_{gp}^{b}(A)$ all of whose vertices are objects in $\mathcal{C}_{gp}(G^{\bullet})$.

\vskip 5pt

\item For a complex $X^{\bullet}$ in $D_{gp}^{b}(A)$, we have that $X^{\bullet}$ is in $\mathcal{C}_{gp}(G^{\bullet})$ if and only if $H^{0}(X^{\bullet})$ is in $\mathcal{T}(G^{\bullet}),\ H^{-1}(X^{\bullet})$ is in $\mathcal{F}(G^{\bullet})$ and $H^{i}(X^{\bullet})=0$ for $i\neq -1,0$.

\vskip 5pt

\item  The functor ${\rm Hom}_{D_{gp}^{b}(A)}(G^{\bullet}, -): \mathcal{C}_{gp}(G^{\bullet})\lxr {\rm mod}B$ is an equivalence of abelian categories.

\vskip 5pt

\item For any $E\in {\rm Gproj}A$, there is a triangle in $D_{gp}^{b}(A)$
$$E\stackrel{e}{\lxr} G^{\prime\bullet }\stackrel{f}{\lxr} G^{ \prime \prime\bullet}\stackrel{g}{\lxr} E[1] \ \ \ \ \ \ \ \ \ \ \ \ (\triangle_{G^{\bullet}})$$
with $G^{\prime\bullet }, G^{\prime \prime\bullet }\in {\rm add}G^{\bullet}$.

\vskip 5pt

\item Suppose that $A$ is of finite CM-type with the Gorenstein-projective generator $E$. Then $$Q^{\bullet}: {\rm Hom}_{D_{gp}^{b}(A)}(G^{\bullet}, G^{\prime \bullet})\xrightarrow{{\rm Hom}_{D_{gp}^{b}(A)}(G^{\bullet}, f)} {\rm Hom}_{D_{gp}^{b}(A)}(G^{\bullet}, G^{\prime \prime \bullet})$$ is a 2-term partial silting complex in $K^{b}({\rm proj} B)$, where $f$ is the map from the triangle $\triangle_{G^{\bullet}}$ of the Gorenstein-projective generator $E$.
\end{enumerate}
\end{prop}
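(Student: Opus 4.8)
The plan is to verify the two conditions defining a $2$-term partial silting complex in $K^{b}({\rm proj}B)$: that $Q^{\bullet}$ is a $2$-term complex of finitely generated projective $B$-modules, and that ${\rm Hom}_{K^{b}({\rm proj}B)}(Q^{\bullet},Q^{\bullet}[1])=0$ (any remaining approximation requirement, if imposed, holding automatically since the relevant maps are epimorphisms with projective source). Write $H:={\rm Hom}_{D_{gp}^{b}(A)}(G^{\bullet},-)$. Since $G^{\prime\bullet},G^{\prime\prime\bullet}\in{\rm add}\,G^{\bullet}$ and $H$ is additive with $H(G^{\bullet})\cong B$ as a left $B$-module, $H(G^{\prime\bullet})$ and $H(G^{\prime\prime\bullet})$ are summands of finite direct sums of $B$, hence finitely generated projective; moreover $H$ restricts to the usual projectivization equivalence ${\rm add}\,G^{\bullet}\xrightarrow{\ \sim\ }{\rm proj}B$, carrying $f$ to $H(f)$. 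Using this equivalence together with the standard description of ${\rm Hom}_{K^{b}({\rm proj}B)}$ between shifts of $2$-term complexes of projectives---the cokernel of ${\rm End}_{B}(H(G^{\prime\prime\bullet}))\oplus{\rm End}_{B}(H(G^{\prime\bullet}))\to{\rm Hom}_{B}(H(G^{\prime\bullet}),H(G^{\prime\prime\bullet}))$, $(s,r)\mapsto s\circ H(f)-H(f)\circ r$---the rigidity of $Q^{\bullet}$ becomes equivalent to the statement that every $\varphi\in{\rm Hom}_{D_{gp}^{b}(A)}(G^{\prime\bullet},G^{\prime\prime\bullet})$ can be written as $\varphi=s\circ f-f\circ r$ with $r\in{\rm End}(G^{\prime\bullet})$ and $s\in{\rm End}(G^{\prime\prime\bullet})$ (endomorphisms in $D_{gp}^{b}(A)$).

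To prove this I would feed the triangle $(\triangle_{G^{\bullet}})$ of Proposition~\ref{heart}(4) into the Hom-functors and exploit two vanishings. The first is the presilting condition (ii) of Definition~\ref{Gscomplex}, which---because $G^{\prime\bullet},G^{\prime\prime\bullet}\in{\rm add}\,G^{\bullet}$---yields ${\rm Hom}_{D_{gp}^{b}(A)}(G^{\prime\prime\bullet},G^{\prime\bullet}[1])=0$. The second is ${\rm Hom}_{D_{gp}^{b}(A)}(E,E[1])=0$: since $E$ is a Gorenstein-projective module, for every $\mathcal{GP}$-acyclic complex $Z^{\bullet}$ one has ${\rm Hom}_{K^{b}(A)}(E,Z^{\bullet}[i])=H^{i}({\rm Hom}_{A}(E,Z^{\bullet}))=0$ for all $i$; hence ${\rm Hom}_{D_{gp}^{b}(A)}(E,-)$ coincides with ${\rm Hom}_{K^{b}(A)}(E,-)$ on the stalk complex $E$, and ${\rm Hom}_{K^{b}(A)}(E,E[1])=0$ for degree reasons.

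Given $\varphi:G^{\prime\bullet}\to G^{\prime\prime\bullet}$, the composite $g\circ\varphi\circ e$ lies in ${\rm Hom}_{D_{gp}^{b}(A)}(E,E[1])=0$, so exactness of the sequence obtained by applying ${\rm Hom}_{D_{gp}^{b}(A)}(E,-)$ to $(\triangle_{G^{\bullet}})$ gives $\varphi\circ e=f\circ w$ for some $w:E\to G^{\prime\bullet}$. Applying ${\rm Hom}_{D_{gp}^{b}(A)}(-,G^{\prime\bullet})$ to $(\triangle_{G^{\bullet}})$ and using ${\rm Hom}_{D_{gp}^{b}(A)}(G^{\prime\prime\bullet},G^{\prime\bullet}[1])=0$, precomposition with $e$ maps ${\rm End}(G^{\prime\bullet})$ onto ${\rm Hom}_{D_{gp}^{b}(A)}(E,G^{\prime\bullet})$, so there is $r\in{\rm End}(G^{\prime\bullet})$ with $r\circ e=-w$. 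Then $(\varphi+f\circ r)\circ e=\varphi\circ e+f\circ(r\circ e)=f\circ w-f\circ w=0$, so by exactness of the sequence coming from ${\rm Hom}_{D_{gp}^{b}(A)}(-,G^{\prime\prime\bullet})$ applied to $(\triangle_{G^{\bullet}})$ we obtain $\varphi+f\circ r=s\circ f$ for some $s\in{\rm End}(G^{\prime\prime\bullet})$, i.e.\ $\varphi=s\circ f-f\circ r$, as required. The $2$-term Hom formula and the transport along the projectivization equivalence are routine; the step that needs care, and which I expect to be the main obstacle, is orchestrating the three long exact sequences of $(\triangle_{G^{\bullet}})$ so that the two vanishings are invoked at exactly the right places to produce first $r$ and then $s$. (Beyond ensuring that $E$ is a single Gorenstein-projective module, the finite-CM-type hypothesis plays no role in this argument.)
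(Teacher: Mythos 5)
Your proposal addresses only item (5) of the proposition; items (1)--(4) are not proved at all. This matters beyond completeness: your argument for (5) explicitly feeds on the triangle $(\triangle_{G^{\bullet}})$ "of Proposition~\ref{heart}(4)", so as written the proof is circular. Part (4) needs its own construction (the paper takes a right ${\rm add}G^{\bullet}$-approximation $G^{\prime\prime\bullet}\to E[1]$, extends it to a triangle $E\to H^{\bullet}\to G^{\prime\prime\bullet}\to E[1]$, checks that $G^{\bullet}\oplus H^{\bullet}$ is again 2-term Gorenstein silting, and uses ${\rm thick}\,G^{\bullet}=K^{b}({\rm Gproj}A)$ to force $H^{\bullet}\in{\rm add}G^{\bullet}$); parts (1)--(3) are the standard t-structure/heart facts and the tilting equivalence of \cite{HKM}, and (2) rests on Lemma~\ref{exactseq}. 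None of this is in your write-up.

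For item (5) itself, your argument is correct and is essentially the paper's: both reduce, via the projectivization ${\rm add}G^{\bullet}\simeq{\rm proj}B$ and the description of homotopies between 2-term complexes, to showing every $\varphi\colon G^{\prime\bullet}\to G^{\prime\prime\bullet}$ factors as $s\circ f-f\circ r$, and both extract $r$ and $s$ from the long exact sequences of $(\triangle_{G^{\bullet}})$ using exactly the two vanishings ${\rm Hom}_{D_{gp}^{b}(A)}(E,E[1])=0$ and ${\rm Hom}_{D_{gp}^{b}(A)}(G^{\prime\prime\bullet},G^{\prime\bullet}[1])=0$ (the latter from presilting). The only cosmetic difference is the order: the paper first completes $h$ to a morphism of triangles and factors the resulting map $G^{\prime\prime\bullet}\to E[1]$ through $g$ (producing the analogue of $s$ first), whereas you produce $r$ first and then $s$; this buys nothing either way. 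Your justification that ${\rm Hom}_{D_{gp}^{b}(A)}(E,E[1])=0$ for a Gorenstein-projective stalk complex $E$ is also sound and is a point the paper leaves implicit.
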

\begin{proof}\ (1)\ The pair $(D_{gp}^{\leq 0}(G^{\bullet}), D_{gp}^{\geq 0}(G^{\bullet}))$ is a t-structure in $D_{gp}^{b}(A)$. Then we come to the conclusion.

\vskip 5pt

(2)\ From Lemma~\ref{exactseq}, we have that
$$\begin{aligned}
& \ \ \ \ D_{gp}^{\leq 0}(G^{\bullet})\\
&=\{X^{\bullet}\in D_{gp}^{b}(A)\mid{\rm Hom}_{D_{gp}^{b}(A)}(G^{\bullet}, H^{i}(X^{\bullet}))=0\ {\rm for}\ i>0, \ {\rm Hom}_{D_{gp}^{b}(A)}(G^{\bullet}, H^{j}(X^{\bullet})[1])=0\ {\rm for}\ j\geq0 \}\\
&=\{X^{\bullet}\in D_{gp}^{b}(A)\mid H^{i}(X^{\bullet})=0\ {\rm for}\ i>0\ {\rm and}\ {\rm Hom}_{D_{gp}^{b}(A)}(G^{\bullet},\ H^{0}(X^{\bullet})[1])=0\}\\
&=\{X^{\bullet}\in D_{gp}^{b}(A)\mid H^{i}(X^{\bullet})=0\ {\rm for}\ i>0\ {\rm and}\ H^{0}(X^{\bullet})\in \mathcal{T}(G^{\bullet})\},
\end{aligned}$$
and
$$\begin{aligned}
& \ \ \ \ D_{gp}^{\geq 0}(G^{\bullet})\\
&=\{X^{\bullet}\in D_{gp}^{b}(A)\mid {\rm Hom}_{D_{gp}^{b}(A)}(G^{\bullet}, H^{i}(X^{\bullet}))=0\ {\rm for}\ i<0, \ {\rm Hom}_{D_{gp}^{b}(A)}(G^{\bullet}, H^{j}(X^{\bullet})[1])=0\ {\rm for}\ j<-1 \}\\
&=\{X^{\bullet}\in D_{gp}^{b}(A)\mid H^{i}(X^{\bullet})=0\ {\rm for}\ i<-1\ {\rm and}\ {\rm Hom}_{D_{gp}^{b}(A)}(G^{\bullet},\ H^{-1}(X^{\bullet}))=0\}\\
&=\{X^{\bullet}\in D_{gp}^{b}(A)\mid H^{i}(X^{\bullet})=0\ {\rm for}\ i<-1\ {\rm and}\ H^{-1}(X^{\bullet})\in \mathcal{F}(G^{\bullet})\}.
\end{aligned}$$
Therefore, we get that $X^{\bullet}\in \mathcal{C}_{gp}(G^{\bullet})$ if and only if $H^{0}(X^{\bullet})\in \mathcal{T}(G^{\bullet}),\ H^{-1}(X^{\bullet})\in \mathcal{F}(G^{\bullet})$ and $H^{i}(X^{\bullet})=0$ for $i\neq -1,0$.

\vskip 5pt

(3)\ The proof can be found in \cite[Theorem 1.3]{HKM}.

\vskip 5pt

(4)\ Let $G^{\prime \prime \bullet}\lxr E[1]$ be a right add$G^{\bullet}$-approximation of $E[1]$. Extend it to a triangle
$$E\lxr H^{\bullet}\lxr G^{\prime \prime \bullet}\lxr E[1],\eqno(*)$$
where $H^{\bullet}$ is a 2-term complex in $K^{b}({\rm Gproj}A)$. By applying the functors ${\rm Hom}_{D_{gp}^{b}(A)}(G^{\bullet}, -)$ and ${\rm Hom}_{D_{gp}^{b}(A)}(-, G^{\bullet})$ to the triangle $(*)$, we have that
$${\rm Hom}_{D_{gp}^{b}(A)}(G^{\bullet}, H^{\bullet}[1])=0 \ \ \ and\ \ \ {\rm Hom}_{D_{gp}^{b}(A)}(H^{\bullet}, G^{\bullet}[1])=0.$$
Applying ${\rm Hom}_{D_{gp}^{b}(A)}(-, H^{\bullet})$ yields ${\rm Hom}_{D_{gp}^{b}(A)}(H^{\bullet}, H^{\bullet}[1])=0$. Hence
$G^{\bullet}\oplus H^{\bullet}$ is a 2-term partial Gorenstein silting complex in $K^{b}({\rm Gproj}A)$. The triangle $(*)$ shows that $E\in {\rm thick}(G^{\bullet}\oplus H^{\bullet})$ and so $G^{\bullet}\oplus H^{\bullet}$ is a 2-term Gorenstein silting complex. Therefore, we get the desired triangle $\triangle_{G^{\bullet}}$.

\vskip 5pt

(5) \ Let $\alpha$ be a morphism in ${\rm Hom}_{K^{b}({\rm proj}B)}(Q^{\bullet}, Q^{\bullet}[1])$. Then it has the following form
\[
\xymatrix@C=55pt{
 & {\rm Hom}_{D_{gp}^{b}(A)}(G^{\bullet}, G^{\prime \bullet})\ar[r]^{{\rm Hom}_{D_{gp}^{b}(A)}(G^{\bullet}, f)}\ar[d]^{\alpha} & {\rm Hom}_{D_{gp}^{b}(A)}(G^{\bullet}, G^{\prime \prime \bullet})  \\
{\rm Hom}_{D_{gp}^{b}(A)}(G^{\bullet}, G^{\prime \bullet})\ar[r]^{{\rm Hom}_{D_{gp}^{b}(A)}(G^{\bullet}, f)} & {\rm Hom}_{D_{gp}^{b}(A)}(G^{\bullet}, G^{\prime \prime \bullet}) &, }
\]
and there is a morphism $h: G^{\prime \bullet}\lxr G^{\prime \prime \bullet}$ such that $\alpha= {\rm Hom}_{D_{gp}^{b}(A)}(G^{\bullet}, h)$. Since ${\rm Hom}_{D_{gp}^{b}(A)}(E,$ $ E[1])=0$, there are unique morphisms $h_{1},\ h_{2}$ such that the following diagram
\[
\xymatrix@C=45pt{
E\ar[r]^{e}\ar[d]^{h_{1}} & G^{\prime \bullet}\ar[r]^{f}\ar[d]^{h}\ar@{-->}[ld]_{h_{4}} & G^{\prime \prime \bullet}\ar[r]^{g}\ar[d]^{h_{2}}\ar@{-->}[ld]_{h_{3}} & E[1]\ar[d]^{h_{1}[1]}  \\
G^{\prime \bullet}\ar[r]^{f} & G^{\prime \prime \bullet}\ar[r]^{g} & E[1]\ar[r]^{-e[1]} & G^{\prime \bullet}[1]. }
\]
is commutative. So there is a morphism $h_{3}:G^{\prime \prime \bullet}\lxr G^{\prime \prime \bullet}$ such that $h_{2}=gh_{3}$, and also,
$$g(h-h_{3}f)=gh-gh_{3}f=gh-h_{2}f=0.$$
Hence there is a morphism $h_{4}$ such that $h-h_{3}f=fh_{4}$. Applying ${\rm Hom}_{D_{gp}^{b}(A)}(G^{\bullet}, -)$ to $h-h_{3}f=fh_{4}$ yields
$$\alpha= {\rm Hom}_{D_{gp}^{b}(A)}(G^{\bullet}, h_{3}){\rm Hom}_{D_{gp}^{b}(A)}(G^{\bullet}, f)+ {\rm Hom}_{D_{gp}^{b}(A)}(G^{\bullet}, f){\rm Hom}_{D_{gp}^{b}(A)}(G^{\bullet}, h_{4}),$$
which implies that $\alpha$ regarded as a map in ${\rm Hom}_{K^{b}({\rm proj}B)}(Q^{\bullet}, Q^{\bullet}[1])$ is null-homotopic. Thus, $Q^{\bullet}$ is a 2-term partial silting complex in $K^{b}({\rm proj}B)$.
\end{proof}

\vskip 10pt

Let $X,\ Y\in {\rm mod}A$ and $f:X\lxr Y$. We denote by ${\rm GIm}f$ the $G\mbox{-}$image of $f$, this is, ${\rm Hom}_{A}(G, X)\lxr {\rm Hom}_{A}(G, {\rm Im}f)$ remains to be epic for any $G\in {\rm Gproj}A$. Let $X\in {\rm mod}A$. Consider the canonical sequence of $X$:
$$0\lxr tX\stackrel{i_{X}}{\lxr} X\lxr X/tX\lxr 0,$$
where $tX=\sum {\rm GIm}f$ with $f\in {\rm Hom}_{A}(H^{0}(G^{\bullet}),\ X) $ such that any $g:E\lxr X$ factors through $f$ for any $E\in {\rm Gproj}A$. We collect some properties of $\mathcal{T}(G^{\bullet})$ and $\mathcal{F}(G^{\bullet})$.

\vskip 5pt

\begin{lem}\ The following hold:

\vskip 5pt

\begin{enumerate}
\item $\mathcal{T}(G^{\bullet})$ is closed under $G$-epimorphic images.

\vskip 5pt

\item $\mathcal{F}(G^{\bullet})$ is closed under $G$-submodules.
\vskip 5pt

\item For any $X\in {\rm mod}A,\ {\rm Hom}_{A}(H^{0}(G^{\bullet}),\ i_{X})$ is an isomorphism.
\end{enumerate}
\end{lem}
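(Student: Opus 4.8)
The plan is to read off all three statements from the triangles in $D_{gp}^{b}(A)$ attached to $G$-exact sequences, combined with the functorial exact sequence of Lemma~\ref{exactseq}, which expresses ${\rm Hom}_{D_{gp}^{b}(A)}(G^{\bullet},-)$ in terms of the cohomology in degrees $-1$ and $0$.

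For (1) I would take a $G$-epimorphism $g\colon X\lxr Y$ with $X\in \mathcal{T}(G^{\bullet})$, put $K=\ker g$, and use that the $G$-exact sequence $0\lxr K\lxr X\lxr Y\lxr 0$ produces a triangle $K\lxr X\lxr Y\lxr K[1]$ in $D_{gp}^{b}(A)$. Applying ${\rm Hom}_{D_{gp}^{b}(A)}(G^{\bullet},-)$ and looking at the stretch around ${\rm Hom}_{D_{gp}^{b}(A)}(G^{\bullet}, Y[1])$ sandwiches it between ${\rm Hom}_{D_{gp}^{b}(A)}(G^{\bullet}, X[1])$, which is $0$ by hypothesis, and ${\rm Hom}_{D_{gp}^{b}(A)}(G^{\bullet}, K[2])$, which vanishes by Lemma~\ref{exactseq} because $K[2]$ has cohomology only in degree $-2$. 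Hence $Y\in \mathcal{T}(G^{\bullet})$. Statement (2) is dual: given a $G$-submodule $Y$ of $F\in \mathcal{F}(G^{\bullet})$, the $G$-exact sequence $0\lxr Y\lxr F\lxr F/Y\lxr 0$ yields a triangle, and ${\rm Hom}_{D_{gp}^{b}(A)}(G^{\bullet}, Y)$ is then squeezed between ${\rm Hom}_{D_{gp}^{b}(A)}(G^{\bullet}, (F/Y)[-1])$, which is $0$ by Lemma~\ref{exactseq} since $(F/Y)[-1]$ has cohomology only in degree $1$, and ${\rm Hom}_{D_{gp}^{b}(A)}(G^{\bullet}, F)=0$; thus $Y\in \mathcal{F}(G^{\bullet})$.

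For (3), injectivity of ${\rm Hom}_{A}(H^{0}(G^{\bullet}), i_{X})$ is clear since $i_{X}$ is a monomorphism. For surjectivity I would check that any $\varphi\colon H^{0}(G^{\bullet})\lxr X$ factors through $i_{X}$: its $G$-image ${\rm GIm}\,\varphi$ is one of the submodules whose sum defines $tX$, so $\varphi$ already maps into $tX$, which gives the required preimage. Hence ${\rm Hom}_{A}(H^{0}(G^{\bullet}), i_{X})$ is bijective.

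The only step I expect to require real care is the vanishing of the two boundary groups ${\rm Hom}_{D_{gp}^{b}(A)}(G^{\bullet}, K[2])$ and ${\rm Hom}_{D_{gp}^{b}(A)}(G^{\bullet}, (F/Y)[-1])$: this is exactly the place where the 2-term shape of $G^{\bullet}$ is used. In each case it is immediate from Lemma~\ref{exactseq} once one observes that the shifted module has cohomology outside the range $\{-1,0\}$; alternatively one may invoke ${\rm Gext}^{\geq 1}(G_{i},-)=0$ for $G_{i}\in {\rm Gproj}A$ together with the triangle $G_{0}\lxr G^{\bullet}\lxr G_{1}[1]\lxr G_{0}[1]$. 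Everything else is formal bookkeeping with triangles and $G$-exact sequences.
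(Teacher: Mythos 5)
Your proposal is correct and follows essentially the same route as the paper: apply ${\rm Hom}_{D_{gp}^{b}(A)}(G^{\bullet},-)$ to the triangle coming from the $G$-exact sequence and use the vanishing of ${\rm Hom}_{D_{gp}^{b}(A)}(G^{\bullet}, K[2])$ and ${\rm Hom}_{D_{gp}^{b}(A)}(G^{\bullet}, (F/Y)[-1])$, with (3) read off from the definition of $tX$. The only difference is that you justify those two boundary vanishings explicitly via Lemma~\ref{exactseq}, which the paper leaves implicit; this is a welcome extra precision, not a divergence.
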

\begin{proof}\ (1)\ \ Let $0\lxr X\lxr Y\lxr Z\lxr 0$ be a $G$-exact sequence in ${\rm mod}A$. Applying ${\rm Hom}_{D_{gp}^{b}(A)}(G^{\bullet}, -)$, we get the following exact sequence
$${\rm Hom}_{D_{gp}^{b}(A)}(G^{\bullet}, Y[1])\lxr {\rm Hom}_{D_{gp}^{b}(A)}(G^{\bullet}, Z[1])\lxr {\rm Hom}_{D_{gp}^{b}(A)}(G^{\bullet}, X[2]).$$
Since ${\rm Hom}_{D_{gp}^{b}(A)}(G^{\bullet}, X[2])=0$, then we have that $Y\in \mathcal{T}(G^{\bullet})$ implies $Z\in \mathcal{T}(G^{\bullet})$.

\vskip 5pt

(2)\ \ Let $0\lxr X\lxr Y\lxr Z\lxr 0$ be a $G$-exact sequence in ${\rm mod}A$. Applying ${\rm Hom}_{D_{gp}^{b}(A)}(G^{\bullet}, -)$, we get the following exact sequence
$$0\lxr {\rm Hom}_{D_{gp}^{b}(A)}(G^{\bullet}, X)\lxr {\rm Hom}_{D_{gp}^{b}(A)}(G^{\bullet}, Y)\lxr {\rm Hom}_{D_{gp}^{b}(A)}(G^{\bullet}, Z).$$
Hence $Y\in \mathcal{F}(G^{\bullet})$ implies $X\in \mathcal{F}(G^{\bullet})$.

\vskip 5pt

(3)\ \ By the definition of $tX$, we immediately obtain the desired isomorphism.
\end{proof}

\vskip 10pt

\begin{lem}\
\label{triangle}
There exists a triangle in  $D_{gp}^{b}(A)$ for a 2-term Gorenstein silting complex $G^{\bullet}$ of the form
$$H^{-1}(G^{\bullet})[1]\lxr G^{\bullet}\lxr H^{0}(G^{\bullet})\lxr H^{-1}(G^{\bullet})[2].$$
\begin{proof}\ Let $G^{\prime\bullet}:=0\lxr {\rm Im}d^{1}\lxr G_{0}\lxr 0$. We have a $G$-exact sequence
$$0\lxr {\rm Ker}d^{1}[1]\lxr G^{\bullet}\lxr G^{\prime\bullet}\lxr 0$$
in $C^{b}(A)$. Since $G^{\prime\bullet}$ is $\mathcal{GP}$-quasi-isomorphic to
$H^{0}(G^{\bullet})$, then  $G^{\prime\bullet}\cong H^{0}(G^{\bullet})$ in $D_{gp}^{b}(A)$, we get the desired triangle in $D_{gp}^{b}(A)$ of the form
$$H^{-1}(G^{\bullet})[1]\lxr G^{\bullet}\lxr H^{0}(G^{\bullet})\lxr H^{-1}(G^{\bullet})[2].$$
\end{proof}
\end{lem}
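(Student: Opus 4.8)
The plan is to produce the triangle from the stupid (brutal) truncation of the two-term complex $G^{\bullet}: G_{1}\stackrel{d^{1}}{\lxr} G_{0}$, which I regard as concentrated in degrees $-1$ and $0$, so that $H^{-1}(G^{\bullet})={\rm Ker}d^{1}$ and $H^{0}(G^{\bullet})={\rm Coker}d^{1}$. Put $G^{\prime\bullet}$ for the subcomplex $({\rm Im}d^{1}\hookrightarrow G_{0})$ in the same two degrees. Then there is a short exact sequence of complexes in $C^{b}(A)$
$$0\lxr {\rm Ker}d^{1}[1]\lxr G^{\bullet}\lxr G^{\prime\bullet}\lxr 0,$$
with ${\rm Ker}d^{1}$ a stalk complex in degree $-1$. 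The first task is to verify that this sequence is $G$-exact; the second is to identify its outer terms with $H^{-1}(G^{\bullet})[1]$ and $H^{0}(G^{\bullet})$ in $D^{b}_{gp}(A)$. Granting both, the complex analogue of the principle recalled in the preliminaries (following \cite{GZ}) turns the $G$-exact sequence into a triangle in $D^{b}_{gp}(A)$, and one rotation gives the stated form.

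For the $G$-exactness I would argue degreewise and apply ${\rm Hom}_{A}(G,-)$ for an arbitrary $G\in {\rm Gproj}A$. In degree $0$ the sequence reads $0\lxr 0\lxr G_{0}\lxr G_{0}\lxr 0$ and there is nothing to check. In degree $-1$ it reads $0\lxr {\rm Ker}d^{1}\lxr G_{1}\lxr {\rm Im}d^{1}\lxr 0$; since ${\rm Hom}_{A}(G,-)$ is left exact, the only point is surjectivity of ${\rm Hom}_{A}(G,G_{1})\lxr {\rm Hom}_{A}(G,{\rm Im}d^{1})$, which is exactly the hypothesis that $G_{1}\lxr {\rm Im}d^{1}$ is a right ${\rm Gproj}A$-approximation --- condition (i) in the definition of a 2-term Gorenstein silting complex. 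So the sequence of complexes is $G$-exact.

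It then remains to see that $G^{\prime\bullet}\cong H^{0}(G^{\bullet})$ in $D^{b}_{gp}(A)$ (the identification ${\rm Ker}d^{1}[1]=H^{-1}(G^{\bullet})[1]$ is immediate from the definition of $H^{-1}$). The canonical chain map $G^{\prime\bullet}\lxr {\rm Coker}d^{1}$, with target a stalk complex in degree $0$, is an ordinary quasi-isomorphism because ${\rm Im}d^{1}\hookrightarrow G_{0}$ is monic with cokernel ${\rm Coker}d^{1}$; it is moreover a $\mathcal{GP}$-quasi-isomorphism precisely when $0\lxr {\rm Im}d^{1}\lxr G_{0}\lxr {\rm Coker}d^{1}\lxr 0$ is $G$-exact, and this holds because $G_{0}\lxr {\rm Coker}d^{1}$ is a right ${\rm Gproj}A$-approximation, again by condition (i). Hence $G^{\prime\bullet}\cong H^{0}(G^{\bullet})$ in $D^{b}_{gp}(A)$, and substituting into the triangle $({\rm Ker}d^{1})[1]\lxr G^{\bullet}\lxr G^{\prime\bullet}\lxr ({\rm Ker}d^{1})[2]$ yields $H^{-1}(G^{\bullet})[1]\lxr G^{\bullet}\lxr H^{0}(G^{\bullet})\lxr H^{-1}(G^{\bullet})[2]$.

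The step I expect to be the real (and essentially the only) obstacle is conceptual rather than computational: in $D^{b}_{gp}(A)$ the brutal truncation sequence need not descend to a triangle, and a two-term complex need not be isomorphic to its top cohomology placed in degree $0$ --- both statements fail in general and hold here only because the silting hypothesis (i) makes the relevant short exact sequences $\mathcal{GP}$-exact. Thus the whole argument is the classical brutal-truncation argument with ``$G$-exact'' inserted at each step, and the care required is simply to check that condition (i) is exactly what licenses each such insertion.
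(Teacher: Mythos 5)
Your proof is correct and follows exactly the same route as the paper's: the brutal-truncation short exact sequence $0\to\mathrm{Ker}\,d^{1}[1]\to G^{\bullet}\to G^{\prime\bullet}\to 0$, its $G$-exactness, and the $\mathcal{GP}$-quasi-isomorphism $G^{\prime\bullet}\to H^{0}(G^{\bullet})$. The only difference is that you spell out the two verifications (surjectivity of $\mathrm{Hom}_A(G,G_1)\to\mathrm{Hom}_A(G,\mathrm{Im}\,d^1)$ and of $\mathrm{Hom}_A(G,G_0)\to\mathrm{Hom}_A(G,\mathrm{Coker}\,d^1)$) and correctly identify condition (i) of the definition of a 2-term Gorenstein silting complex as what licenses them, which the paper leaves implicit.
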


\vskip 10pt

\begin{lem}\
\label{functorialisomorphism}
For any $X\in {\rm mod}A$, we have a functorial isomorphism
$${\rm Hom}_{D_{gp}^{b}(A)}(G^{\bullet}, X)\cong {\rm Hom}_{A}(H^{0}(G^{\bullet}), X)$$
and a monomorphism
$${\rm Hom}_{D_{gp}^{b}(A)}(H^{0}(G^{\bullet}), X[1])\lxr {\rm Hom}_{A}(G^{\bullet}, X[1]).$$
\begin{proof}\ Applying ${\rm Hom}_{D_{gp}^{b}(A)}(-, X)$ to the triangle in Lemma ~\ref{triangle}
$$H^{-1}(G^{\bullet})[1]\lxr G^{\bullet}\lxr H^{0}(G^{\bullet})\lxr H^{-1}(G^{\bullet})[2]$$
and using that there is no non-zero negative extensions between modules, we get the required isomorphism and monomorphism.
\end{proof}
\end{lem}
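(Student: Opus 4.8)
The plan is to deduce both assertions from the distinguished triangle
\[
H^{-1}(G^{\bullet})[1]\lxr G^{\bullet}\lxr H^{0}(G^{\bullet})\lxr H^{-1}(G^{\bullet})[2]
\]
supplied by Lemma~\ref{triangle}, by applying the contravariant cohomological functor ${\rm Hom}_{D_{gp}^{b}(A)}(-,\,X)$ and reading off the relevant pieces of the resulting long exact sequence. Note that $H^{0}(G^{\bullet})={\rm Coker}\,d^{1}$ and $H^{-1}(G^{\bullet})={\rm Ker}\,d^{1}$ are honest objects of ${\rm mod}A$ because $G^{\bullet}$ is a two-term complex of modules, so the terms of the triangle make sense as modules shifted into $D_{gp}^{b}(A)$.

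First I would isolate the two structural facts about the Gorenstein derived category that drive the argument: the canonical functor ${\rm mod}A\lxr D_{gp}^{b}(A)$ is fully faithful, so that ${\rm Hom}_{D_{gp}^{b}(A)}(M,\,N)\cong {\rm Hom}_{A}(M,\,N)$ for all $M,N\in{\rm mod}A$; and ${\rm Hom}_{D_{gp}^{b}(A)}(M,\,N[i])=0$ for every $i<0$, i.e.\ there are no non-zero negative $G$-extensions between modules. Both follow from the computation of $D_{gp}^{b}(A)$ via proper Gorenstein-projective resolutions recorded in~\cite{GZ}, and they are exactly the inputs referred to in the statement's remark about ``no non-zero negative extensions between modules''.

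Granting these, applying ${\rm Hom}_{D_{gp}^{b}(A)}(-,\,X)$ to the triangle yields the exact sequence
\[
{\rm Hom}_{D_{gp}^{b}(A)}(H^{-1}(G^{\bullet}),\,X[-2])\lxr {\rm Hom}_{D_{gp}^{b}(A)}(H^{0}(G^{\bullet}),\,X)\lxr {\rm Hom}_{D_{gp}^{b}(A)}(G^{\bullet},\,X)\lxr {\rm Hom}_{D_{gp}^{b}(A)}(H^{-1}(G^{\bullet}),\,X[-1]),
\]
whose two outer terms vanish by the negative-Ext fact; hence the middle arrow is an isomorphism, and full faithfulness identifies its source with ${\rm Hom}_{A}(H^{0}(G^{\bullet}),\,X)$, giving the first claim. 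For the monomorphism I would run the same long exact sequence with $X[1]$ in place of $X$: the term immediately preceding ${\rm Hom}_{D_{gp}^{b}(A)}(H^{0}(G^{\bullet}),\,X[1])$ is ${\rm Hom}_{D_{gp}^{b}(A)}(H^{-1}(G^{\bullet}),\,X[-1])=0$, so the induced map ${\rm Hom}_{D_{gp}^{b}(A)}(H^{0}(G^{\bullet}),\,X[1])\lxr {\rm Hom}_{D_{gp}^{b}(A)}(G^{\bullet},\,X[1])$ is injective (it need not be onto, since the next term ${\rm Hom}_{D_{gp}^{b}(A)}(H^{-1}(G^{\bullet}),\,X)$ is generally non-zero). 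Functoriality in $X$ is automatic, the triangle being fixed and the long exact sequence natural. The argument carries no genuine difficulty; the only point needing care is citing the two facts about $D_{gp}^{b}(A)$ correctly from~\cite{GZ} and keeping track of the shifts when passing from $X$ to $X[1]$.
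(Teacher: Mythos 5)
Your argument is exactly the paper's: apply ${\rm Hom}_{D_{gp}^{b}(A)}(-,X)$ (resp. $(-,X[1])$) to the triangle of Lemma~\ref{triangle} and kill the outer terms using the vanishing of negative extensions between modules, with full faithfulness of ${\rm mod}A\to D_{gp}^{b}(A)$ identifying the module Hom-groups. The shift bookkeeping and the observation that the map into ${\rm Hom}_{D_{gp}^{b}(A)}(G^{\bullet},X[1])$ is only injective (the next term being ${\rm Hom}_{D_{gp}^{b}(A)}(H^{-1}(G^{\bullet}),X)$) are both correct.
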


\vskip 10pt

\begin{thm}\
\label{torsionpair}\ The following are equivalent for a complex $G^{\bullet}: G_{1}\lxr G_{0}$ with $G_{i}\in {\rm Gproj}A$.

\vskip 5pt

\begin{enumerate}
\item $G^{\bullet}$ is a 2-term Gorenstein silting complex in $D_{gp}^{b}(A)$.

\vskip 5pt

\item $\mathcal{T}(G^{\bullet})\cap \mathcal{F}(G^{\bullet})={0}$ and $H^{0}(G^{\bullet})\in \mathcal{T}(G^{\bullet})$.

\vskip 5pt

\item $\mathcal{T}(G^{\bullet})\cap \mathcal{F}(G^{\bullet})={0}$ and $t(X)\in \mathcal{T}(G^{\bullet}), \ X/tX\in \mathcal{F}(G^{\bullet})$ for all $X\in {\rm mod}A$.

\vskip 5pt

\item $(\mathcal{T}(G^{\bullet}), \mathcal{F}(G^{\bullet}))$ is a torsion pair for ${\rm mod}A$.
\end{enumerate}
\end{thm}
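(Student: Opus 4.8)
The plan is to prove the cycle of implications $(1)\Rightarrow(2)\Rightarrow(3)\Rightarrow(4)\Rightarrow(1)$. The recurring device is the triangle $H^{-1}(G^\bullet)[1]\to G^\bullet\to H^0(G^\bullet)\to H^{-1}(G^\bullet)[2]$ of Lemma~\ref{triangle} together with Lemma~\ref{exactseq}: the latter gives $\mathrm{Hom}_{D_{gp}^b(A)}(G^\bullet,M[j])=0$ for every module $M$ and every $j\ge 2$, so applying $\mathrm{Hom}_{D_{gp}^b(A)}(G^\bullet,-[1])$ to that triangle produces a natural isomorphism $\mathrm{Hom}_{D_{gp}^b(A)}(G^\bullet,G^\bullet[1])\cong\mathrm{Hom}_{D_{gp}^b(A)}(G^\bullet,H^0(G^\bullet)[1])$. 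Hence the presilting condition (ii) of Definition~\ref{Gscomplex} is equivalent to $H^0(G^\bullet)\in\mathcal T(G^\bullet)$, an equivalence I will use in both $(1)\Rightarrow(2)$ and $(4)\Rightarrow(1)$. I will also use repeatedly that $\mathrm{Hom}_{D_{gp}^b(A)}(G^\bullet,X)\cong\mathrm{Hom}_A(H^0(G^\bullet),X)$ for $X\in\mathrm{mod}A$ (Lemma~\ref{functorialisomorphism}).

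For $(1)\Rightarrow(2)$, the clause $H^0(G^\bullet)\in\mathcal T(G^\bullet)$ is the equivalence just recorded applied to (ii), and $\mathcal T(G^\bullet)\cap\mathcal F(G^\bullet)=0$ comes from the induced t-structure, since $\mathcal T(G^\bullet)=D_{gp}^{\le 0}(G^\bullet)\cap\mathrm{mod}A$, $\mathcal F(G^\bullet)=D_{gp}^{\ge 1}(G^\bullet)\cap\mathrm{mod}A$, and $\mathrm{Hom}_{D_{gp}^b(A)}(D_{gp}^{\le 0}(G^\bullet),D_{gp}^{\ge 1}(G^\bullet))=0$, so an object of the intersection has zero endomorphism ring and vanishes. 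For $(2)\Rightarrow(3)$: the canonical sequence $0\to tX\to X\to X/tX\to 0$ is $G$-exact, $tX$ lies in $\mathrm{Gen}_G(H^0(G^\bullet))$, and $\mathcal T(G^\bullet)$ is closed under $G$-epimorphic images and finite direct sums, hence $tX\in\mathcal T(G^\bullet)$ because $H^0(G^\bullet)\in\mathcal T(G^\bullet)$; then applying $\mathrm{Hom}_{D_{gp}^b(A)}(G^\bullet,-)$ to the triangle $tX\to X\to X/tX\to tX[1]$, using Lemma~\ref{functorialisomorphism}, the fact that $\mathrm{Hom}_A(H^0(G^\bullet),i_X)$ is an isomorphism, and $\mathrm{Hom}_{D_{gp}^b(A)}(G^\bullet,tX[1])=0$, one gets $\mathrm{Hom}_{D_{gp}^b(A)}(G^\bullet,X/tX)=0$, i.e. $X/tX\in\mathcal F(G^\bullet)$.

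For $(3)\Rightarrow(4)$ I would first upgrade $(3)$ to $\mathrm{Hom}_A(\mathcal T(G^\bullet),\mathcal F(G^\bullet))=0$: if $X\in\mathcal T(G^\bullet)$, then in its canonical sequence $X/tX$ is a $G$-epimorphic image of $X$, hence in $\mathcal T(G^\bullet)$, but also in $\mathcal F(G^\bullet)$ by $(3)$, so $X/tX=0$ and $X=tX\in\mathrm{Gen}_G(H^0(G^\bullet))$; a morphism from $X$ to an object of $\mathcal F(G^\bullet)$ then kills a $G$-epimorphism onto $X$ and so is zero. Given this vanishing, $\mathcal T\cap\mathcal F=0$, and the canonical sequences, closure of $\mathcal T(G^\bullet)$ under quotients and of $\mathcal F(G^\bullet)$ under submodules is formal, and $(\mathcal T(G^\bullet),\mathcal F(G^\bullet))$ is a torsion pair. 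For $(4)\Rightarrow(1)$: a torsion pair satisfies that $\mathcal T(G^\bullet)$ is the left $\mathrm{Hom}_A$-orthogonal of $\mathcal F(G^\bullet)$, and since $\mathrm{Hom}_A(H^0(G^\bullet),F)\cong\mathrm{Hom}_{D_{gp}^b(A)}(G^\bullet,F)=0$ for $F\in\mathcal F(G^\bullet)$ we get $H^0(G^\bullet)\in\mathcal T(G^\bullet)$, hence condition (ii) by the opening observation, while condition (i) holds by assumption on $G^\bullet$. It remains to show $\mathrm{thick}\,G^\bullet=K^b(\mathrm{Gproj}A)$, which I would do by proving that each $E\in\mathrm{Gproj}A$ lies in $\mathrm{thick}\,G^\bullet$: take a right $\mathrm{add}\,G^\bullet$-approximation of $E[1]$, complete it to a triangle $E\to H^\bullet\to G^{\prime\prime\bullet}\to E[1]$ with $G^{\prime\prime\bullet}\in\mathrm{add}\,G^\bullet$ and $H^\bullet$ a $2$-term complex of Gorenstein-projectives, verify (applying $\mathrm{Hom}_{D_{gp}^b(A)}(G^\bullet,-)$ and $\mathrm{Hom}_{D_{gp}^b(A)}(-,H^\bullet)$ to that triangle) that $G^\bullet\oplus H^\bullet$ is again $2$-term partial Gorenstein silting, and use the completeness $\mathcal T(G^\bullet)\cap\mathcal F(G^\bullet)=0$ to force $H^\bullet\in\mathrm{add}\,G^\bullet$; the triangle then exhibits $E$ in $\mathrm{thick}\,G^\bullet$.

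The main obstacle is this last step --- promoting a $2$-term partial Gorenstein silting complex to a genuine Gorenstein silting complex once its associated torsion pair is complete. It is the Gorenstein analogue of the Bongartz-type completion argument of Adachi--Iyama--Reiten in the classical case, and the delicate points are to keep the approximation-and-cone construction entirely inside $\mathrm{Gproj}A$ (so that all the complexes stay $2$-term over $\mathrm{Gproj}A$) and to control the indecomposable summands so as to conclude $H^\bullet\in\mathrm{add}\,G^\bullet$. By contrast, $(1)\Rightarrow(2)\Rightarrow(3)\Rightarrow(4)$ reduces, granting the comparison lemmas and the bookkeeping with $G$-exact sequences, to routine checks.
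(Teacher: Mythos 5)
Your chain $(1)\Rightarrow(2)\Rightarrow(3)\Rightarrow(4)$ tracks the paper's argument essentially step for step (the paper organizes it as $(1)\Leftrightarrow(2)$ and $(2)\Rightarrow(3)\Rightarrow(4)\Rightarrow(2)$; your handling of the canonical sequence and of $(3)\Rightarrow(4)$ is if anything more explicit than the paper's ``by definition''). The problem is how you close the cycle. To obtain ${\rm thick}\,G^{\bullet}=K^{b}({\rm Gproj}A)$ from $(4)$ you propose a Bongartz-type completion: form the triangle $E\to H^{\bullet}\to G^{\prime\prime\bullet}\to E[1]$, check that $G^{\bullet}\oplus H^{\bullet}$ is $2$-term partial Gorenstein silting, and then ``use the completeness $\mathcal{T}(G^{\bullet})\cap\mathcal{F}(G^{\bullet})=0$ to force $H^{\bullet}\in{\rm add}\,G^{\bullet}$.'' That last clause is not an argument; it is precisely the hard content of the implication, and you yourself flag it as the main obstacle without supplying it. Knowing that $G^{\bullet}\oplus H^{\bullet}$ is presilting and that the torsion pair attached to $G^{\bullet}$ already decomposes every module does not, by any result available in the paper, pin $H^{\bullet}$ inside ${\rm add}\,G^{\bullet}$ --- nor even inside ${\rm thick}\,G^{\bullet}$, which is all you actually need. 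Making that step work would require a count of indecomposable summands of $2$-term Gorenstein silting complexes (a Gorenstein analogue of the Adachi--Iyama--Reiten equality $|P|=|A|$, which is not established here and would need finite CM-type hypotheses absent from the theorem), or else one ends up tacitly assuming $G^{\bullet}$ is already silting. So $(4)\Rightarrow(1)$ has a genuine gap as written.

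The paper closes the cycle by a much shorter d\'evissage, and you should switch to it. Thickness of $G^{\bullet}$ is identified (implicitly) with the vanishing of the right orthogonal: ${\rm thick}\,G^{\bullet}=K^{b}({\rm Gproj}A)$ iff every $X^{\bullet}\in D_{gp}^{b}(A)$ with ${\rm Hom}_{D_{gp}^{b}(A)}(G^{\bullet},X^{\bullet}[n])=0$ for all $n\in\mathbb{Z}$ vanishes. Granting that criterion, Lemma~\ref{exactseq} does all the work in both directions: the exact sequence $0\to{\rm Hom}_{D_{gp}^{b}(A)}(G^{\bullet},H^{n-1}(X^{\bullet})[1])\to{\rm Hom}_{D_{gp}^{b}(A)}(G^{\bullet},X^{\bullet}[n])\to{\rm Hom}_{D_{gp}^{b}(A)}(G^{\bullet},H^{n}(X^{\bullet}))\to0$ shows that total Hom-vanishing forces every $H^{n}(X^{\bullet})$ into $\mathcal{T}(G^{\bullet})\cap\mathcal{F}(G^{\bullet})=\{0\}$, and conversely a module in $\mathcal{T}(G^{\bullet})\cap\mathcal{F}(G^{\bullet})$ has ${\rm Hom}_{D_{gp}^{b}(A)}(G^{\bullet},X[n])=0$ for all $n$ and hence is $0$ once $A\in{\rm thick}\,G^{\bullet}$. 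This replaces your entire completion construction. (Two minor remarks: your use of the induced t-structure in $(1)\Rightarrow(2)$ is legitimate but unnecessary, since the same orthogonality argument gives $\mathcal{T}(G^{\bullet})\cap\mathcal{F}(G^{\bullet})=0$ directly; and condition (i) of Definition~\ref{Gscomplex} is treated as a standing hypothesis on $G^{\bullet}$ both by you and by the paper, so I do not count that against you.)
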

\begin{proof}\ ${\rm (1)\Leftrightarrow (2)}$\ ${\rm Hom}_{D_{gp}^{b}(A)}(G^{\bullet}, G^{\bullet}[i])=0$ for all $i>0$ if and only if $H^{0}(G^{\bullet})\in \mathcal{T}(G^{\bullet})$ by Lemma~\ref{exactseq}. For any $X\in \mathcal{T}(G^{\bullet})\cap \mathcal{F}(G^{\bullet}),\ {\rm Hom}_{D_{gp}^{b}(A)}(G^{\bullet}, X[n])=0$ for all $n\in \mathbb{Z}$ and hence $X=0$. Conversely, let $X^{\bullet}\in D_{gp}^{b}(A)$ with ${\rm Hom}_{D_{gp}^{b}(A)}(G^{\bullet}, X[n])=0$ for all $n\in \mathbb{Z}$. Then by Lemma~\ref{exactseq}, $H^{n}(X^{\bullet})\in \mathcal{T}(G^{\bullet})\cap \mathcal{F}(G^{\bullet})={0}$.

\vskip 5pt

${\rm (2)\Rightarrow (3)}$\ Let $X\in {\rm mod}A$. Since $H^{0}(G^{\bullet})\in \mathcal{T}(G^{\bullet})$, it follows that $tX\in \mathcal{T}(G^{\bullet})$. Next, since  there is an isomorphism by Lemma~\ref{functorialisomorphism}
$${\rm Hom}_{D_{gp}^{b}(A)}(G^{\bullet}, X/tX)\cong {\rm Hom}_{A}(H^{0}(G^{\bullet}), X/tX),$$
and ${\rm Hom}_{A}(H^{0}(G^{\bullet}), i_{X})$ is an isomorphism, it follows that ${\rm Hom}_{D_{gp}^{b}(A)}(G^{\bullet}, X/tX)=0$ and hence $ X/tX\in \mathcal{F}(G^{\bullet})$.

\vskip 5pt

${\rm (3)\Rightarrow (4)}$\ It can be obtained by the definition.

\vskip 5pt

${\rm (4)\Rightarrow (2)}$\ We just need to prove that $H^{0}(G^{\bullet})\in \mathcal{T}(G^{\bullet})$. By Lemma~\ref{functorialisomorphism}
$$0={\rm Hom}_{D_{gp}^{b}(A)}(G^{\bullet}, \mathcal{F}(G^{\bullet}))\cong {\rm Hom}_{A}(H^{0}(G^{\bullet}), \mathcal{F}(G^{\bullet})),$$
it follows from $(\mathcal{T}(G^{\bullet}), \mathcal{F}(G^{\bullet}))$ is a torsion pair that $H^{0}(G^{\bullet})\in \mathcal{T}(G^{\bullet})$.
\end{proof}

\vskip 10pt

\begin{rem}\
\label{torequi}\
Note that the torsion pair $(\mathcal{T}(G^{\bullet}), \mathcal{F}(G^{\bullet}))$ coincides with $(D_{\theta}, T^{\perp 0})$ defined in the subsection 2.1.
\end{rem}
\begin{proof}\ Let $G^{\bullet}: G_{1}\stackrel{\theta}{\lxr} G_{0}$ be a 2-term Gorenstein silting complex in $D_{gp}^{b}(A)$, and $T=H^{0}(G^{\bullet})={\rm Coker}\theta$.  On one hand, consider the distinguished triangle in $D_{gp}^{b}(A)$
$$G_{1}\stackrel{\theta}{\lxr} G_{0}\lxr G^{\bullet}\lxr G_{1}[1].$$
Applying the functor ${\rm Hom}_{D_{gp}^{b}(A)}(-, X)$ for any $A\mbox{-}$module $X$, there is the induced exact sequence
$${\rm Hom}_{D_{gp}^{b}(A)}(G_{0}, X)\lxr {\rm Hom}_{D_{gp}^{b}(A)}(G_{1}, X)\lxr {\rm Hom}_{D_{gp}^{b}(A)}(G^{\bullet}[-1], X)\lxr 0.$$
Since ${\rm Hom}_{D_{gp}^{b}(A)}(G_{i}, X)\cong {\rm Hom}_{A}(G_{i}, X)$ with $i=0,1$, we get that $X\in D_{\theta}$ if and only if ${\rm Hom}_{D_{gp}^{b}(A)}(G^{\bullet}, X[1])=0$ if and only if $X\in \mathcal{T}(G^{\bullet})$.

\vskip 5pt

On the other hand, since
$${\rm Hom}_{A}(T, X)\cong {\rm Hom}_{D_{gp}^{b}(A)}(T, X)\cong {\rm Hom}_{D_{gp}^{b}(A)}(G^{\bullet}, X),$$
we get that $X\in T^{\perp 0}$ if and only if $X\in \mathcal{F}(G^{\bullet})$.
\end{proof}

\vskip 10pt

\begin{prop}\ Let $G^{\bullet}$ be a 2-term Gorenstein silting complex in $D_{gp}^{b}(A)$ and $(\mathcal{T}(G^{\bullet}), $ $ \mathcal{F}(G^{\bullet}))$ the torsion pair induced by $G^{\bullet}$.

\vskip 5pt

\begin{enumerate}
\item For any $X\in {\rm mod}A$,  $X\in {\rm add}H^{0}(G^{\bullet})$ if and only if $X$ is ${\rm Ext}$-projective in $\mathcal{T}(G^{\bullet})$.

\vskip 5pt

\item For any $X\in \mathcal{T}(G^{\bullet})$, there is a $G$-exact sequence $0\lxr L\lxr T_{0}\lxr X\lxr 0$ with $T_{0}\in {\rm add}H^{0}(G^{\bullet})$ and $L\in \mathcal{T}(G^{\bullet})$.
\end{enumerate}
\end{prop}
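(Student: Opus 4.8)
The plan is to prove (2) first, since the backward implication of (1) depends on it, and then deduce (1). Write $\theta\colon G_{1}\lxr G_{0}$ for the morphism defining $G^{\bullet}$ and $T:=H^{0}(G^{\bullet})={\rm Coker}\,\theta$. By Remark~\ref{torequi} we have $\mathcal{T}(G^{\bullet})=D_{\theta}$, and since $G^{\bullet}$ is $2$-term Gorenstein silting, $T\in\mathcal{T}(G^{\bullet})$ by Theorem~\ref{torsionpair}. Throughout, ``$X$ is ${\rm Ext}$-projective in $\mathcal{T}(G^{\bullet})$'' will mean that $X\in\mathcal{T}(G^{\bullet})$ and ${\rm Gext}^{1}_{A}(X,Y)=0$ for all $Y\in\mathcal{T}(G^{\bullet})$, i.e. every $G\mbox{-}$exact sequence $0\lxr Y\lxr Z\lxr X\lxr 0$ with $Y\in\mathcal{T}(G^{\bullet})$ splits.

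Granting (2), part (1) is immediate. If $X\in{\rm add}\,H^{0}(G^{\bullet})={\rm add}\,T$, then $X\in\mathcal{T}(G^{\bullet})$ since a torsion class is closed under direct summands, and by Lemma~\ref{silting}(ii) together with Remark~\ref{torequi} we have $\mathcal{T}(G^{\bullet})=D_{\theta}\subseteq T^{G\perp}$, so ${\rm Gext}^{1}_{A}(T,Y)=0$ for every $Y\in\mathcal{T}(G^{\bullet})$; by additivity of ${\rm Gext}^{1}_{A}(-,Y)$ this passes to the summand $X$, so $X$ is ${\rm Ext}$-projective in $\mathcal{T}(G^{\bullet})$. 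Conversely, if $X$ is ${\rm Ext}$-projective in $\mathcal{T}(G^{\bullet})$, apply (2) to get a $G\mbox{-}$exact sequence $0\lxr L\lxr T_{0}\lxr X\lxr 0$ with $T_{0}\in{\rm add}\,H^{0}(G^{\bullet})$ and $L\in\mathcal{T}(G^{\bullet})$; by ${\rm Ext}$-projectivity it splits, so $X$ is a summand of $T_{0}$, hence $X\in{\rm add}\,H^{0}(G^{\bullet})$.

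To prove (2), fix $X\in\mathcal{T}(G^{\bullet})$. By Proposition~\ref{heart}(2) a module lies in the heart $\mathcal{C}_{gp}(G^{\bullet})$ precisely when it lies in $\mathcal{T}(G^{\bullet})$, so $X\in\mathcal{C}_{gp}(G^{\bullet})$. Let $B={\rm End}_{D_{gp}^{b}(A)}(G^{\bullet})^{op}$ and $F={\rm Hom}_{D_{gp}^{b}(A)}(G^{\bullet},-)\colon\mathcal{C}_{gp}(G^{\bullet})\xrightarrow{\ \sim\ }{\rm mod}B$ the equivalence of Proposition~\ref{heart}(3), with $FG^{\bullet}\cong B$. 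Choose a surjection $B^{n}\lxr FX$ in ${\rm mod}B$ and transport it through $F^{-1}$ to an epimorphism $q\colon(G^{\bullet})^{n}\lxr X$ in $\mathcal{C}_{gp}(G^{\bullet})$; by Proposition~\ref{heart}(1) it fits into a triangle $W^{\bullet}\lxr(G^{\bullet})^{n}\xrightarrow{\ q\ }X\lxr W^{\bullet}[1]$ in $D_{gp}^{b}(A)$ with $W^{\bullet}\in\mathcal{C}_{gp}(G^{\bullet})$, so $H^{i}(W^{\bullet})=0$ for $i\neq-1,0$. Applying Lemma~\ref{triangle} to $(G^{\bullet})^{n}$ gives a triangle $H^{-1}(G^{\bullet})^{n}[1]\lxr(G^{\bullet})^{n}\xrightarrow{\ p\ }T^{n}\lxr H^{-1}(G^{\bullet})^{n}[2]$. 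Since there are no nonzero negative extensions between modules, ${\rm Hom}_{D_{gp}^{b}(A)}(H^{-1}(G^{\bullet})^{n}[1],X)\cong{\rm Hom}_{D_{gp}^{b}(A)}(H^{-1}(G^{\bullet})^{n},X[-1])=0$; applying ${\rm Hom}_{D_{gp}^{b}(A)}(-,X)$ to the second triangle then shows that $q$ factors as $q=q'p$ for a morphism of $A\mbox{-}$modules $q'\colon T^{n}\lxr X$, and $T^{n}\in{\rm add}\,H^{0}(G^{\bullet})$.

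It remains to verify that $0\lxr L\lxr T^{n}\xrightarrow{q'}X\lxr 0$ with $L:={\rm Ker}\,q'$ is $G\mbox{-}$exact and that $L\in\mathcal{T}(G^{\bullet})$; this reduction from the complex-level epimorphism $q$ to the module-level map $q'$ is the crux of the argument, and is the step I expect to be the main obstacle. Applying $H^{0}$ to $q=q'p$ and using the cohomology long exact sequence of the first triangle (where $H^{1}(W^{\bullet})=0$) shows that $H^{0}(q)$ is surjective, whence $q'$ is surjective. For $G\mbox{-}$exactness, take $G\in{\rm Gproj}A$ and $\psi\colon G\lxr X$: since $G$ is Gorenstein-projective one has ${\rm Hom}_{D_{gp}^{b}(A)}(G,N[i])=0$ for all modules $N$ and $i\geq1$, so ${\rm Hom}_{D_{gp}^{b}(A)}(G,W^{\bullet}[1])=0$ by the truncation triangle of $W^{\bullet}$, and therefore $\psi$ lifts along $q$ to some $\widetilde{\psi}\colon G\lxr(G^{\bullet})^{n}$; then $p\widetilde{\psi}\colon G\lxr T^{n}$ satisfies $q'(p\widetilde{\psi})=\psi$, so ${\rm Hom}_{A}(G,q')$ is surjective, as required. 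Finally, the $G\mbox{-}$exact sequence yields a triangle $L\lxr T^{n}\xrightarrow{q'}X\lxr L[1]$ in $D_{gp}^{b}(A)$; applying $F$, the identity $Fq=Fq'\circ Fp$ with $Fq$ surjective forces $Fq'$ to be surjective, while ${\rm Hom}_{D_{gp}^{b}(A)}(G^{\bullet},T^{n}[1])=0$ because $T^{n}\in\mathcal{T}(G^{\bullet})$, and the long exact sequence obtained by applying ${\rm Hom}_{D_{gp}^{b}(A)}(G^{\bullet},-)$ to this triangle then gives ${\rm Hom}_{D_{gp}^{b}(A)}(G^{\bullet},L[1])=0$, i.e. $L\in\mathcal{T}(G^{\bullet})$.
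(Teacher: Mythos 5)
Your proof is correct, and for part (2) it takes a genuinely different route from the paper's. For part (1) the two arguments essentially coincide: you get ${\rm Ext}$-projectivity of ${\rm add}H^{0}(G^{\bullet})$ from $\mathcal{T}(G^{\bullet})=D_{\theta}\subseteq T^{G\perp}$ (Lemma~\ref{silting}(ii) plus Remark~\ref{torequi}), where the paper instead invokes the monomorphism ${\rm Hom}_{D_{gp}^{b}(A)}(H^{0}(G^{\bullet}),X[1])\hookrightarrow{\rm Hom}_{D_{gp}^{b}(A)}(G^{\bullet},X[1])$ of Lemma~\ref{functorialisomorphism}; the converse direction via splitting is identical. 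The real divergence is in (2). The paper simply takes a right ${\rm add}H^{0}(G^{\bullet})$-approximation $T_{0}\lxr X$ and declares it a $G$-epimorphism on the strength of the identity $\mathcal{T}(G^{\bullet})={\rm Fac}H^{0}(G^{\bullet})$ (i.e.\ ${\rm Gen}_{G}(T)$), then shows $L\in\mathcal{T}(G^{\bullet})$ exactly as you do, by applying ${\rm Hom}_{D_{gp}^{b}(A)}(G^{\bullet},-)$. That identity is asserted without proof at that point, and in the paper's own logical order the inclusion $\mathcal{T}(G^{\bullet})\subseteq{\rm Gen}_{G}(T)$ is only established later (Theorem on Gorenstein silting modules vs.\ complexes, direction (2)$\Rightarrow$(1)) \emph{using} this very proposition. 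Your construction — pulling back a surjection $B^{n}\lxr FX$ through the heart equivalence of Proposition~\ref{heart}(3) and then truncating $(G^{\bullet})^{n}$ to $T^{n}$ via Lemma~\ref{triangle} — is longer and leans on more machinery ($H^{i}$ being well defined on $D_{gp}^{b}(A)$ since $A\in{\rm Gproj}A$, vanishing of negative extensions between modules, ${\rm Hom}_{D_{gp}^{b}(A)}(G,N[i])=0$ for $G$ Gorenstein-projective and $i\geq 1$ — all facts the paper itself uses freely), but it is self-contained: it does not presuppose $\mathcal{T}(G^{\bullet})={\rm Gen}_{G}(T)$, and indeed that inclusion drops out of your argument as a corollary. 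So your version buys a cleaner logical order at the cost of a heavier construction, while the paper's is shorter but rests on a forward reference.
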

\begin{proof}\ Assume that $X$ is Ext-projective in $\mathcal{T}(G^{\bullet})$. Since $\mathcal{T}(G^{\bullet})={\rm Fac}H^{0}(G^{\bullet})$, there is a G-exact sequence
$$0\lxr L\lxr T_{0}\stackrel{\alpha}{\lxr} X\lxr 0,\eqno(**)$$
where $T_{0}\stackrel{\alpha}{\lxr} X$ is a right ${\rm add}H^{0}(G^{\bullet})\mbox{-}$approximation. Since ${\rm Hom}_{A}(H^{0}(G^{\bullet}),\ \alpha)$ is an epimorphism, we have that ${\rm Hom}_{D_{gp}^{b}(A)}(G^{\bullet}, \alpha)$ is an epimorphism. Applying ${\rm Hom}_{D_{gp}^{b}(A)}(G^{\bullet}, -)$ to $(**)$, we have an exact sequence
$${\rm Hom}_{D_{gp}^{b}(A)}(G^{\bullet}, T_{0}) \xrightarrow{{\rm Hom}_{D_{gp}^{b}(A)}(G^{\bullet}, \alpha)} {\rm Hom}_{D_{gp}^{b}(A)}(G^{\bullet}, X)\lxr {\rm Hom}_{D_{gp}^{b}(A)}(G^{\bullet}, L[1])\lxr 0.$$
Then ${\rm Hom}_{D_{gp}^{b}(A)}(G^{\bullet}, L[1])=0$ which implies that $L$ is in $\mathcal{T}(G^{\bullet})$. Thus, by assumption, the sequence $(**)$ splits, and hence $X$ is in ${\rm add}H^{0}(G^{\bullet})$.

\vskip 5pt

By the monomorphism in Lemma~\ref{functorialisomorphism}, we have that ${\rm add}H^{0}(G^{\bullet})$ is Ext-projective in $\mathcal{T}(G^{\bullet})$.
\end{proof}

\vskip 10pt

\begin{thm}\ Suppose that $A$ is a Gorenstein algebra of finite CM-type with the Gorenstein projective generator $G$, and ${\rm A(Gproj)}={\rm End}_{A}(E)^{\rm op}$. Let $G^{\bullet}: G_{1}\stackrel{\theta}{\lxr} G_{0}$ be a 2-term complex in $D_{gp}^{b}(A)$, and $T=H^{0}(G^{\bullet})={\rm Coker}\theta$. Then the following statements are equivalent.

\vskip 5pt

\begin{enumerate}
\item $T$ is a Gorenstein silting module with respect to $\theta$ in ${\rm mod}A$.

\vskip 5pt

\item $G^{\bullet}: G_{1}\stackrel{\theta}{\lxr} G_{0}$ is a 2-term Gorenstein silting complex in $D_{gp}^{b}(A)$.
\end{enumerate}
\end{thm}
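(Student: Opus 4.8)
The plan is to reduce the statement to the torsion-pair characterisations already established. Recall that $T$ is a Gorenstein silting module with respect to $\theta$ precisely when $\theta$ is a proper Gorenstein-projective presentation of $T$ and ${\rm Gen}_{G}(T)=D_{\theta}$ (Definition~\ref{Gsmodule}), and that $G^{\bullet}\colon G_{1}\xrightarrow{\theta}G_{0}$ is a 2-term Gorenstein silting complex precisely when $\mathcal{T}(G^{\bullet})\cap\mathcal{F}(G^{\bullet})=0$ and $H^{0}(G^{\bullet})\in\mathcal{T}(G^{\bullet})$ (Theorem~\ref{torsionpair}). So it suffices to match these two sets of conditions, and I would set up two auxiliary facts first. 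The first: the ${\rm Hom}$-computation inside the proof of Remark~\ref{torequi} uses only the isomorphisms ${\rm Hom}_{D_{gp}^{b}(A)}(G_{i},X)\cong{\rm Hom}_{A}(G_{i},X)$ for $G_{i}\in{\rm Gproj}A$ and a module $X$ (together with Lemma~\ref{functorialisomorphism}), not the assumption that $G^{\bullet}$ is already Gorenstein silting; hence for \emph{any} 2-term complex of Gorenstein-projectives with $T={\rm Coker}\theta$ one has, as full subcategories of ${\rm mod}A$,
$$D_{\theta}=\mathcal{T}(G^{\bullet})\qquad\text{and}\qquad T^{\perp 0}=\mathcal{F}(G^{\bullet}).$$
The second: condition (i) of Definition~\ref{Gscomplex} — that $G_{1}\to{\rm Im}d^{1}$ and $G_{0}\to{\rm Coker}d^{1}$ be right ${\rm Gproj}A$-approximations — is equivalent to $\theta$ being a proper Gorenstein-projective presentation of $T$; this I would verify by a short diagram chase on the exact sequence $0\to{\rm Im}\theta\to G_{0}\to T\to 0$, of the sort used in the proof of Lemma~\ref{silting}(ii).

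For $(1)\Rightarrow(2)$ I would proceed as follows. If $T$ is Gorenstein silting with respect to $\theta$, then $\theta$ is a proper presentation, so condition (i) holds. Since $T\in{\rm Gen}_{G}(T)=D_{\theta}=\mathcal{T}(G^{\bullet})$, we obtain $H^{0}(G^{\bullet})=T\in\mathcal{T}(G^{\bullet})$. If $X\in D_{\theta}\cap T^{\perp 0}=\mathcal{T}(G^{\bullet})\cap\mathcal{F}(G^{\bullet})$, then $X\in{\rm Gen}_{G}(T)$ yields a $G$-epimorphism $u\colon T^{(I)}\to X$, while $X\in T^{\perp 0}$ gives ${\rm Hom}_{A}(T^{(I)},X)=0$, so $u=0$ and $X=0$. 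Thus the two conditions of Theorem~\ref{torsionpair}(2) hold, and the implication $(2)\Rightarrow(1)$ of that theorem shows that $G^{\bullet}$ is a 2-term Gorenstein silting complex.

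For $(2)\Rightarrow(1)$ I would use that a 2-term Gorenstein silting complex satisfies condition (i), so $\theta$ is a proper Gorenstein-projective presentation of $T$. By the implications $(1)\Rightarrow(2)$ and $(1)\Rightarrow(4)$ of Theorem~\ref{torsionpair}, $T=H^{0}(G^{\bullet})\in\mathcal{T}(G^{\bullet})=D_{\theta}$ and $(\mathcal{T}(G^{\bullet}),\mathcal{F}(G^{\bullet}))$ is a torsion pair. As $D_{\theta}$ contains $T$ and, by Lemma~\ref{silting}(i), is closed under $G$-epimorphic images and under coproducts (the latter since $G_{0},G_{1}$ are finitely generated, as in the proof of Theorem~\ref{equivalence}), we get ${\rm Gen}_{G}(T)\subseteq D_{\theta}$. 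For the reverse inclusion, given $X\in D_{\theta}=\mathcal{T}(G^{\bullet})$, part (2) of the Proposition preceding this theorem provides a $G$-exact sequence $0\to L\to T_{0}\to X\to 0$ with $T_{0}\in{\rm add}H^{0}(G^{\bullet})={\rm add}T$, whence $X\in{\rm Gen}_{G}(T)$. Therefore ${\rm Gen}_{G}(T)=D_{\theta}$ with $\theta$ a proper presentation, which is exactly the assertion that $T$ is a Gorenstein silting module with respect to $\theta$.

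I expect the only real difficulty to be bookkeeping rather than substance: Remark~\ref{torequi}, Theorem~\ref{torsionpair}, and the preceding Proposition are all phrased with $G^{\bullet}$ assumed 2-term Gorenstein silting, so in the direction $(1)\Rightarrow(2)$ I must invoke only those parts of their proofs that do not use that assumption — in particular, re-deriving $D_{\theta}=\mathcal{T}(G^{\bullet})$ and $T^{\perp 0}=\mathcal{F}(G^{\bullet})$ for a bare 2-term complex of Gorenstein-projectives. The one genuinely new verification, the equivalence of condition (i) with ``proper Gorenstein-projective presentation'', is entirely routine.
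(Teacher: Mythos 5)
Your proposal is correct, and its torsion--pair half ($H^{0}(G^{\bullet})\in\mathcal{T}(G^{\bullet})$, $\mathcal{T}(G^{\bullet})\cap\mathcal{F}(G^{\bullet})=0$, then quoting Theorem~\ref{torsionpair}, and using part (2) of the preceding proposition for ${\rm Gen}_{G}(T)\supseteq D_{\theta}$) coincides with the paper's. Where you genuinely diverge is in how the two ``partial'' notions are linked. The paper first proves, as a separate claim, that $T$ is a partial Gorenstein silting module with respect to $\theta$ if and only if $G^{\bullet}$ is a 2-term partial Gorenstein silting complex, and it does so by applying ${\rm Hom}_{A}(E,-)$, passing to the CM-Auslander algebra ${\rm A(Gproj)}$, and running an explicit null-homotopy computation; this claim is then what delivers $T\in D_{\theta}$ and ${\rm Gen}_{G}(T)\subseteq D_{\theta}$ in the direction $(2)\Rightarrow(1)$. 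You bypass that machinery entirely: you observe that condition (i) of Definition~\ref{Gscomplex} is equivalent to $\theta$ being a proper Gorenstein-projective presentation (a routine diagram chase, and the one point the paper leaves implicit), note that $D_{\theta}=\mathcal{T}(G^{\bullet})$ holds for a bare 2-term complex of Gorenstein-projectives while $T^{\perp 0}=\mathcal{F}(G^{\bullet})$ needs only properness via Lemma~\ref{functorialisomorphism}, and then extract $T\in D_{\theta}$ from Theorem~\ref{torsionpair} together with the closure properties of $D_{\theta}$ from Lemma~\ref{silting}. This buys two things: the argument never leaves ${\rm mod}A$ and $D^{b}_{gp}(A)$ (the finite CM-type hypothesis is not touched in your version of this step), and condition (i) of the definition of a 2-term Gorenstein silting complex is explicitly accounted for in the direction $(1)\Rightarrow(2)$, which the paper's appeal to Theorem~\ref{torsionpair} alone does not visibly supply. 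The only imprecision is your parenthetical that Remark~\ref{torequi} uses no part of the silting hypothesis: the identification $T^{\perp 0}=\mathcal{F}(G^{\bullet})$ does use condition (i) (through Lemma~\ref{triangle}), but since you establish condition (i) $\Leftrightarrow$ properness first and have it available in both directions, nothing is lost.
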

\begin{proof}\ First, we claim that $T$ is a partial Gorenstein silting module with respect to $\theta$ if and only if $G^{\bullet}$ is a 2-term partial Gorenstein silting complex.

\vskip 5pt

Assume that $T$ is a partial Gorenstein silting module. By definition we know that ${\rm Hom}_{A}(\theta, T)$ is an epimorphism. Then we get that ${\rm Hom}_{{\rm A(Gproj)}}((E,\theta), (E, T))$ is an epimorphism from the proof of Proposition ~\ref{properties}(1). Let $\sigma:= {\rm Hom}_{A}(E, \theta)$ and $f:(E, G_{1})\lxr (E, T)$. By the following diagram,
\[
\xymatrix@C=1.5cm{
&  {\rm Hom}_{A}(E, G_{1}) \ar[r]^{\sigma} \ar[dr]^{f} \ar@{-->}[d]^{h} \ar@{-->}[dl]^{s_{1}}  &  {\rm Hom}_{A}(E, G_{0}) \ar@{-->}[d]^{g} \ar@{-->}[dl]^{s_{0}} \\
{\rm Hom}_{A}(E, G_{1}) \ar[r]_{\sigma}  &  {\rm Hom}_{A}(E, G_{0}) \ar[r]_{\pi} & {\rm Hom}_{A}(E, T) \\
}
\]
there exists a morphism $g:{\rm Hom}_{A}(E, G_{0})\lxr {\rm Hom}_{A}(E, T)$, such that $f=g\sigma$. Since ${\rm Hom}_{A}(E, G_{1})$ is projective, there exists a morphism $$h: {\rm Hom}_{A}(E, G_{1})\lxr {\rm Hom}_{A}(E, G_{0}),$$
such that $f=\pi h$. Similarly since ${\rm Hom}_{A}(E, G_{0})$ is projective, there exists a morphism
$$s_{0}: {\rm Hom}_{A}(E, G_{0})\lxr {\rm Hom}_{A}(E, G_{0}),$$
such that $g=\pi s_{0}$. Therefore $\pi h=\pi s_{0}\sigma$, i.e., $\pi (h-s_{0}\sigma)=0$. It follows that there is
$$s_{1}: {\rm Hom}_{A}(E, G_{1})\lxr {\rm Hom}_{A}(E, G_{1}),$$
such that $h-s_{0}\sigma=\sigma s_{1}$, which shows that $h$ is null-homotopic. This implies that ${\rm Hom}_{{\rm A(Gproj)}}((E, G^{\bullet}),\ (E, G^{\bullet})[1])=0$. Therefore, we get that ${\rm Hom}_{D_{gp}^{b}(A)}(G^{\bullet}, G^{\bullet}[1])=0$. This implies that $G^{\bullet}$ is a 2-term partial Gorenstein silting complex.

\vskip 5pt

Conversely, if $G^{\bullet}$ is a 2-term partial Gorenstein silting complex, then by the diagram above, we have that $h=s_{0}\sigma+\sigma s_{1}$. Then $f=\pi h=\pi s_{0}\sigma+\pi \sigma s_{1}=(\pi s_{0})\sigma$, which means that ${\rm Hom}_{{\rm A(Gproj)}}((E,\theta),\ (E, T))$ is an epimorphism. Therefore ${\rm Hom}_{A}(\theta, T)$ is an epimorphism, and so $T\in D_{\theta}$. This implies that $T$ is a partial Gorenstein silting module with respect to $\theta$.

\vskip 5pt

(1)$\Rightarrow$(2)\ By Theorem ~\ref{torsionpair}, we prove that $\mathcal{T}(G^{\bullet})\cap \mathcal{F}(G^{\bullet})={0}$ and $H^{0}(G^{\bullet})\in \mathcal{T}(G^{\bullet})$. From Remark ~\ref{torequi}, we have $T=H^{0}(G^{\bullet})\in D_{\theta}= \mathcal{T}(G^{\bullet})$. Let $X\in \mathcal{T}(G^{\bullet})\cap \mathcal{F}(G^{\bullet})= D_{\theta}\cap T^{\perp 0}= {\rm Gen}_{G}(T)\cap T^{\perp 0}$. Then there is a $G\mbox{-}$epimorphism $T_{0}\lxr X\lxr 0$ with $T_{0}\in {\rm Add}T$, and ${\rm Hom}_{A}(T,X)=0$. Therefore we can get from the induced exact sequence $0\lxr (X, X)\lxr (T_{0}, X)$ that $X=0$. Thus $G^{\bullet}: G_{1}\stackrel{\theta}{\lxr} G_{0}$ is a 2-term Gorenstein silting complex in $D_{gp}^{b}(A)$.

\vskip 5pt

(2)$\Rightarrow$(1)\ By the above claim, we see that $T$ is a partial Gorenstein silting module, and so ${\rm Gen}_{G}(T)\subseteq D_{\theta}$. From Proposition 3.9, for any $X\in D_{\theta}=\mathcal{T}(G^{\bullet})$, there is a $G$-exact sequence $$0\lxr L\lxr T_{0}\lxr X\lxr 0$$
with $T_{0}\in {\rm add}H^{0}(G^{\bullet})={\rm add}T$ and $L\in \mathcal{T}(G^{\bullet})$. Then we get that $X\in {\rm Gen}_{G}(T)$. Therefore $T$ is a Gorenstein silting module with respect to $\theta$ in ${\rm mod}A$.
\end{proof}

\vskip 10pt

Let $B={\rm End}_{D_{gp}^{b}(A)}(G^{\bullet})^{op}$. Consider the subcategories of ${\rm mod}B$
$$\mathcal{X}(G^{\bullet})= {\rm Hom}_{D_{gp}^{b}(A)}(G^{\bullet},\ \mathcal{F}(G^{\bullet})[1]) \ \ \ and \ \ \ \mathcal{Y}(G^{\bullet})= {\rm Hom}_{D_{gp}^{b}(A)}(G^{\bullet},\ \mathcal{T}(G^{\bullet})).$$
Then we can draw the Brenner-Butler theorem in this setting.

\vskip 5pt

\begin{thm}\
\label{torsioninb}\ Let $G^{\bullet}$ be a 2-term Gorenstein silting complex in $D_{gp}^{b}(A)$. Then $(\mathcal{X}(G^{\bullet}), \mathcal{Y}(G^{\bullet}))$ is a torsion pair in ${\rm mod}B$ and there are equivalences
$${\rm Hom}_{D_{gp}^{b}(A)}(G^{\bullet}, -): \mathcal{T}(G^{\bullet})\lxr \mathcal{Y}(G^{\bullet}),$$
and
$${\rm Hom}_{D_{gp}^{b}(A)}(G^{\bullet}, -[1]): \mathcal{F}(G^{\bullet})\lxr \mathcal{X}(G^{\bullet}).$$
The equivalences send $G$-exact sequences with terms in $\mathcal{T}(G^{\bullet})$ {\rm (resp.} $\mathcal{F}(G^{\bullet})${\rm )} to short exact sequences in ${\rm mod}B$.
\end{thm}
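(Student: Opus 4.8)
The plan is to reduce everything to the heart $\mathcal{C}_{gp}(G^{\bullet})$ of the t-structure $(D_{gp}^{\leq 0}(G^{\bullet}),\ D_{gp}^{\geq 0}(G^{\bullet}))$ already analysed in Proposition~\ref{heart}. Write $H=\mathrm{Hom}_{D_{gp}^{b}(A)}(G^{\bullet},-)$. By Proposition~\ref{heart}(3) the restriction of $H$ to $\mathcal{C}_{gp}(G^{\bullet})$ is an equivalence of abelian categories onto $\mathrm{mod}B$, hence in particular exact. By Proposition~\ref{heart}(2), a module $T$ regarded as a complex concentrated in degree $0$ lies in $\mathcal{C}_{gp}(G^{\bullet})$ precisely when $T\in\mathcal{T}(G^{\bullet})$, and a module $Y$ regarded via $Y[1]$ (a complex concentrated in degree $-1$) lies in $\mathcal{C}_{gp}(G^{\bullet})$ precisely when $Y\in\mathcal{F}(G^{\bullet})$. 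Since the morphism sets in $D_{gp}^{b}(A)$ between modules agree with those in $\mathrm{mod}A$ (as used repeatedly above), $\mathcal{T}(G^{\bullet})$ and $\mathcal{F}(G^{\bullet})[1]$ are full subcategories of the heart.

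The two equivalences then come essentially for free: since $\mathcal{Y}(G^{\bullet})=H(\mathcal{T}(G^{\bullet}))$ by definition and $H|_{\mathcal{C}_{gp}(G^{\bullet})}$ is fully faithful, the functor $\mathrm{Hom}_{D_{gp}^{b}(A)}(G^{\bullet},-)\colon\mathcal{T}(G^{\bullet})\to\mathcal{Y}(G^{\bullet})$ is an equivalence; likewise $H\colon\mathcal{F}(G^{\bullet})[1]\to\mathcal{X}(G^{\bullet})$ is an equivalence, and composing with the shift autoequivalence $[1]\colon\mathcal{F}(G^{\bullet})\to\mathcal{F}(G^{\bullet})[1]$ gives the stated equivalence $\mathrm{Hom}_{D_{gp}^{b}(A)}(G^{\bullet},-[1])\colon\mathcal{F}(G^{\bullet})\to\mathcal{X}(G^{\bullet})$.

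For the torsion pair in $\mathrm{mod}B$ I would transport one along $H$. First I would show that $(\mathcal{F}(G^{\bullet})[1],\ \mathcal{T}(G^{\bullet}))$ is a torsion pair in $\mathcal{C}_{gp}(G^{\bullet})$ with torsion class $\mathcal{F}(G^{\bullet})[1]$: the orthogonality $\mathrm{Hom}_{\mathcal{C}_{gp}(G^{\bullet})}(Y[1],T)=\mathrm{Hom}_{D_{gp}^{b}(A)}(Y,T[-1])=0$ for $Y\in\mathcal{F}(G^{\bullet})$, $T\in\mathcal{T}(G^{\bullet})$ is the vanishing of negative extensions between modules already used in Lemma~\ref{functorialisomorphism}, while for an arbitrary $X^{\bullet}\in\mathcal{C}_{gp}(G^{\bullet})$ the truncation triangle $H^{-1}(X^{\bullet})[1]\to X^{\bullet}\to H^{0}(X^{\bullet})\to H^{-1}(X^{\bullet})[2]$ has all three vertices in the heart (by Proposition~\ref{heart}(2), since $H^{0}(X^{\bullet})\in\mathcal{T}(G^{\bullet})$ and $H^{-1}(X^{\bullet})\in\mathcal{F}(G^{\bullet})$), hence by Proposition~\ref{heart}(1) is a short exact sequence $0\to H^{-1}(X^{\bullet})[1]\to X^{\bullet}\to H^{0}(X^{\bullet})\to 0$ in $\mathcal{C}_{gp}(G^{\bullet})$, which is exactly the required torsion--torsion-free decomposition. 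Applying the exact equivalence $H$ turns this into the assertion that $(\mathcal{X}(G^{\bullet}),\ \mathcal{Y}(G^{\bullet}))=(H(\mathcal{F}(G^{\bullet})[1]),\ H(\mathcal{T}(G^{\bullet})))$ is a torsion pair in $\mathrm{mod}B$.

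Finally, for the exactness of the equivalences: a $G$-exact sequence $0\to X\to Y\to Z\to 0$ in $\mathrm{mod}A$ with all terms in $\mathcal{T}(G^{\bullet})$ induces, by \cite{GZ}, a triangle $X\to Y\to Z\to X[1]$ in $D_{gp}^{b}(A)$ whose three vertices all lie in $\mathcal{C}_{gp}(G^{\bullet})$; by Proposition~\ref{heart}(1) this triangle is a short exact sequence in the heart, and applying the exact functor $H$ yields a short exact sequence in $\mathrm{mod}B$. The case of terms in $\mathcal{F}(G^{\bullet})$ is identical after shifting the triangle by $[1]$, so that all vertices become complexes concentrated in degree $-1$ and again lie in the heart. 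I expect the only real friction to be the triangulated bookkeeping around the truncation triangle --- checking carefully that its vertices lie in the heart and that it realises the torsion decomposition --- together with the routine but necessary identification of $\mathrm{Hom}$-groups across $\mathrm{mod}A$, $D_{gp}^{b}(A)$ and $\mathrm{mod}B$; once $D_{gp}^{b}(A)$ and the t-structure of Proposition~\ref{heart} are in hand, the argument runs exactly as the classical Brenner--Butler / HRS tilt argument.
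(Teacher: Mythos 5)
Your proposal is correct and takes essentially the same route as the paper: the paper's proof is precisely "this follows from Proposition~\ref{heart}(1) and (3), using that $\mathcal{T}(G^{\bullet})$ and $\mathcal{F}(G^{\bullet})[1]$ lie in the heart $\mathcal{C}_{gp}(G^{\bullet})$", and your write-up is a careful expansion of that one-line argument (transporting the HRS-type torsion pair $(\mathcal{F}(G^{\bullet})[1],\mathcal{T}(G^{\bullet}))$ of the heart through the equivalence $\mathrm{Hom}_{D_{gp}^{b}(A)}(G^{\bullet},-)$). No substantive difference.
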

\begin{proof}\ This follows from Proposition ~\ref{heart} (1) and (3), using that $\mathcal{T}(G^{\bullet})\cup \mathcal{F}(G^{\bullet})\subset \mathcal{C}_{gp}(G^{\bullet})$.
\end{proof}

\vskip 10pt

We finish this section with an interesting property of the 2-term Gorenstein silting complex over a finite dimensional Gorenstein algebra $A$ of finite CM-type with the Gorenstein-projective generator $E$.

\vskip 10pt

Let $G^{\bullet}: G_{1}\stackrel{d^{1}}{\lxr} G_{0}$ be the 2-term complex over ${\rm Gproj}A$, and set
$$P^{\bullet}: {\rm Hom}_{A}(E, G_{1})\lxr {\rm Hom}_{A}(E, G_{0}).$$

\vskip 5pt

\begin{prop}\ Suppose that $A$ is a Gorenstein algebra. Then  $G^{\bullet}$ is a 2-term Gorenstein silting complex in
$D_{gp}^{b}(A)$ if and only if $P^{\bullet}$ is a 2-term silting complex in $D^{b}({\rm A(Gproj)})$.
\end{prop}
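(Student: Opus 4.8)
The plan is to realise $P^{\bullet}$ as the image of $G^{\bullet}$ under the standard triangle equivalence attached to $(E,-):={\rm Hom}_{A}(E,-)$, and then to transport the two defining conditions of ``2-term Gorenstein silting'' and ``2-term silting'' along it. First I would recall that, since $A$ is of finite CM-type with $E$ the direct sum of all indecomposable Gorenstein-projective modules, one has ${\rm Gproj}A={\rm add}E$, and $(E,-)$ restricts to an equivalence of additive categories ${\rm Gproj}A\xrightarrow{\ \sim\ }{\rm proj}\,{\rm A(Gproj)}$ (fully faithful because ${\rm Hom}_{{\rm A(Gproj)}}((E,X),(E,Y))\cong{\rm Hom}_{A}(X,Y)$ for $X,Y\in{\rm add}E$, and dense because $E\mapsto{\rm A(Gproj)}$). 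Applying it termwise yields a triangle equivalence $K^{b}({\rm Gproj}A)\xrightarrow{\ \sim\ }K^{b}({\rm proj}\,{\rm A(Gproj)})$ taking $G^{\bullet}$ to $P^{\bullet}$. Moreover the canonical functor $K^{b}({\rm Gproj}A)\to D_{gp}^{b}(A)$ is fully faithful, and, as $A$ is Gorenstein so that ${\rm Gdim}A<\infty$, it is in fact a triangle equivalence (\cite{GZ}); also, $A$ being Gorenstein, ${\rm gldim}\,{\rm A(Gproj)}<\infty$ (\cite{B2}), so $K^{b}({\rm proj}\,{\rm A(Gproj)})=D^{b}({\rm A(Gproj)})$. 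Composing everything, one obtains a triangle equivalence $\Phi\colon D_{gp}^{b}(A)\xrightarrow{\ \sim\ }D^{b}({\rm A(Gproj)})$ with $\Phi(G^{\bullet})=P^{\bullet}$.

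It then remains to check that the three pieces of data match under $\Phi$. Since $\Phi$ is exact and $\Phi(G^{\bullet})=P^{\bullet}$, condition (ii) of Definition~\ref{Gscomplex}, namely ${\rm Hom}_{D_{gp}^{b}(A)}(G^{\bullet},G^{\bullet}[1])=0$, is equivalent to $P^{\bullet}$ being presilting, and ${\rm thick}\,G^{\bullet}=D_{gp}^{b}(A)$ (equivalently $=K^{b}({\rm Gproj}A)$) is equivalent to ${\rm thick}\,P^{\bullet}=D^{b}({\rm A(Gproj)})$. For the approximation condition (i): applying $(E,-)$ and using that $E$ generates ${\rm Gproj}A$, (i) says precisely that $(E,G_{1})\to(E,{\rm Im}d^{1})$ and $(E,G_{0})\to(E,{\rm Coker}d^{1})$ are surjective, which by left-exactness of $(E,-)$ is the statement that $(E,d^{1})$ has cokernel $(E,{\rm Coker}d^{1})=(E,H^{0}(G^{\bullet}))$, i.e. that $P^{\bullet}$ is a projective presentation of $(E,H^{0}(G^{\bullet}))$; this is automatic for a 2-term complex of projectives, and demanding a proper Gorenstein-projective presentation on the $A$-side is exactly what guarantees it. As a cross-check one may instead route the argument through the Theorem preceding this Proposition together with Theorem~\ref{equivalence} and Proposition~\ref{properties}(2): $G^{\bullet}$ is 2-term Gorenstein silting $\iff$ $H^{0}(G^{\bullet})$ is Gorenstein silting over $A$ $\iff$ $(E,H^{0}(G^{\bullet}))$ is silting over ${\rm A(Gproj)}$ $\iff$ $P^{\bullet}$ is 2-term silting, the last step being that same Theorem applied to the Gorenstein algebra ${\rm A(Gproj)}$ (which is of finite CM-type, with ${\rm Gproj}\,{\rm A(Gproj)}={\rm proj}\,{\rm A(Gproj)}$ since ${\rm gldim}\,{\rm A(Gproj)}<\infty$). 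Assembling the three equivalences gives the Proposition.

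The step I expect to be the main obstacle is the bookkeeping around condition (i): one must be careful that, along $\Phi$, the relative ($G$-exact) homological algebra on ${\rm Gproj}A$ is correctly matched with the ordinary one on ${\rm proj}\,{\rm A(Gproj)}$ only after one knows that $(E,-)$ computes the image and cokernel of $d^{1}$ faithfully, which is precisely (i) — so that the two statements are genuinely comparable in the ``if'' direction as well as the ``only if'' direction; routing through the module-level correspondence as above is the cleanest way to keep track of this. The other essential use of the hypothesis is the finiteness of ${\rm gldim}\,{\rm A(Gproj)}$: without $A$ Gorenstein one could not identify $K^{b}({\rm proj}\,{\rm A(Gproj)})$ with $D^{b}({\rm A(Gproj)})$, and in fact no 2-term complex of projectives could generate $D^{b}({\rm A(Gproj)})$ as a thick subcategory.
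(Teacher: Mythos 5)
Your proof takes essentially the same route as the paper's: realise $P^{\bullet}$ as the image of $G^{\bullet}$ under the triangle equivalence $D_{gp}^{b}(A)\simeq D^{b}({\rm A(Gproj)})$ induced by ${\rm Hom}_{A}(E,-)$ (citing \cite{GZ}), and transport the presilting and thick-generation conditions across that equivalence using fully faithfulness. Your added remark on condition (i) is circular as written --- ``demanding a proper presentation on the $A$-side is exactly what guarantees it'' does not actually show (i) holds in the ``if'' direction --- but the paper's one-line proof passes over this point as well, and your module-level cross-check via Theorem~\ref{equivalence} and Proposition~\ref{properties} is a reasonable supplement, so the core argument matches the paper's.
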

\begin{proof}\ Since ${\rm Hom}_{A}(E, -)$ is a fully faithful functor, we have that
$${\rm Hom}_{D^{b}({\rm A(Gproj)})}(P^{\bullet},\ P^{\bullet}[1])\cong {\rm Hom}_{D_{gp}^{b}(A)}(G^{\bullet}, G^{\bullet}[1]).$$
On the other hand, from \cite{GZ}, there is a triangle-equivalence $D_{gp}^{b}(A)\cong D^{b}({\rm A(Gproj)})$ induced by ${\rm Hom}_{A}(E, -)$. This completes the proof.
\end{proof}

\vskip 10pt

\subsection{\bf On global dimension} \  In this subsection, we compare the global dimension between $A$ and $B$, where  $G^{\bullet}$ is a 2-term Gorenstein silting complex in $D_{gp}^{b}(A)$, and  $B={\rm End}_{D_{gp}^{b}(A)}(G^{\bullet})^{op}$.

\vskip 10pt

Recall from \cite{IY}, for full subcategories $\mathcal{X}$ and $\mathcal{Y}$ of $D^{b}_{gp}(A)$, denote $$\mathcal{X}\ast \mathcal{Y}:=\{Z\in D^{b}_{gp}(A)\mid {\rm there\ exists\ a\ triangle}\ X\lxr Z\lxr Y\lxr X[1]$$
$${\rm in}\ D^{b}_{gp}(A)\ {\rm with}\ X\in \mathcal{X}\ {\rm and}\ Y\in \mathcal{Y}\}.$$
By the octahedral axiom, we have that $(\mathcal{X}\ast \mathcal{Y})\ast \mathcal{Z}=\mathcal{X}\ast(\mathcal{Y}\ast \mathcal{Z})$. Call $\mathcal{X}$ extension closed if $\mathcal{X}\ast \mathcal{X}=\mathcal{X}$. Now fix ${\rm GP}={\rm add}G^{\bullet}$ and ${\rm GP_{c}}={\rm GP}\cap \mathcal{C}_{gp}(G^{\bullet})$.

\vskip 10pt

\begin{lem}\
\label{c1}
$\mathcal{C}_{gp}(G^{\bullet})\subset {\rm GP}\ast {\rm GP}[1]\ast \cdots \ast {\rm GP}[d+1]$ for some non-negative integer $d$.
\end{lem}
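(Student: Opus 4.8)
The plan is to build a resolution of an arbitrary object $Z^{\bullet}\in\mathcal{C}_{gp}(G^{\bullet})$ by objects of $\mathrm{GP}={\rm add}G^{\bullet}$ with controlled length, using the triangle $(\triangle_{G^{\bullet}})$ from Proposition~\ref{heart}(4) as the engine. First I would recall that for any $E\in{\rm Gproj}A$ there is a triangle
$$E\stackrel{e}{\lxr} G^{\prime\bullet}\stackrel{f}{\lxr} G^{\prime\prime\bullet}\stackrel{g}{\lxr} E[1]$$
with $G^{\prime\bullet},G^{\prime\prime\bullet}\in{\rm add}G^{\bullet}$, and that by Lemma~\ref{triangle} every such $E$ (viewed as a stalk complex, hence as $H^{0}$ of a 2-term complex) sits in $D_{gp}^{b}(A)$ controlled by the bounded-below/bounded-above structure. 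The key input is that $D_{gp}^{b}(A)\simeq D^{b}({\rm A(Gproj)})$ (from \cite{GZ}, via ${\rm Hom}_{A}(E,-)$), so ${\rm GP}={\rm add}G^{\bullet}$ corresponds to ${\rm add}$ of a 2-term silting complex over ${\rm A(Gproj)}$, and $d$ is intended to be ${\rm gldim}\,{\rm A(Gproj)}$ (equivalently, bounded by the global/Gorenstein dimension of $A$ via Theorem~\ref{gldim}-type estimates).

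The main steps, in order: (i) Show that every object of $\mathcal{C}_{gp}(G^{\bullet})$ is a finite iterated extension of shifts of ${\rm Gproj}A$-stalk complexes; concretely, an object $X^{\bullet}$ of the heart has, by Proposition~\ref{heart}(2), only $H^{-1}(X^{\bullet})\in\mathcal{F}(G^{\bullet})$ and $H^{0}(X^{\bullet})\in\mathcal{T}(G^{\bullet})$ nonzero, so it lies in $({\rm mod}A)[0]\ast({\rm mod}A)[1]$, and each module admits a proper Gorenstein-projective resolution of length $\le d$ (this is where ${\rm Gdim}A$ enters, or rather its consequence that ${\rm A(Gproj)}$ has finite global dimension). (ii) Splice these proper resolutions into the derived category: a proper ${\rm Gproj}A$-resolution $0\to G_{n}\to\cdots\to G_{0}\to M\to 0$ gives, for the stalk $M[0]$, a filtration $M[0]\in G_{0}[0]\ast G_{1}[1]\ast\cdots\ast G_{n}[n]$ inside $D_{gp}^{b}(A)$, because each $G$-exact short exact sequence becomes a triangle (this fact is quoted just after the definition of $D_{gp}^{b}(A)$). (iii) Replace each stalk $G_{i}[i]$ by objects of ${\rm GP}$: apply $(\triangle_{G^{\bullet}})$ to $G_{i}\in{\rm Gproj}A$ to write $G_{i}[0]\in G^{\prime\bullet}[0]\ast G^{\prime\prime\bullet}[-1]$, hence (shifting) $G_{i}[i]$ lands in ${\rm GP}[i-1]\ast{\rm GP}[i]$; after combining with the bound from (i)–(ii) and using the associativity $(\mathcal{X}\ast\mathcal{Y})\ast\mathcal{Z}=\mathcal{X}\ast(\mathcal{Y}\ast\mathcal{Z})$ together with the vanishing ${\rm Hom}(G^{\bullet},G^{\bullet}[i])=0$ for $i>0$ to push all ${\rm GP}$-terms into a single ascending window, one obtains $\mathcal{C}_{gp}(G^{\bullet})\subset{\rm GP}\ast{\rm GP}[1]\ast\cdots\ast{\rm GP}[d+1]$, where the extra $+1$ is exactly the one shift contributed by the two-term nature of the triangle $(\triangle_{G^{\bullet}})$.

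The hard part will be the bookkeeping in step (iii): when one replaces a single stalk $G_{i}[i]$ by a two-term spread ${\rm GP}[i-1]\ast{\rm GP}[i]$ and then tries to interleave these spreads across $i=0,\dots,d$, the naive bound would give a window of width $2d$ rather than $d+1$. To get the sharp bound one must reuse the same triangle $(\triangle_{G^{\bullet}})$ coherently — i.e. process the proper resolution of $M$ not stalk-by-stalk but by lifting the whole complex $(G_{n}\to\cdots\to G_{0})$ to a complex of objects in ${\rm GP}$ via the functor ${\rm Hom}_{A}(E,-)$ and the equivalence $D_{gp}^{b}(A)\simeq D^{b}({\rm A(Gproj)})$, where the statement becomes the standard fact that any object of the heart of a silting t-structure has ${\rm add}(\text{silting})$-resolution dimension at most ${\rm gldim}$ of the endomorphism ring. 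So I expect the cleanest route is: transport everything to $D^{b}({\rm A(Gproj)})$, invoke the known finite-length filtration of heart objects by ${\rm add}$ of the silting complex (with length controlled by ${\rm gldim}\,{\rm A(Gproj)}$, which is finite since $A$ is Gorenstein of finite CM-type — indeed ${\rm gldim}\,{\rm A(Gproj)}\le{\rm Gdim}A+1$ by the kind of argument in Theorem~\ref{gldim}), and then transport back, reading off $d$ from that global dimension.
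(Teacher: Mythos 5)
There is a genuine gap here, and you in fact flag it yourself: the stalk‑by‑stalk replacement via $(\triangle_{G^{\bullet}})$ gives a window of width roughly $2d$, not $d+1$, and your proposed fix is not carried out. The paper does not build the $*$-filtration from scratch at all. It starts from the a priori containment
$\mathcal{C}_{gp}(G^{\bullet})\subset D_{gp}^{\leq 0}(G^{\bullet})\subset {\rm GP}\ast {\rm GP}[1]\ast \cdots \ast {\rm GP}[l]$
for some $l$, which is part of the silting (co‑)t‑structure package, and then \emph{truncates} this filtration. Concretely, for $X^{\bullet}$ in the heart one replaces $X^{\bullet}$ by a $\mathcal{GP}$-quasi-isomorphic bounded complex $P^{\bullet}$ of (Gorenstein‑)projectives supported in degrees $[-d-1,0]$ — here $d={\rm Gdim}A$ enters, together with the fact that $X^{\bullet}$ has cohomology only in degrees $-1,0$ — which gives ${\rm Hom}_{D_{gp}^{b}(A)}(X^{\bullet}, G^{\bullet}[i])=0$ for $i\geq d+2$. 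Combined with ${\rm Hom}_{D_{gp}^{b}(A)}(G^{\bullet}, G^{\bullet}[i])=0$ for $i>0$, this vanishing forces the top of the $*$-filtration to be trivial and so $X^{\bullet}\in {\rm GP}\ast\cdots\ast{\rm GP}[d+1]$. The triangle $(\triangle_{G^{\bullet}})$ plays no role.

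Your escape hatch — pass to $D^{b}({\rm A(Gproj)})$ and invoke a ``known'' finite filtration of heart objects by ${\rm add}$ of a silting complex with length controlled by ${\rm gldim}\,{\rm A(Gproj)}$ — does not close the gap for three reasons. First, it requires $A$ to be Gorenstein of finite CM‑type, hypotheses the lemma does not assume and the paper's proof does not use. Second, the precise filtration‑length bound in terms of the endomorphism algebra's global dimension is exactly the content one is trying to prove, not a black‑box citation. Third, the identification of $d$ is wrong: Theorem~\ref{gldim} needs the $d$ of this lemma to be ${\rm Gdim}A$, whereas you aim for $d={\rm gldim}\,{\rm A(Gproj)}$; these are different integers in general, so even if your transport argument were completed it would not feed into Theorem~\ref{gldim} as written. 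The key idea you are missing is the orthogonality‑based truncation of an existing $*$-filtration, not a constructive replacement of stalk complexes.
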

\begin{proof}\ Note that
$$\mathcal{C}_{gp}(G^{\bullet})\subset D_{gp}^{\leq 0}(G^{\bullet})\subset {\rm GP}\ast {\rm GP}[1]\ast \cdots \ast {\rm GP}[l-1]\ast {\rm GP}[l]$$
for some $l>0$. For any $X^{\bullet}\in \mathcal{C}_{gp}(G^{\bullet})$, we have $H^{i}(X^{\bullet})=0$ for $i\neq -1,0$. Taking a projective resolution $P^{\bullet}$ of  $X^{\bullet}$, then there exists some non-negative integer $d$ such that $H^{i}(P^{\bullet})=0$ for $i>0$ or $i<-d-1$. Therefore
$${\rm Hom}_{D_{gp}^{b}(A)}(X^{\bullet},\ G^{\bullet}[i])\cong {\rm Hom}_{D_{gp}^{b}(A)}(P^{\bullet},\ G^{\bullet}[i])=0,\ i\geq d+2,$$
which implies that $X^{\bullet}\in {\rm GP}\ast {\rm GP}[1]\ast \cdots \ast {\rm GP}[d+1]$.
\end{proof}

\vskip 10pt

\begin{lem}\
\label{c2}
For the complex $X^{\bullet}\in \mathcal{C}_{gp}(G^{\bullet})\cap ({\rm GP_{c}}\ast {\rm GP_{c}}[1]\ast \cdots \ast {\rm GP_{c}}[m])$ for some $m\geq 0$, we have ${\rm pd}{\rm Hom}_{D_{gp}^{b}(A)}(G^{\bullet}, X^{\bullet})_{B}\leq m$.
\end{lem}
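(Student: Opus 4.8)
The plan is to argue by induction on $m$, peeling off one layer of the $\ast$-filtration at each step and transporting the resulting short exact sequence through the equivalence ${\rm Hom}_{D_{gp}^{b}(A)}(G^{\bullet},-)\colon\mathcal{C}_{gp}(G^{\bullet})\lxr{\rm mod}B$ of Proposition~\ref{heart}(3). In the base case $m=0$ one has $X^{\bullet}\in\mathcal{C}_{gp}(G^{\bullet})\cap{\rm GP_{c}}={\rm GP_{c}}\subseteq{\rm add}G^{\bullet}$, so ${\rm Hom}_{D_{gp}^{b}(A)}(G^{\bullet},X^{\bullet})$ is a direct summand of a finite direct sum of copies of the $B$-module ${\rm Hom}_{D_{gp}^{b}(A)}(G^{\bullet},G^{\bullet})\cong B$; hence it is projective and has projective dimension $0$.

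For the inductive step, assume the statement for $m-1$ and take $X^{\bullet}\in\mathcal{C}_{gp}(G^{\bullet})\cap({\rm GP_{c}}\ast{\rm GP_{c}}[1]\ast\cdots\ast{\rm GP_{c}}[m])$. Using the associativity of $\ast$ recorded above and the fact that $[1]$ is an autoequivalence, one rewrites ${\rm GP_{c}}\ast{\rm GP_{c}}[1]\ast\cdots\ast{\rm GP_{c}}[m]={\rm GP_{c}}\ast\bigl({\rm GP_{c}}\ast{\rm GP_{c}}[1]\ast\cdots\ast{\rm GP_{c}}[m-1]\bigr)[1]$, so that $X^{\bullet}$ fits into a triangle $V^{\bullet}\lxr P_{0}\lxr X^{\bullet}\lxr V^{\bullet}[1]$ in $D_{gp}^{b}(A)$ with $P_{0}\in{\rm GP_{c}}$ and $V^{\bullet}\in{\rm GP_{c}}\ast{\rm GP_{c}}[1]\ast\cdots\ast{\rm GP_{c}}[m-1]$.

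The key point, which I expect to be the main obstacle, is to show that $V^{\bullet}$ again lies in the heart $\mathcal{C}_{gp}(G^{\bullet})$. On the one hand, $\mathcal{C}_{gp}(G^{\bullet})\subseteq D_{gp}^{\leq 0}(G^{\bullet})$ and the aisle $D_{gp}^{\leq 0}(G^{\bullet})$ is closed under the shift $[1]$ and under extensions; hence ${\rm GP_{c}}[j]\subseteq D_{gp}^{\leq 0}(G^{\bullet})$ for $0\leq j\leq m-1$, and so $V^{\bullet}\in D_{gp}^{\leq 0}(G^{\bullet})$. On the other hand, rotating the triangle to $X^{\bullet}[-1]\lxr V^{\bullet}\lxr P_{0}\lxr X^{\bullet}$ and using $P_{0},X^{\bullet}\in\mathcal{C}_{gp}(G^{\bullet})\subseteq D_{gp}^{\geq 0}(G^{\bullet})$ together with $X^{\bullet}[-1]\in D_{gp}^{\geq 1}(G^{\bullet})\subseteq D_{gp}^{\geq 0}(G^{\bullet})$, we conclude $V^{\bullet}\in D_{gp}^{\geq 0}(G^{\bullet})$ because the co-aisle is closed under extensions. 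Therefore $V^{\bullet}\in D_{gp}^{\leq 0}(G^{\bullet})\cap D_{gp}^{\geq 0}(G^{\bullet})=\mathcal{C}_{gp}(G^{\bullet})$.

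Once this is established, the triangle $V^{\bullet}\lxr P_{0}\lxr X^{\bullet}\lxr V^{\bullet}[1]$ has all three vertices in $\mathcal{C}_{gp}(G^{\bullet})$, so by Proposition~\ref{heart}(1) it is a short exact sequence $0\lxr V^{\bullet}\lxr P_{0}\lxr X^{\bullet}\lxr 0$ in $\mathcal{C}_{gp}(G^{\bullet})$. Applying the equivalence of Proposition~\ref{heart}(3) gives a short exact sequence of $B$-modules whose middle term ${\rm Hom}_{D_{gp}^{b}(A)}(G^{\bullet},P_{0})$ is projective, by the computation in the base case applied to $P_{0}\in{\rm GP_{c}}$. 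Since $V^{\bullet}\in\mathcal{C}_{gp}(G^{\bullet})\cap({\rm GP_{c}}\ast{\rm GP_{c}}[1]\ast\cdots\ast{\rm GP_{c}}[m-1])$, the induction hypothesis yields ${\rm pd}\,{\rm Hom}_{D_{gp}^{b}(A)}(G^{\bullet},V^{\bullet})_{B}\leq m-1$, and from the short exact sequence with projective middle term we obtain ${\rm pd}\,{\rm Hom}_{D_{gp}^{b}(A)}(G^{\bullet},X^{\bullet})_{B}\leq m$, completing the induction.
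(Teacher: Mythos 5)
Your proof is correct and follows essentially the same strategy as the paper: peel off one ${\rm GP_{c}}$-layer of the $\ast$-filtration at a time and apply ${\rm Hom}_{D_{gp}^{b}(A)}(G^{\bullet},-)$ to the resulting triangles, bounding the projective dimension inductively. The only real difference is that you verify the intermediate object $V^{\bullet}$ lies in the heart $\mathcal{C}_{gp}(G^{\bullet})$ and invoke the abelian equivalence of Proposition~\ref{heart}(3) to obtain a genuine short exact sequence in ${\rm mod}B$, whereas the paper only uses that each $O_{i}^{\bullet}\lxr X_{i}^{\bullet}$ is a right ${\rm GP}$-approximation to get right-exact sequences which it splices into a projective resolution; your extra step makes the inequality ${\rm pd}\,{\rm Hom}_{D_{gp}^{b}(A)}(G^{\bullet},X^{\bullet})_{B}\leq {\rm pd}\,{\rm Hom}_{D_{gp}^{b}(A)}(G^{\bullet},V^{\bullet})_{B}+1$ completely transparent.
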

\begin{proof}\ Let $X_{0}^{\bullet}=X^{\bullet}$. There are triangles
$$X_{i+1}^{\bullet}\lxr O_{i}^{\bullet}\stackrel{g_{i}}{\lxr} X_{i}^{\bullet}\lxr X_{i+1}^{\bullet}[1],\ \ \ 0\leq i\leq m-1$$
where $O_{i}^{\bullet}\in {\rm GP_{c}}$ and $X_{i}^{\bullet}\in {\rm GP_{c}}\ast {\rm GP_{c}}[1]\ast \cdots \ast {\rm GP_{c}}[m-i]$. Since ${\rm Hom}_{D_{gp}^{b}(A)}(G^{\bullet}, G^{\bullet}[i])=0$ for all $i>0$, we have that $g_{i}$ is a right ${\rm GP}\mbox{-}$approximation of $X_{i}^{\bullet}$. Then we get the following induced exact sequence
$${\rm Hom}_{D_{gp}^{b}(A)}(G^{\bullet}, X_{i+1}^{\bullet})\lxr {\rm Hom}_{D_{gp}^{b}(A)}(G^{\bullet}, O_{i}^{\bullet})\xrightarrow{{\rm Hom}_{D_{gp}^{b}(A)}(G^{\bullet}, g_{i})} {\rm Hom}_{D_{gp}^{b}(A)}(G^{\bullet}, X_{i}^{\bullet})\lxr 0.$$
Then we get that
$${\rm pd} {\rm Hom}_{D_{gp}^{b}(A)}(G^{\bullet}, X_{i}^{\bullet})_{B}\leq {\rm pd} {\rm Hom}_{D_{gp}^{b}(A)}(G^{\bullet}, X_{i+1}^{\bullet})_{B}+1.$$
Therefore ${\rm pd}{\rm Hom}_{D_{gp}^{b}(A)}(G^{\bullet}, X^{\bullet})_{B}\leq {\rm pd}{\rm Hom}_{D_{gp}^{b}(A)}(G^{\bullet}, X_{m}^{\bullet})_{B}+m=m$.
\end{proof}

\vskip 10pt

\begin{thm}\
\label{gldim}\ Assume that $A$ has Gorenstein dimension $d$ for some positive integer $d$. Then ${\rm gldim} B\leq d+1$.
\end{thm}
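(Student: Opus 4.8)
The plan is to reduce the whole statement to the abelian equivalence of Proposition~\ref{heart}(3) and then feed the filtrations of Lemma~\ref{c1} into the projective-dimension estimate of Lemma~\ref{c2}.

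First I would record that, by Proposition~\ref{heart}(3), the functor ${\rm Hom}_{D_{gp}^{b}(A)}(G^{\bullet},-)$ is an equivalence $\mathcal{C}_{gp}(G^{\bullet})\xrightarrow{\ \sim\ }{\rm mod}B$ of abelian categories. Hence every object of ${\rm mod}B$ is isomorphic to ${\rm Hom}_{D_{gp}^{b}(A)}(G^{\bullet},X^{\bullet})$ for some $X^{\bullet}\in\mathcal{C}_{gp}(G^{\bullet})$, so it suffices to show ${\rm pd}_{B}{\rm Hom}_{D_{gp}^{b}(A)}(G^{\bullet},X^{\bullet})\le d+1$ for every such $X^{\bullet}$.

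The second step is to make the non-negative integer in Lemma~\ref{c1} explicit. Since ${\rm Gdim}A=d$, each of the (at most two) non-zero cohomologies $H^{0}(X^{\bullet})$ and $H^{-1}(X^{\bullet})$ of $X^{\bullet}\in\mathcal{C}_{gp}(G^{\bullet})$ admits a proper Gorenstein-projective resolution of length $\le d$; splicing these shows that $X^{\bullet}$ is $\mathcal{GP}$-quasi-isomorphic to a bounded complex of Gorenstein-projectives concentrated in degrees $[-d-1,0]$, whence ${\rm Hom}_{D_{gp}^{b}(A)}(X^{\bullet},G^{\bullet}[i])=0$ for $i\ge d+2$ by a degree count (using that morphisms out of a bounded complex of Gorenstein-projectives computed in $D_{gp}^{b}(A)$ coincide with those computed in $K^{b}(A)$). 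Thus the argument of Lemma~\ref{c1} applies with the integer equal to $d$, giving $\mathcal{C}_{gp}(G^{\bullet})\subseteq{\rm GP}\ast{\rm GP}[1]\ast\cdots\ast{\rm GP}[d+1]$. Because $G^{\bullet}$ is a $2$-term Gorenstein silting complex it lies in the heart $\mathcal{C}_{gp}(G^{\bullet})$ of its own t-structure (equivalently ${\rm Hom}_{D_{gp}^{b}(A)}(G^{\bullet},G^{\bullet}[i])=0$ for all $i\ne0$), hence ${\rm GP}={\rm add}G^{\bullet}={\rm GP_{c}}$, and the filtration above is already a ${\rm GP_{c}}$-filtration.

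Finally, for any $X^{\bullet}\in\mathcal{C}_{gp}(G^{\bullet})$ we then have $X^{\bullet}\in\mathcal{C}_{gp}(G^{\bullet})\cap({\rm GP_{c}}\ast{\rm GP_{c}}[1]\ast\cdots\ast{\rm GP_{c}}[d+1])$, so Lemma~\ref{c2} with $m=d+1$ yields ${\rm pd}_{B}{\rm Hom}_{D_{gp}^{b}(A)}(G^{\bullet},X^{\bullet})\le d+1$; taking the supremum over $X^{\bullet}$ gives ${\rm gldim}B\le d+1$. The step I expect to be delicate is the second one: one must check carefully that a Gorenstein-projective (rather than ordinary projective) resolution of $X^{\bullet}$ can be chosen with amplitude controlled by ${\rm Gdim}A$, and that the resulting bounded Gorenstein-projective complex lets one compute ${\rm Hom}_{D_{gp}^{b}(A)}(X^{\bullet},G^{\bullet}[i])$ by purely homotopy-theoretic means so that the vanishing for $i\ge d+2$ is visible; the identification ${\rm GP}={\rm GP_{c}}$ is routine but should not be skipped, since Lemma~\ref{c1} is phrased with ${\rm GP}$ whereas Lemma~\ref{c2} requires ${\rm GP_{c}}$.
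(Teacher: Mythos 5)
Your overall route is the same as the paper's: reduce to objects of the heart via the equivalence of Proposition~\ref{heart}(3), produce from Lemma~\ref{c1} a filtration of each such object by the shifts ${\rm GP},{\rm GP}[1],\dots,{\rm GP}[d+1]$, and feed it into Lemma~\ref{c2}. Your first two steps are sound, and the second one even supplies a detail the paper leaves vague, namely that the unspecified integer in Lemma~\ref{c1} can be taken to be ${\rm Gdim}A=d$.

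The genuine gap is the third step, the identification ${\rm GP}={\rm GP_{c}}$. You justify it by asserting that a $2$-term Gorenstein silting complex lies in the heart of its own t-structure, ``equivalently ${\rm Hom}_{D_{gp}^{b}(A)}(G^{\bullet},G^{\bullet}[i])=0$ for all $i\neq 0$''. The equivalence is correct, but the assertion is not: the silting condition gives the vanishing only for $i>0$, while for $i=-1$ Lemmas~\ref{exactseq} and~\ref{functorialisomorphism} give
$${\rm Hom}_{D_{gp}^{b}(A)}(G^{\bullet},G^{\bullet}[-1])\cong {\rm Hom}_{A}\bigl(H^{0}(G^{\bullet}),H^{-1}(G^{\bullet})\bigr),$$
which need not vanish. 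Already in the classical special case $A=kQ$ with $Q\colon 1\to 2$ (hereditary, so ${\rm Gproj}A={\rm proj}A$ and $D_{gp}^{b}(A)=D^{b}(A)$), the complex $G^{\bullet}=(P_{1}\xrightarrow{0}P_{2})=P_{1}[1]\oplus P_{2}$ is $2$-term silting, yet ${\rm Hom}_{A}(H^{0}(G^{\bullet}),H^{-1}(G^{\bullet}))={\rm Hom}_{A}(P_{2},P_{1})\neq 0$; here $P_{1}[1]\in {\rm GP}\setminus {\rm GP_{c}}$. So the filtration produced by Lemma~\ref{c1} is a ${\rm GP}$-filtration but not a ${\rm GP_{c}}$-filtration, and Lemma~\ref{c2} cannot be invoked as you do. You should know that the printed proof has the identical pinch point: it opens with ``If $G^{\bullet}$ is Gorenstein tilting'', a hypothesis absent from the statement of the theorem, under which $H^{-1}(G^{\bullet})=0$, hence $G^{\bullet}\cong H^{0}(G^{\bullet})\in\mathcal{T}(G^{\bullet})\subseteq\mathcal{C}_{gp}(G^{\bullet})$ and ${\rm GP}={\rm GP_{c}}$ does hold. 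Thus your argument, like the paper's, establishes the bound only for Gorenstein tilting $G^{\bullet}$; in the general silting case the summands of $G^{\bullet}$ lying outside the heart must be handled separately (in the classical setting this is precisely where Buan--Zhou \cite{BZ2} pay an extra $+1$ in their bound), and neither your proposal nor the paper's proof does this.
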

\begin{proof}\ If $G^{\bullet}$ is Gorenstein tilting, then $G^{\bullet}\in \mathcal{C}_{gp}(G^{\bullet})$. Therefore we have that ${\rm GP}={\rm GP_{c}}$. It follows from Lemma~\ref{c1} and \ref{c2} that ${\rm gldim} B\leq {\rm Gdim}A+1$.
\end{proof}

\vskip 20pt

{\bf Declarations}

\vskip 5pt

{\bf Conflict of interest} On behalf of all authors, the corresponding author states that there is no conflict of interest.

\vskip 20pt

\end{document}